\def\namedlabel#1#2{\begingroup
 #2%
 \def\@currentlabel{#2}%
 \phantomsection\label{#1}\endgroup
}
\theoremstyle{plain}
\newtheorem*{theorem*}{Theorem}
\newtheorem*{thmex*}{Theorem~\ref{example}}
\newtheorem*{thmasymp*}{Theorem~\ref{thmAsymp}}
\newtheorem{theorem}{Theorem}[section]
\newtheorem{claim}[theorem]{Claim}
\newtheorem{lemma}[theorem]{Lemma}
\newtheorem{remark}[theorem]{Remark}
\newtheorem{example}[theorem]{Example}
\newtheorem*{eg*}{Example}
\theoremstyle{definition}
\newtheorem{definition}[theorem]{Definition}
\newcommand{\es}{\emptyset}
\newcommand{\R}{\mathbb{R}}
\newcommand{\sn}[1]{{\mathbb{S}^{#1}}}
\newcommand{\hn}[1]{{\mathbb{H}^{#1}}}
\newcommand{\G}{\Gamma}
\newcommand{\ben}{\begin{enumerate}}
\newcommand{\een}{\end{enumerate}}
\newcommand{\wt}{\widetilde}
\newcommand{\g}{\gamma}
\newcommand{\be}{\beta}
\renewcommand{\a}{\alpha}
\newcommand{\cB}{{\mathcal B}}
\newcommand{\rth}{\R^3}
\newcommand{\D}{\Delta}
\newcommand{\cQ}{{\mathcal Q}}
\newcommand{\cS}{{\mathcal S}}
\newcommand{\abs}[1]{\vert #1 \vert}
\newcommand{\ed}{\end{document}}
\renewcommand{\S}{\Sigma}
\newcommand{\ol}{\overline}
\newcommand{\wh}{\widehat}
\definecolor{rrr}{rgb}{.9,0,.1}
\definecolor{rr}{rgb}{.8,0,.3}
\definecolor{pp}{rgb}{.5,0,.7}
\begin{document}

\title{Modifications Preserving Hyperbolicity of Link Complements}

\author{Colin Adams
\and William H. Meeks III
\and \'Alvaro K. Ramos \thanks{The second and third
authors were partially supported
by CNPq - Brazil, grant no. 400966/2014-0.}.}

\maketitle

\begin{abstract}
Given a link in a 3-manifold such that the complement
is hyperbolic, we provide two modifications to the link,
called the chain move and the switch move, that preserve
hyperbolicity of the complement, with only a relatively
small number of manifold-link pair exceptions, which are
also classified. These
modifications  provide a substantial increase in the number of known
hyperbolic links in the 3-sphere and other 3-manifolds.\end{abstract}

\section{Introduction.}

Thurston proved that every knot in the 3-sphere $S^3$ is either a torus knot,
a satellite knot or a hyperbolic knot, by which we mean that its complement
in $S^3$  admits a
complete hyperbolic metric.
By the Mostow-Prasad Rigidity Theorem, the complement of a  hyperbolic knot in $S^3$ has
a unique hyperbolic metric, which must have finite volume; hence, a
hyperbolic knot in $S^3$ has associated to it a
well-defined set of hyperbolic invariants such as volume, cusp volume,
cusp shape, etcetera.
More generally, Thurston proved that a link in a closed,
orientable 3-manifold
has hyperbolic complement (necessarily of finite volume) if and only if
the exterior of the link contains no properly embedded essential
disks, spheres, tori or annuli, terms that are described in Definition~\ref{Def:2.1}.

One would like to be able to identify link complements that
satisfy Thurston's criteria, and that therefore possess
a hyperbolic metric. In~\cite{mena}, Menasco proved that every
non-2-braid prime alternating link in $S^3$ is hyperbolic.
In~\cite{A1}, Adams extended this result to augmented alternating links,
where additional non-parallel trivial components
wrapping around two adjacent strands in the alternating projection
were added to the link.
These additional components bound twice-punctured disks, which
are totally geodesic in the hyperbolic structure of
the complement. By~\cite{ad1}, the link complement can be cut
open along such a twice-punctured disk, twisted a
half-twist and re-glued to obtain another hyperbolic link
complement, with identical volume. This operation adds one crossing
to the link projection. In many hyperbolic link complements,
twice-punctured disks are particularly useful,
because they are totally geodesic; see for instance~\cite{pur1}
and the references therein.

We consider two moves that one can perform on a link in a
3-manifold with hyperbolic complement. The first move we
consider is called the {\it chain move}. Here, we start
with a trivial component bounding a twice-punctured disk
in a ball $\cB$ as in Figure~\ref{chainlemma1}, and we replace the
tangle on the left with the tangle on the right
in Figure~\ref{chainlemma1}, where $k$ is any integer. Assuming that
the rest of the manifold outside $\cB$ is not
the complement of a rational tangle in a 3-ball (see Chapter~2
of~\cite{Adbook} for this definition), the result is hyperbolic.

\begin{figure}[htpb]
\begin{center}
\includegraphics[width=0.9\textwidth]{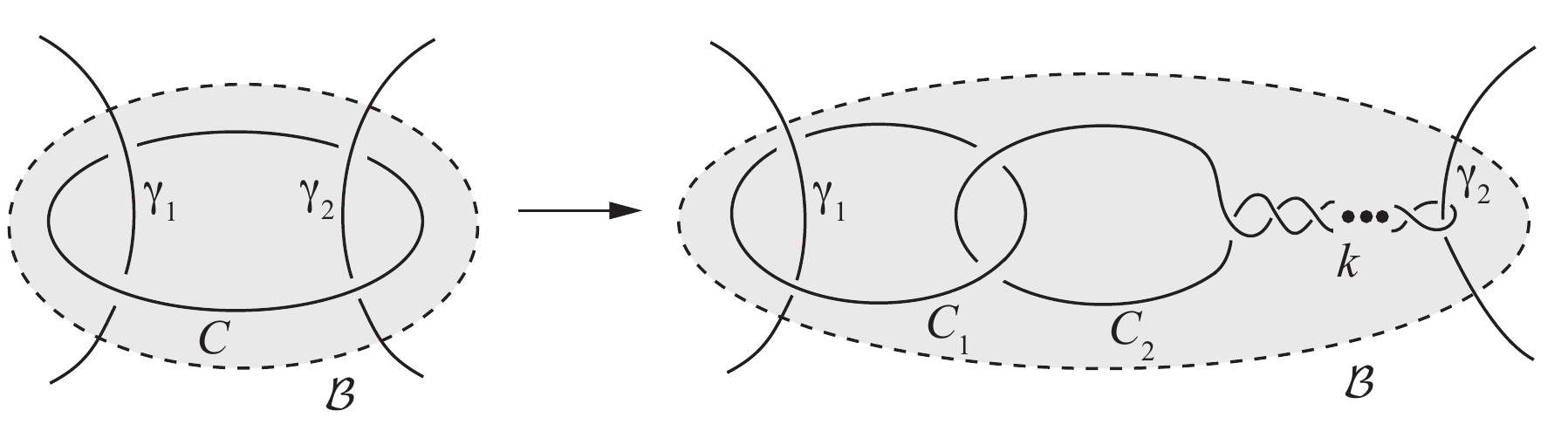}
\caption{Replacing (a) with
(b) preserves hyperbolicity of the complement.}
\label{chainlemma1}
\end{center}
\end{figure}

There are counterexamples to extending the result to the case where the
manifold outside $\cB$ is a
rational tangle complement in a 3-ball as demonstrated by the
hyperbolic link in the 3-sphere appearing
in Figure~\ref{chainmovecounterexample}. When the chain move is applied
with $k = 3$, the
resultant 3-component link is $6^3_3$ in Alexander-Rolfsen notation,
which is not hyperbolic.
However, in Lemma~\ref{rational}, we delineate explicitly the
only possible exceptions.

\begin{figure}[htpb]
\begin{center}
\includegraphics[width=1.0\textwidth]{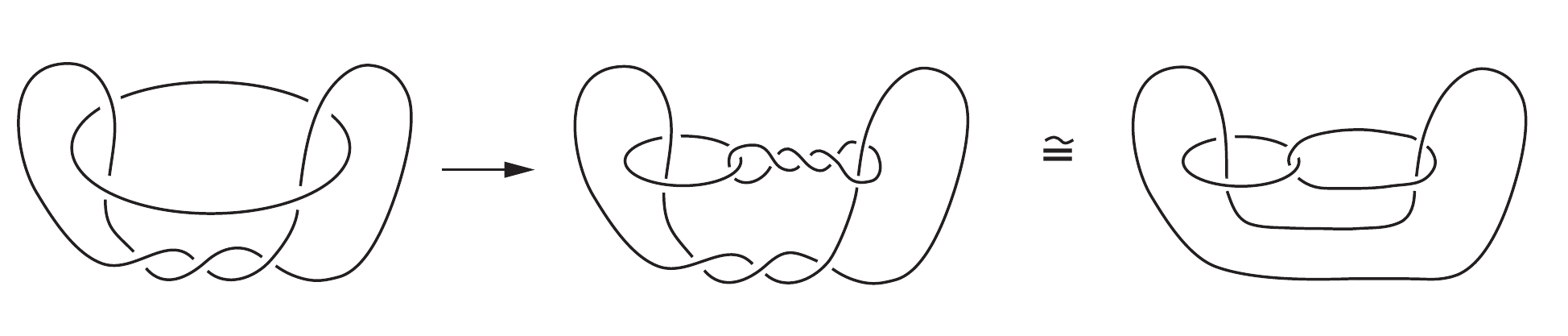}
\caption{Applying the chain move to this hyperbolic link with $k = 3$
yields the non-hyperbolic link complement $6^3_3$.}
\label{chainmovecounterexample}
\end{center}
\end{figure}

The second move is called the {\it switch move}. Suppose we have a
3-manifold $M$ and a link $L$ in $M$
with hyperbolic complement. Let $\alpha$ be an embedded arc that runs
from $L$ to $L$ with interior that
is isotopic to an embedded geodesic in the complement, as in
Figure~\ref{switchbefore}.

\begin{figure}
\centering
\includegraphics[width=0.4\textwidth]{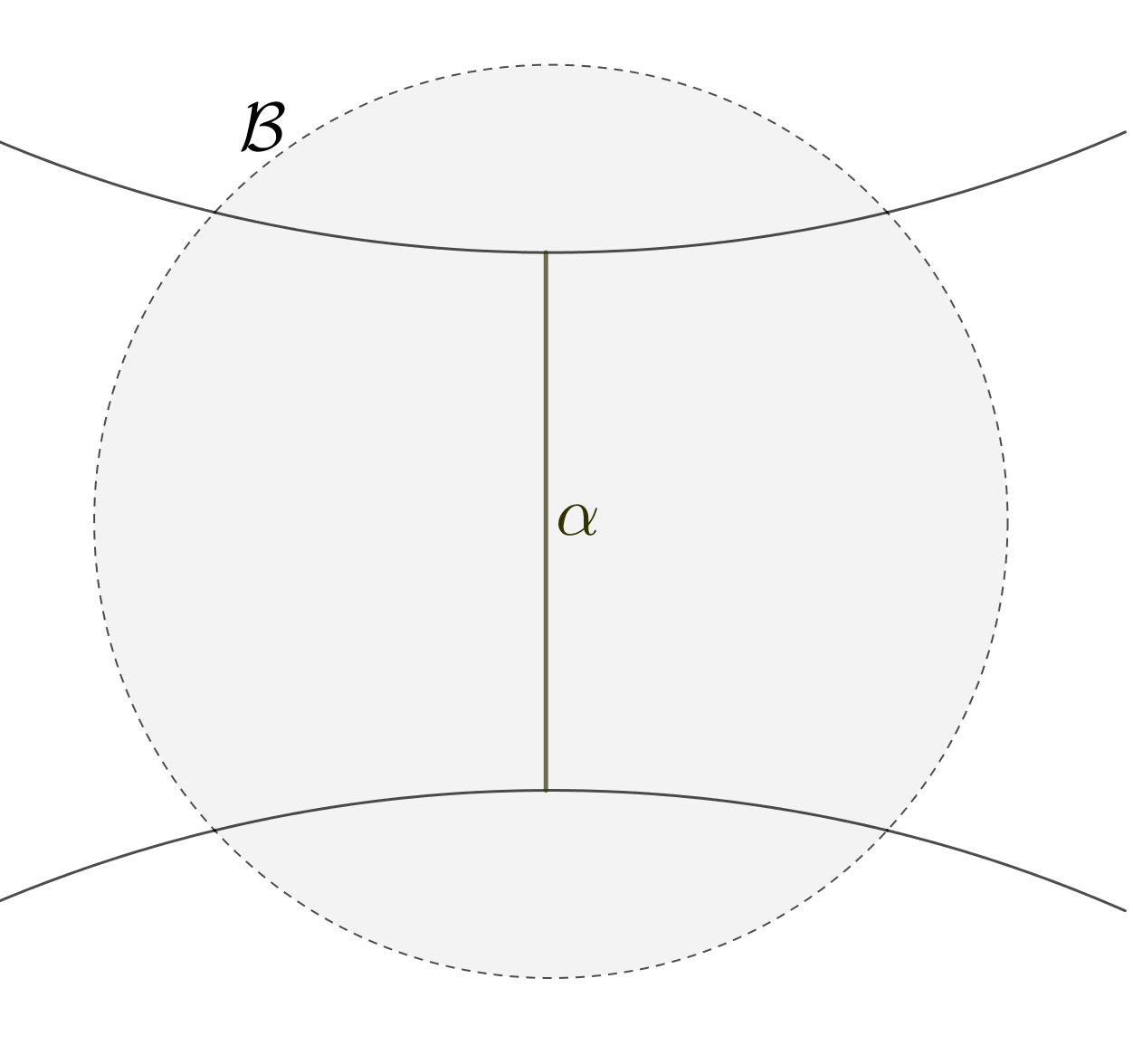}
\caption{The trace of a geodesic $\alpha$ of $(M\setminus L, h)$
connects one or two components of $L$ to one another, and a neighborhood
$\cB$ of $\a$ intersects $L$ in two arcs.\label{switchbefore}}
\end{figure}

Such a geodesic always exists since we could take one with minimal
length outside fixed cusp boundaries.
We consider the possibility that the arc runs from one component of $L$
back to the same component or
from one component to a second component. Let $\cB$ be a neighborhood
of $\alpha$. Then $\cB$ intersects
$L$ in two arcs, as in Figure~\ref{figaugmented}~(a). The switch move
allows us to surger the link and
add in a trivial component as in Figure~\ref{figaugmented}~(b) while
preserving hyperbolicity.

\noindent
\begin{figure}
\noindent \quad
\quad\begin{minipage}{0.42\textwidth}
\begin{center}
\includegraphics[width=0.99\textwidth]{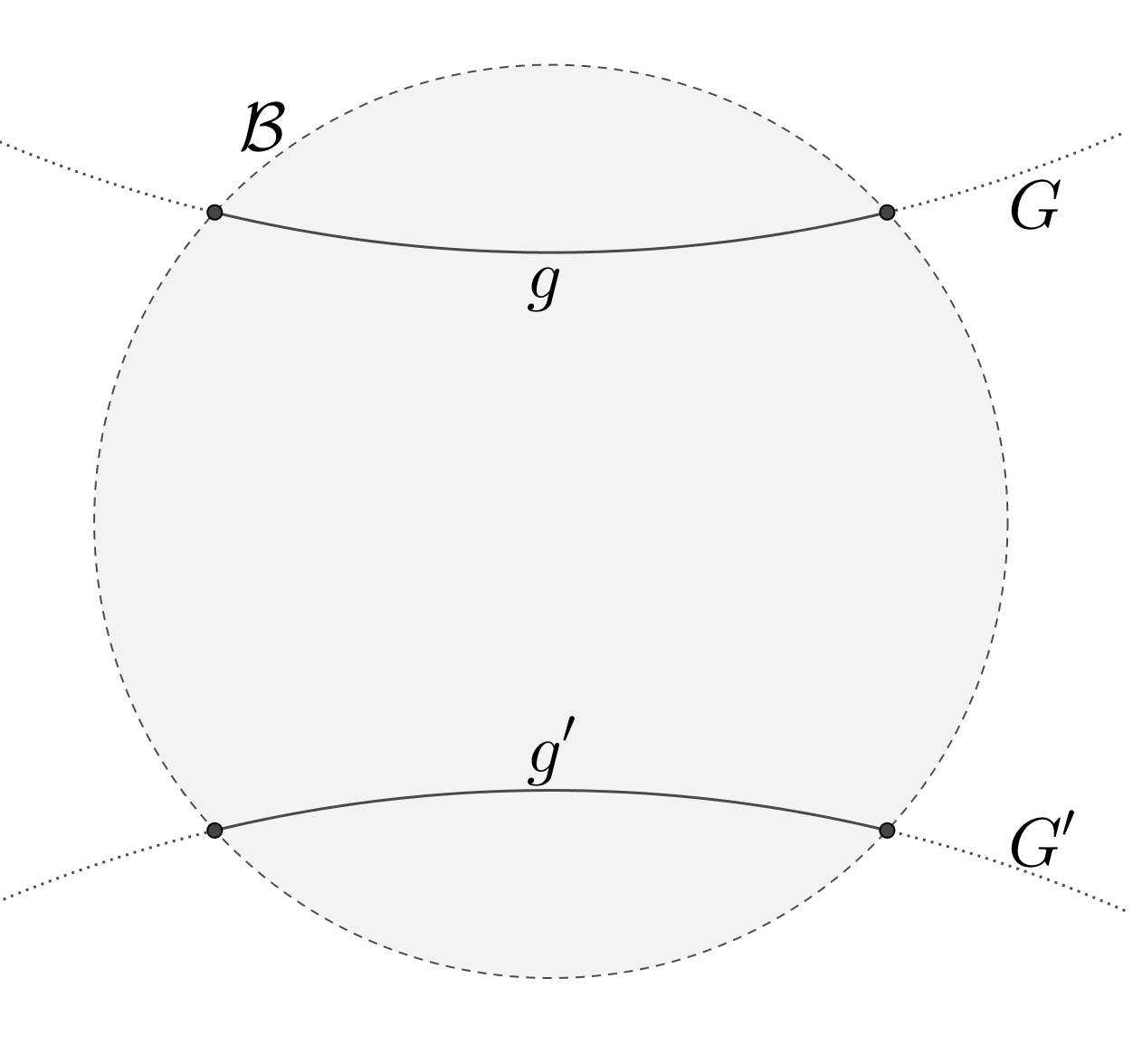}

(a)
\end{center}
\end{minipage}\hfill
$\longrightarrow$\hfill
\begin{minipage}{0.42\textwidth}
\begin{center}
\includegraphics[width=0.99\textwidth]{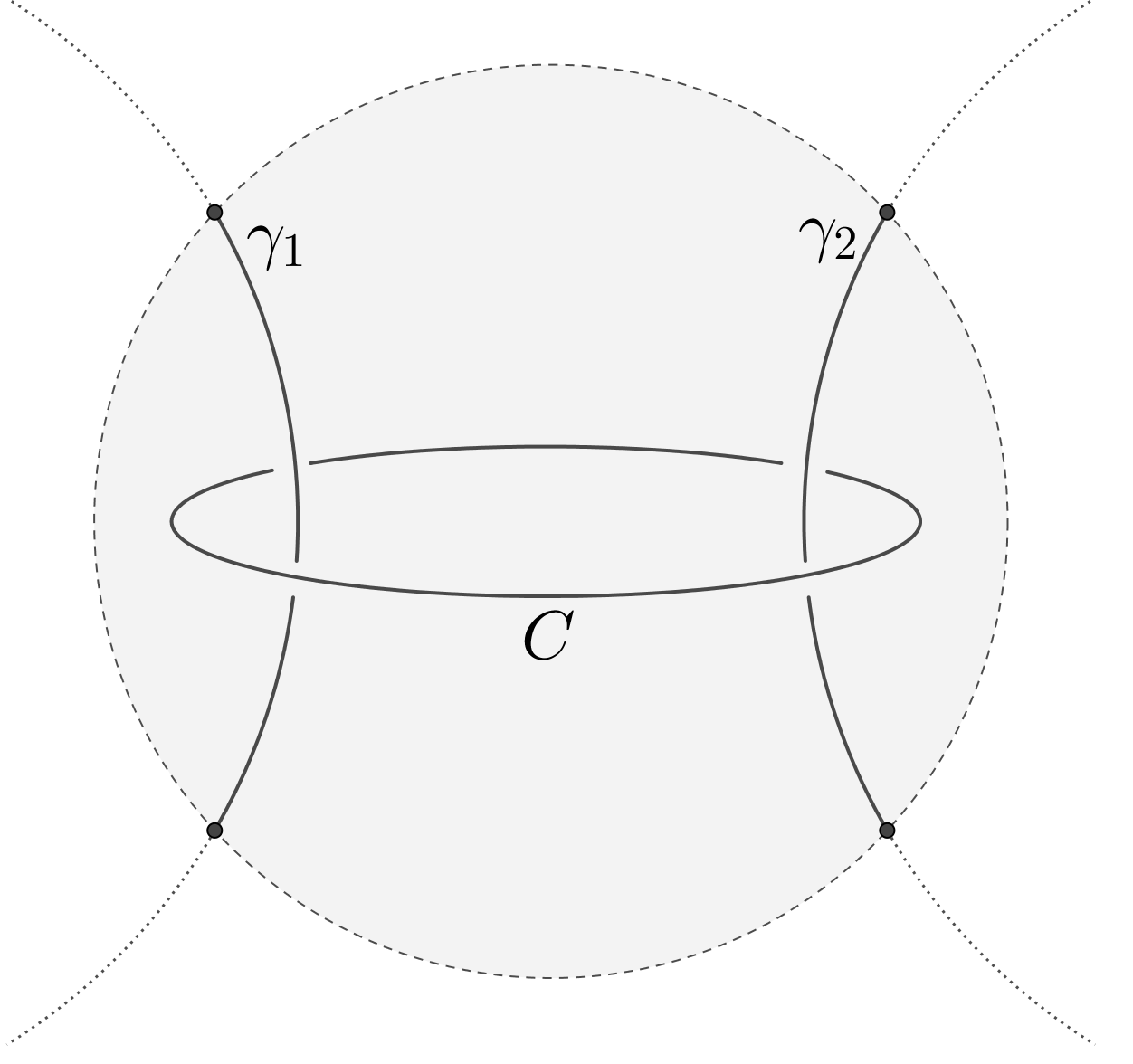}

(b)
\end{center}
\end{minipage}
\quad \quad
\caption{The switch move replaces the
arcs $g$ and $g'$ by the tangle $\g_1\cup~\g_2\cup C$.}
\label{figaugmented}
\end{figure}

\begin{remark}\label{notwelldefined}{\em
Note that the projection depicted in Figure~\ref{switchbefore}
is not well-defined,
since if the two arcs are skew inside the ball, there are two different
projections, depending on point of view.
So in fact, for each such geodesic $\alpha$, there are two switch moves
possible. This is equivalent to
cutting along the twice-punctured disk $D$ bounded by $C$ and twisting
a half-twist in either direction
before regluing. Once we prove that the switch move depicted in
Figure~\ref{figaugmented} preserves hyperbolicity,
the hyperbolicity of the half-twisted version follows immediately from
the previously mentioned results of~\cite{ad1}.
Further twists give link complements homeomorphic to the original or the half-twisted version.} 
\end{remark}

These moves show that many additional link complements in 3-manifolds
are hyperbolic.
In~\cite{amr1}, the chain move and the switch move, together with the
related {\it switch move gluing operation} described in Section~\ref{secGlue},
are utilized in the proof that, for any given
surface $S$ of finite topology
and negative Euler characteristic and any $H\in[0,1)$, there exists a
proper, totally umbilic embedding
of $S$ into some hyperbolic 3-manifold of finite volume with
image surface having constant mean curvature $H$.

In~\cite{SMALL2018}, the chain move is used in the proof that a virtual
link obtained by taking a reduced classical prime alternating
link projection and changing one crossing
to be virtual yields a non-classical virtual link.
See that paper for details.

We can also use the chain move and the switch move to obtain
straightforward proofs of hyperbolicity of well-known classes of links.

\begin{example}{\em
We can show that every chain link of five or more components, no matter
how twisted, is hyperbolic.
This was first proved in~\cite{NR} using explicit hyperbolic structures
for manifolds covered by these link complements.

Start with the alternating 4-chain, known to be hyperbolic by Menasco's
work in~\cite{mena}.
Then apply the chain move repeatedly. This proves hyperbolicity of any
chain link of five
or more components with an arbitrary amount of twisting in the chain.}
\end{example}

We note that the chain and switch moves apply more broadly than is apparent from
Figures~\ref{chainlemma1} and \ref{figaugmented}.
In the case of the chain move, instead of specifying a hyperbolic link
complement $M \setminus L$, we can start with a cusped hyperbolic 3-manifold $M'$
containing a two-sided essential embedded thrice-punctured sphere $S$.
Treating two of the boundary curves on the cusps as the meridianal punctures
of the disk in Figure~\ref{figaugmented} and the third as the longitudinal
boundary of the disk, we can apply the chain move, removing the cusp that contains
the longitude by doing a Dehn filling along a curve that crosses the longitude
once and adding in the additional two components within a neighborhood of $S$.
In the case that two of the boundaries of $S$ are on the same cusp, they must
play the role of the meridianal punctures. (Note that a two-sided thrice
punctured sphere cannot have all three boundaries on the same cusp.)

In the case of the switch move, we can again begin with a cusped
hyperbolic 3-manifold $M'$. For two cusps connected by an embedded geodesic,
we can choose a nontrivial simple closed curve on each torus corresponding to
each cusp. Then by Dehn filling along those curves we obtain a 3-manifold $M$
for which $M'$ is a link complement and the switch move applies.

 The same procedure holds for a geodesic from a cusp back to the same cusp, and a
 specification of a nontrivial simple closed curve on the torus corresponding
 to the cusp, two copies of which play the role of the meridians around $\g_1$
 and $\g_2$. Note that when applied to a link complement $M \setminus L$,
 but with a choice of curve other than meridians, the end result is not a new
 link complement in the same manifold.

 Finally, we point out that there is a variant of the chain move called
 the {\it augmented chain move} as in Figure~\ref{augmentedchainmove} wherein
 the two new components of the chain move are added in but the previous trivial
 component is not removed. We prove here that this move also preserves hyperbolicity.

\begin{figure}[htpb]
\begin{center}
\includegraphics[width=.8\textwidth]{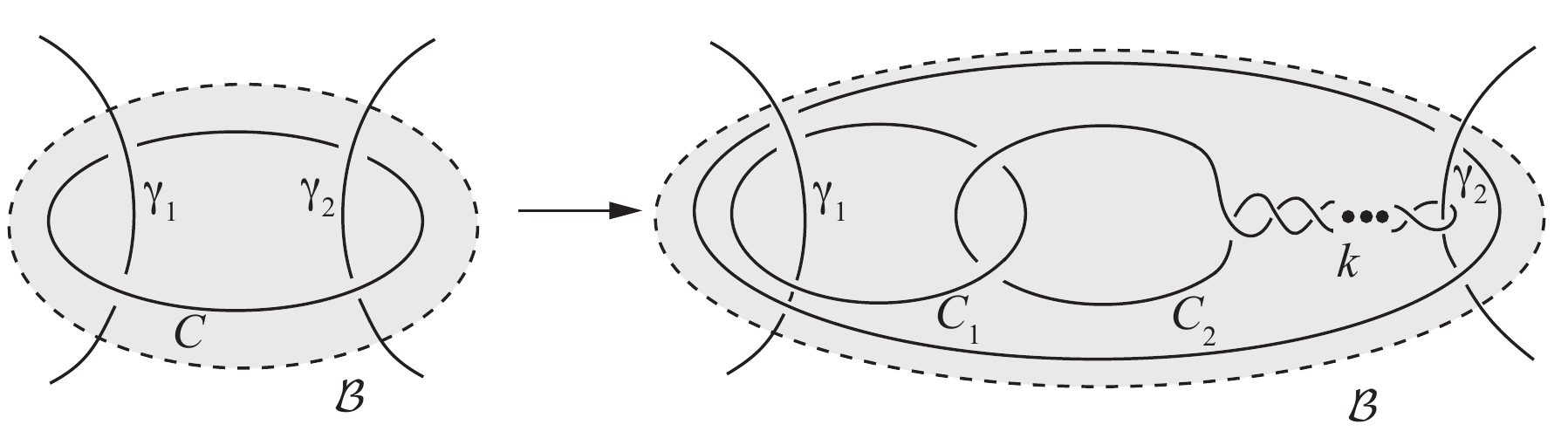}
\caption{The augmented chain move.}
\label{augmentedchainmove}
\end{center}
\end{figure}

To see this, we consider the link appearing in Figure~\ref{5chain}, which is a twisted five-chain.
\begin{figure}[htpb]
\begin{center}
\includegraphics[width=.4\textwidth]{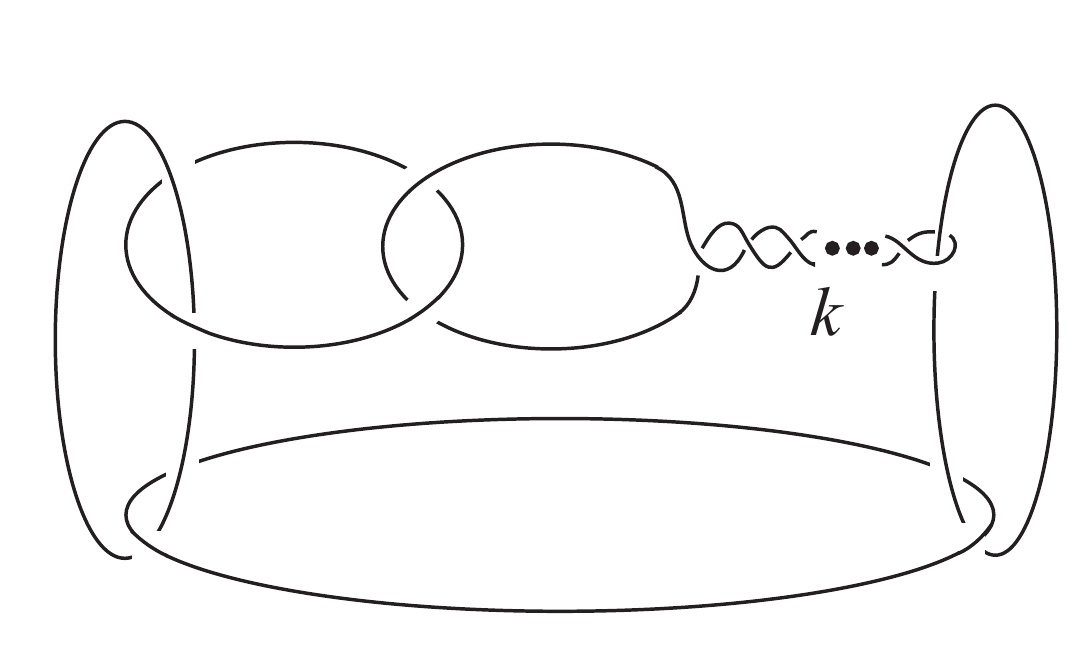}
\caption{All 5-chains are hyperbolic.}
\label{5chain}
\end{center}
\end{figure}

All five-chains are hyperbolic, as we just proved, so it has a hyperbolic
complement. Now we apply the idea of a walnut as in~\cite{Adams7}. We can
cut the manifold $M$ open along the twice-punctured disk bounded by $C$,
cut the 5-chain link complement open along the twice punctured disk $E$,
and then glue copies of the twice-punctured disks to one another to insert
the cut-open link complement into $M$. As in~\cite{ad1}, since a twice-punctured
disk is totally geodesic with a rigid unique hyperbolic structure, the gluings
are isometries and the resulting manifold is hyperbolic with volume the sum of
the volumes of the two manifolds.

Next, we explain the organization of the paper.
First, we remark that it suffices to demonstrate our results when the ambient
manifold is orientable. This property is proved by
showing that the oriented cover of a related non-orientable
link complement admits a hyperbolic
metric and then one applies the  Mostow-Prasad Rigidity Theorem to conclude
that the associated order-two covering transformation
is an isometry, which in turn  implies that the hyperbolic metric
on the oriented covering descends.
In Section~\ref{secprelim}, we present some of the background material
necessary to the proofs of our main results in the orientable setting.
In Section~\ref{secChain}, we prove the Chain Move Theorem, stated
there as Theorem~\ref{chain}. In Section~\ref{secSwitch},
we prove the Switch Move Theorem, see Theorem~\ref{switch}.
In Section~\ref{secGlue} we prove the Switch Move Gluing Operation,
which constructs from a pair of hyperbolic 3-manifolds of finite
volume, an infinite number of new hyperbolic 3-manifolds of finite
volume, see Theorem~\ref{thmoperation}.
In the appendix at the end of the paper,
we classify the exceptional links for which the chain move
fails to produce hyperbolic 3-manifolds of finite volume.

\section{Preliminaries.}\label{secprelim}
In this section, we recall some definitions and results that
are needed to understand hyperbolic 3-manifolds of finite volume
and certain embedded surfaces in such ambient spaces.
Our first goal is to understand the statement of
Thurston's hyperbolization theorem
in our setting. Before stating this result, we first explain some
of the definitions and notations we use.
Throughout this discussion, $P$ will denote
a connected, orientable, compact 3-manifold with
nonempty boundary $\partial P$
consisting of tori and such that $P$ is not the product
of a two-torus with an interval, and ${\rm int}(P)$ will denote the
interior of $P$.
Moreover, a {\em surface $\S$ in $P$} means
a properly embedded surface $\S\subset P$, i.e., $\S$ is embedded
in $P$ with $\partial \S = \S \cap \partial P$.

\begin{definition} \label{Def:2.1} \
\begin{enumerate}[1.]
\item Given a surface $\S$ in $P$, a {\em compression disk for
$\S$} is a disk $E\subset P$ with $\partial E = E\cap \S$
such that $\partial E$ is
homotopically nontrivial in $\S$. If $\S$ does not admit
any compression disk, we say $\S$ is {\em incompressible}.
\item Given a surface $\S$ in $P$, a
{\em boundary-compression disk for
$\S$} is a disk $E\subset P$ with
$\partial E = E\cap (\S \cup \partial P)$
such that $\partial E = \alpha \cup \beta$, where $\alpha$ and $\beta$
are arcs intersecting only
in their endpoints such that $\alpha = E \cap \S$ and
$\beta = E \cap \partial P$ and $\alpha$ does
not cut a disk from $\S$. If $\S$ does not admit
any boundary-compression disk, we say $\S$ is
{\em boundary-incompressible}.
\item A torus $T$ in $P$ is {\em boundary parallel}
if $T$ is isotopic to a boundary component of $P$.
\item An annulus $A$ in $P$ is {\em boundary parallel}
if $A$ is isotopic, relative to $\partial A$,
to an annulus $A'\subset \partial P$.
\item A sphere $S$ in $P$ is {\em essential} if $S$ does not
bound a ball in $P$.
\item A disk $E$ in $P$ is {\em essential}
if $\partial E$ is homotopically nontrivial in $\partial P$.
\item A torus $T$ (respectively an annulus $A$) is {\em essential}
in $P$ if $T$ (resp. $A$) is incompressible and not boundary parallel.
\end{enumerate}
\end{definition}

Using the above definitions,
Thurston's hyperbolization theorem
implies that a connected, orientable,
noncompact 3-manifold $N$ admits
a hyperbolic metric of finite volume if and only if
$N$ is diffeomorphic to ${\rm int}(P)$ as above and
there are no essential spheres, disks, tori or annuli properly
embedded in $P$. In this case, we shall say that $N$ is
{\it hyperbolic}. When a link $L$ in a 3-manifold $M$ has
hyperbolic complement, we will say either $M \setminus L$
is hyperbolic, or $L$ is hyperbolic.

A useful fact is that if $\alpha$ is an arc with endpoints in a link
$L$ in a 3-manifold $M$
such that $\alpha$ corresponds to a geodesic in the hyperbolic link
complement $M \setminus L$,
then $\alpha$ cannot be homotoped through $M \setminus L$ into
$L$ while fixing its endpoints on $L$.
This follows from the fact any such geodesic will lift to
geodesics connecting distinct horospheres
in the universal cover $\mathbb{H}^3$, whereas an arc that is
homotopic into $L$ will lift to arcs,
each of which connects one and the same horosphere.

In the case that a manifold $M$ has no essential disks, we say it is
{\em boundary-irreducible}.
In the case that a manifold $M$ has no essential spheres, we say it is
{\em irreducible}. Note that
if $M$ has only toroidal boundaries and it is not a solid torus, which is
the situation we will consider, irreducibility implies
boundary-irreducibility. This is because if there exists an essential
disk $D$ with boundary
in a torus $T\subset \partial M$, then $\partial N(D \cup T)$ is a sphere
which must bound a ball to the non-$D$ side,
implying $M$ is a solid torus. Here and elsewhere, $N(G)$ denotes a
regular neighborhood of a set $G\subset M$.

Given an annulus $A$ properly embedded in an irreducible manifold
$M$ with toroidal boundary,
we note that if $A$ is boundary-compressible, it is boundary-parallel.
This follows because
we can surger the annulus along the boundary-compressing disk to obtain
a properly embedded disk $D$,
with trivial boundary on $\partial M$. Then $\partial D$ bounds a disk
$D'$ in $\partial M$,
and $D \cup D'$ is a sphere bounding a ball. This allows us to isotope
$A$ relative $\partial A$ into $\partial M$.

\section{The Chain Move Theorem}\label{secChain}

Let $L$ be a hyperbolic link in a 3-manifold $M$ and let
$\cB\subset M$ be a ball in $M$ that intersects $L$ as in
Figure~\ref{chainlemma1}~(a). In this section we prove the Chain
Move Theorem, as stated by Theorem~\ref{chain} below.
This proof breaks up into two cases
depending on whether or not the pair
$(M\setminus \cB, L\setminus \cB)$ is a rational tangle in
a 3-ball, see~\cite[Chapter~2]{Adbook} for this definition
and for the representation of a rational tangle
by a sequence of integers. Since the proof of the chain move
in this specific setting uses a specialized knowledge of
classical knot theory, it will be presented in the
\hyperref[rationaltangles]{Appendix} of this paper.

\begin{theorem}[{Chain Move Theorem}]\label{chain}
Let $L$ be a link in a 3-manifold $M$
such that the link complement $M\setminus L$ admits a
complete hyperbolic metric of finite volume.
Suppose that there is a sphere $\cS$ in $M$ bounding a ball $\cB$
that intersects $L$ as in Figure~\ref{chainlemma1}~(a).
Let $L'$ be the resulting link obtained by replacing
$L \cap \cB$ by the components as appear in
Figure~\ref{chainlemma1}~(b). Then if
$(M \setminus \cB, L \setminus (\cB \cap L))$
is not any of the rational tangles $-k$, $-(k+1)$, or $-2 -k$ in a 3-ball,
then $M \setminus L'$ admits a complete hyperbolic metric of
finite volume.
\end{theorem}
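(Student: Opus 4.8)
The plan is to invoke Thurston's hyperbolization criterion as stated in the excerpt: it suffices to show that $P = M \setminus N(L')$ (the compact exterior, with toroidal boundary) contains no essential spheres, disks, tori, or annuli. Since the hypothesis gives us that $M \setminus L$ is hyperbolic, I would build the argument around the twice-punctured disk $D$ bounded by the trivial component $C$ in $\mathcal{B}$. The key structural fact I'd want to exploit is that the new link $L'$ differs from $L$ only inside the ball $\mathcal{B}$, and that the tangle replacement in Figure~\ref{chainlemma1}(b) introduces a totally geodesic twice-punctured disk. Because twice-punctured disks in hyperbolic 3-manifolds are totally geodesic with a rigid structure (as cited from~\cite{ad1} in the excerpt), I would try to reduce hyperbolicity of $M \setminus L'$ to hyperbolicity of a piece I already understand, plus a controlled gluing.

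Concretely, I would argue by contradiction: suppose $P' = M \setminus N(L')$ contains an essential surface $F$ (sphere, disk, torus, or annulus). The strategy is to make $F$ intersect the twice-punctured disk $D'$ in $P'$ transversally and minimally, then run an innermost-disk / outermost-arc exchange argument to remove all intersection curves and arcs. Any intersection circle that is inessential on $D'$ can be removed by irreducibility of the ambient manifold; any essential circle on $D'$ must be parallel to $\partial D'$ or encircle one of the two punctures, and I'd use incompressibility together with the puncture structure to eliminate these. Once $F$ is disjoint from $D'$, it lives in the complement of $D'$, which is identifiable with a piece of the \emph{original} hyperbolic complement $M \setminus L$ (or a Dehn filling / cut-open version of it, per the ``walnut'' gluing picture described in the excerpt). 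The essential surface would then descend to an essential surface in $M \setminus L$, contradicting its hyperbolicity.

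I expect the genuinely hard part to be the \textbf{essential annulus case}, and this is exactly where the rational-tangle exceptions enter. An essential annulus $A$ can have its two boundary circles on the cusps in several configurations, and after cutting along $D'$ an annulus need not simply descend --- it may be assembled from arcs that pair up across copies of $D'$. The delicate point is that when $(M \setminus \mathcal{B}, L \setminus (\mathcal{B} \cap L))$ is a rational tangle, such an annulus genuinely exists (this is what produces the non-hyperbolic examples like $6^3_3$), whereas when the outside is not a rational tangle, I must show no such annulus can be built. I would handle this by a careful analysis of how arcs of $A \cap (P' \setminus D')$ must connect up through the tangle region, using the earlier observation that a geodesic arc between cusps cannot be homotoped into $L$, together with boundary-incompressibility to rule out boundary-parallel annuli. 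Establishing precisely that the only obstructions are the three listed rational tangles $-k$, $-(k+1)$, $-2-k$ requires the specialized classical knot-theoretic argument the excerpt defers to the appendix, so I would split off that rational-tangle subcase and treat the generic case here, where the descent argument closes cleanly.
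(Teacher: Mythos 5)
Your high-level skeleton (Thurston's criterion, argue by contradiction, minimize intersections of an essential surface with distinguished surfaces, descend to the hyperbolic complement $X = M\setminus L$) matches the paper's, but there is a genuine gap: you never identify the surface through which the hypothesis ``not a rational tangle'' actually enters, namely the four-punctured sphere $\cQ = \cS\setminus L$. The paper's pivotal first step (Claim~\ref{Qincomp}) proves that a compression or boundary-compression of $\cQ$ in $X$ or in $Y = M\setminus L'$ forces $(M\setminus\cB,\,L\setminus(\cB\cap L))$ to be a rational tangle in a 3-ball; under the standing hypothesis $\cQ$ is therefore incompressible and boundary-incompressible, and this fact drives every subsequent case. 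In particular your descent step --- ``once $F$ is disjoint from $D'$ it lives in a piece of $M\setminus L$'' --- needs $\cQ$ twice: first to isotope the part of $F$ inside $\cB$ out of the ball (disjointness from the twice-punctured disks alone does not place $F$ outside $\cB$), and second to show the descended surface is still essential in $X$. An annulus disjoint from $\cB$ that is essential in $Y$ may well be compressible or boundary-parallel in $X$, via a compressing disk or product region passing through $\cB$; the paper kills these possibilities by disk replacement along $\cQ$ and a separation argument. Moreover, the replacement tangle contains \emph{two} twice-punctured disks $D_1, D_2$ (bounded by $C_1$ and $C_2$), and their incompressibility and boundary-incompressibility in $Y$ must be proved topologically (Claim~\ref{claimDi}); you cannot justify this by saying they are totally geodesic via~\cite{ad1}, because that applies only once $Y$ is known to be hyperbolic --- exactly what is being proved --- so your framing is circular. (The totally geodesic gluing argument does work for the \emph{augmented} chain move, where $C$ is kept and two known hyperbolic pieces are glued; for the chain move itself, removing $C$ amounts to a Dehn filling, which such a gluing argument cannot control.)

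Second, your annulus case relies on tools not available here. An essential annulus in $Y$ can have both boundary components on the new components $C_1, C_2$; such an annulus cannot ``descend'' to $X$ at all, since those components do not exist in $L$. The paper disposes of this case using the specific geometry of the replacement: $C_1\cup C_2$ is a Hopf link, so the relevant complement is a thickened torus $T\times[0,1]$, and linking with $\g_1,\g_2$ produces the contradiction. Your proposed substitute --- ``a geodesic arc between cusps cannot be homotoped into $L$'' --- is a hypothesis-dependent tool from the Switch Move Theorem; the Chain Move Theorem has no geodesic arc in its hypotheses, so that observation is unusable. A minor but symptomatic point: it is not true that an essential annulus ``genuinely exists'' whenever the exterior is rational --- the appendix shows that all rational exteriors except $\infty$, $-k$, $-(k+1)$, $-2-k$ still yield hyperbolic complements.
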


In Figure~\ref{excludedlinks}(a), we see the new link components that are
inserted into the ball $\cB$. In Figures~\ref{excludedlinks}(b), (c) and (d),
we see, for any fixed integer $k$, the three cases of rational tangles in the
exterior 3-ball that do not yield a hyperbolic link complement.

\begin{figure}[htpb]
\begin{center}
\includegraphics[width=.8\textwidth]{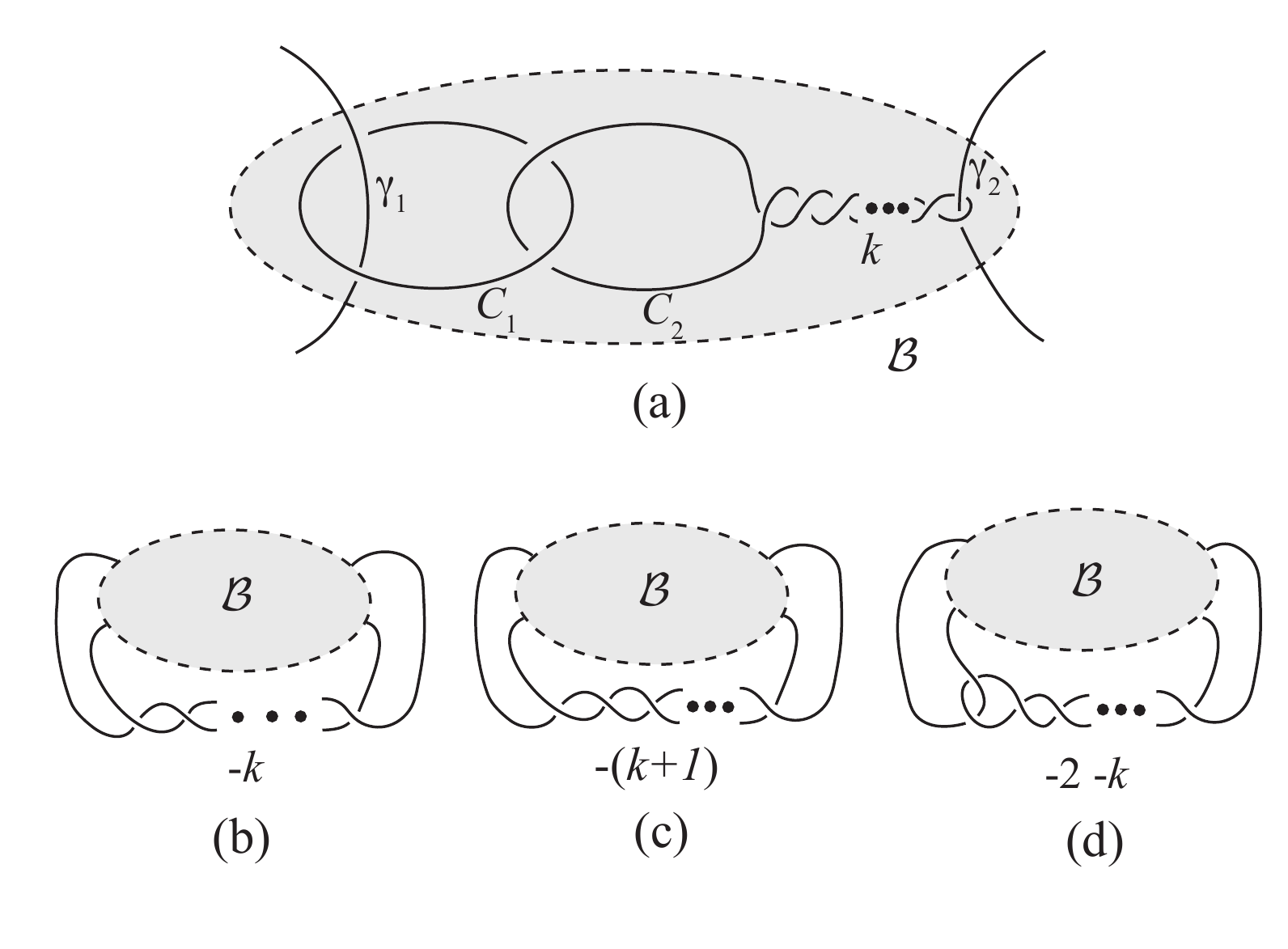}
\caption{The link components we are inserting in $\cB$ and the three rational
tangles in an exterior ball that do not generate a hyperbolic link complement.}
\label{excludedlinks}
\end{center}
\end{figure}

\begin{remark}{\em
The crossings around the single trivial component need
not be non-alternating for Theorem~\ref{chain} to apply.
If the crossings
alternate (as shown in Figure~\ref{reidemeister}~(a)),
we could add a crossing to $\gamma_2$ and
work in a sub-ball as in Figure~\ref{reidemeister}~(b)
so that the crossings are those shown in
Figure~\ref{chainlemma1}(a).

\noindent
\begin{figure}[ht]
\noindent \quad
\quad\begin{minipage}{0.38\textwidth}
\begin{center}
\includegraphics[width=0.99\textwidth]{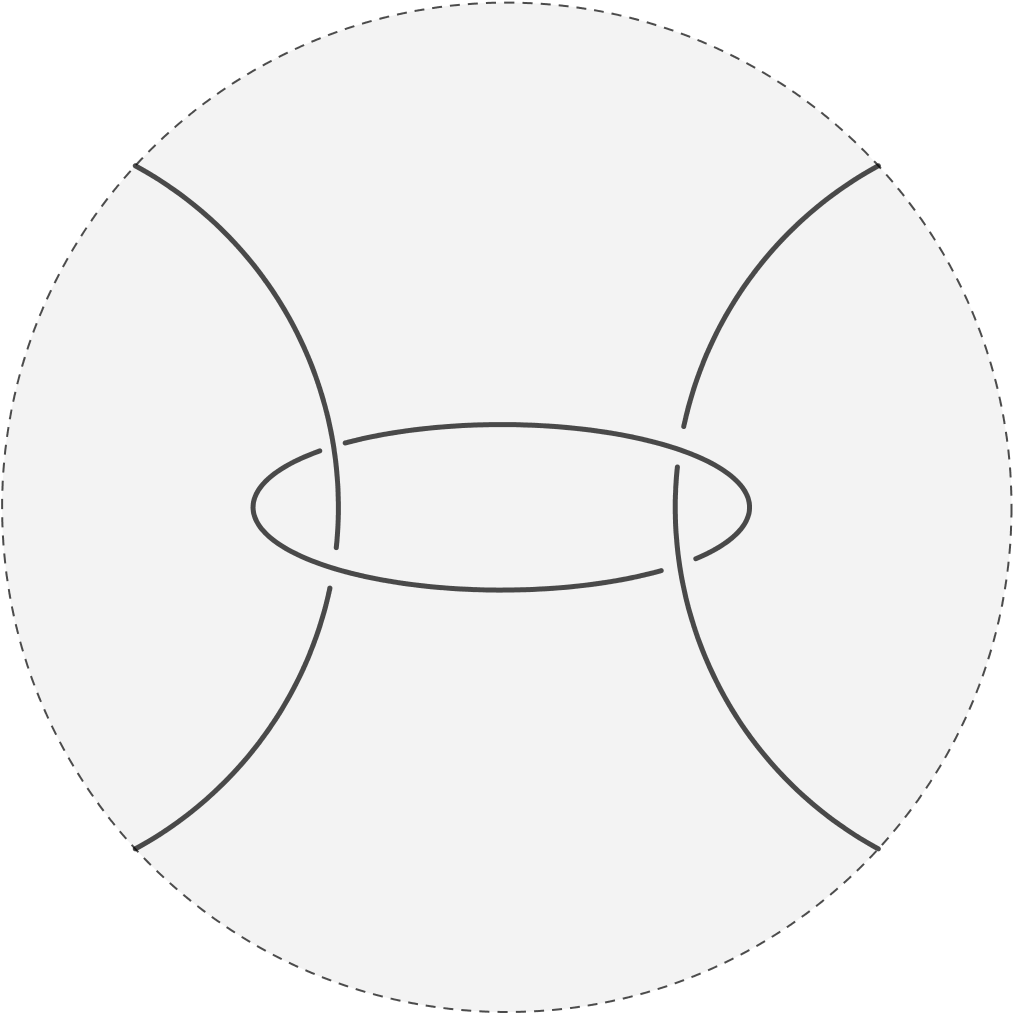}

(a)
\end{center}
\end{minipage}\hfill
$\longrightarrow$\hfill
\begin{minipage}{0.38\textwidth}
\begin{center}
\includegraphics[width=0.99\textwidth]{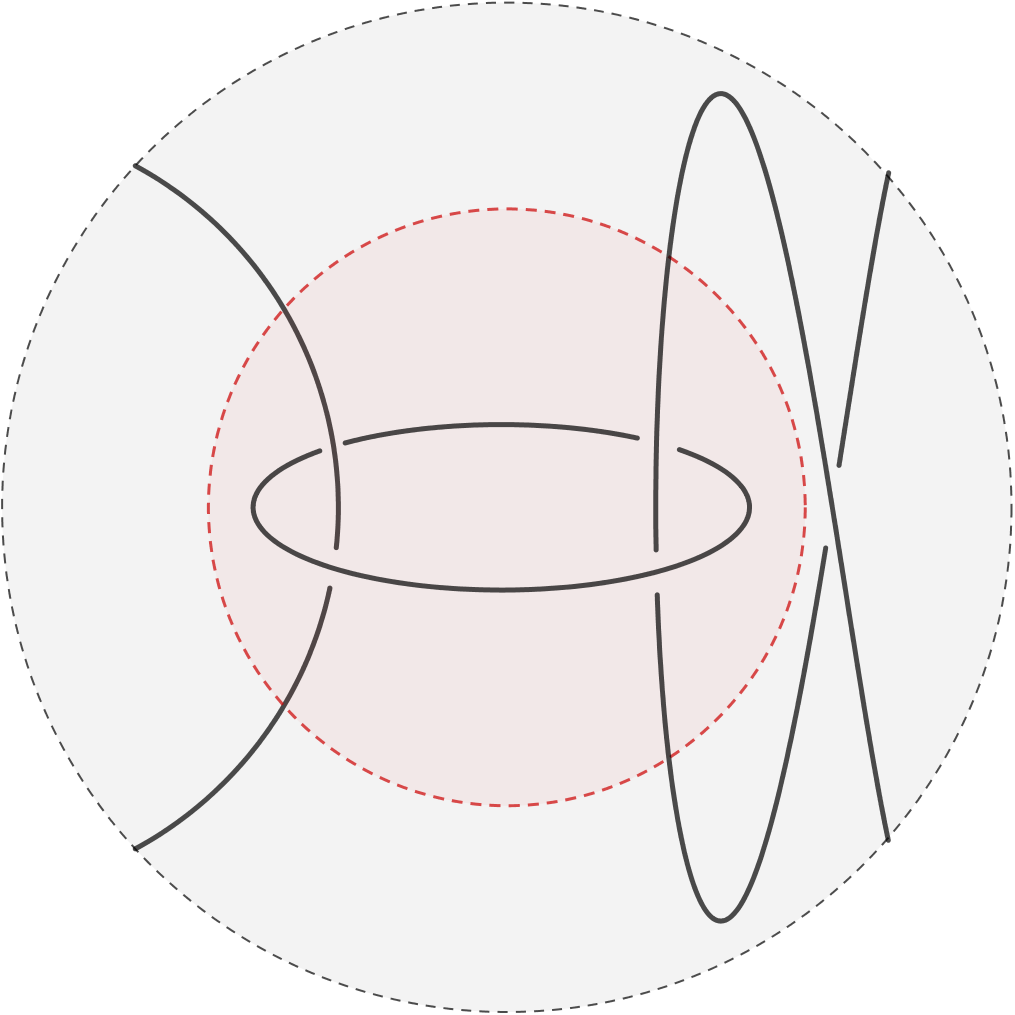}

(b)
\end{center}
\end{minipage}
\quad \quad
\caption{Using a Type I Reidemeister move to obtain a sub-ball
where the Chain Move Theorem applies.
\label{reidemeister}}
\end{figure}
}
\end{remark}

\begin{remark}{\em
Note that repeated application of the Chain Move Theorem allows us to create a
hyperbolic link complement
with an arbitrarily long chain of trivial components and with any
amount of twist.
Moreover, if the original exterior tangle is assumed not to be
rational, the subsequent exterior tangles
to which we apply the move cannot be rational either, so all resulting
link complements are hyperbolic.
In fact, even if the initial exterior tangle is rational, if our first
application of the move results
in a hyperbolic link complement, all repeated applications will also be hyperbolic.}
\end{remark}

\begin{proof}[Proof of Theorem~\ref{chain}]
Let $X = M\setminus L$ and, for $i=1,2$,
let $\G_i$ be the connected component of
$L$ containing the arc $\g_i$ (note that possibly $\G_1 = \G_2$).
First, we assume that $M$ is orientable.
We let $L'$ be the link formed by the replacement stated
in Theorem~\ref{chain}. As stated in the introduction
of this section, we will assume that
$(M \setminus \cB, L \setminus (\cB \cap L))$ is not a rational
tangle in a 3-ball; this special case can be described by
the following result, which is proved in the \hyperref[rationaltangles]{Appendix}.

\begin{lemma}\label{rational}
Let $L$ be a link in the 3-sphere such that the tangle $R = L\setminus \cB$
is a rational tangle and the tangle $L\cap \cB$ is the tangle $T_k$ appearing
in Figure~\ref{chainlemma1}(b), for some integer $k$. If $R$ is none of the
rational tangles $\infty$, $-k$, $-(k+1)$ or $-2 -k$, the link complement is hyperbolic.
\end{lemma}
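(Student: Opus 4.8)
The plan is to reduce $L'$ to an augmented alternating link and then invoke the known hyperbolicity of such links, rather than attempting to analyze essential surfaces in $S^3\setminus L'$ directly. Since we work in the $3$-sphere, I would write $L' = R\cup T_k$, where the rational tangle $R$ fills the ball $S^3\setminus\cB$ and $T_k$ is the tangle of Figure~\ref{chainlemma1}(b) inside $\cB$; recall that $T_k$ consists of the two arcs $\g_1,\g_2$ together with a trivial component $C$ bounding a twice-punctured disk $D$ punctured by $\g_1$ and $\g_2$, with the integer $k$ recording the twisting of the two strands inside $\cB$. The two structural facts I would exploit are (i) the totally geodesic twice-punctured disk $D$, which lets me remove the $k$ twists without changing the homeomorphism type of the complement, and (ii) the classical fact that a closure of a rational tangle is a $2$-bridge, hence alternating, link.

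First I would untwist. By the results of~\cite{ad1} recalled in Remark~\ref{notwelldefined}, cutting $S^3\setminus L'$ along $D$ and regluing after a full twist yields a homeomorphic complement; iterating this absorbs the $k$ crossings inside $\cB$ into the exterior tangle. Thus $S^3\setminus L'$ is homeomorphic to $S^3\setminus L_0'$, where $L_0'$ has the untwisted tangle $T_0$ in $\cB$ and an effective exterior rational tangle $R' = R + c$ for an explicit shift $c$ depending on $k$. In $L_0'$ the strands $\g_1,\g_2$ run parallel through $\cB$ and $C$ is a genuine crossing circle encircling two adjacent strands of the $2$-bridge link $K'$ obtained by closing $R'$. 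Consequently $L_0' = K'\cup C$ is an augmented alternating link in the sense of~\cite{A1}.

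Next I would apply the hyperbolicity criterion for augmented alternating links from~\cite{A1}, together with Menasco's theorem~\cite{mena}: $K'\cup C$ is hyperbolic provided the underlying alternating link $K'$ is prime, non-split and not a $(2,n)$-torus link, and provided $C$ does not bound an embedded disk meeting $L_0'$ in at most one point. Since $K'$ is the $2$-bridge link determined by the rational tangle $R'$, each of these conditions translates directly into a condition on $R'$: the closure is split or $C$ compresses exactly when $R' = \infty$, and $K'$ degenerates to a $(2,n)$-torus link (including the unknot and Hopf link) exactly when $R'$ is one of three consecutive integer tangles. Translating back through $R' = R + c$, these degenerate cases correspond precisely to $R\in\{\infty,\,-k,\,-(k+1),\,-2-k\}$, the three integer tangles $-(k),-(k+1),-(k+2)$ accounting for the three consecutive torus-link closures (as illustrated by the non-hyperbolic link $6^3_3$ of Figure~\ref{chainmovecounterexample}). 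For every other rational $R$, the link $K'$ is a prime, non-split, non-$2$-braid alternating link and $C$ is essential, so $K'\cup C$, and hence $S^3\setminus L'$, is hyperbolic in the sense of Definition~\ref{Def:2.1}.

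The main obstacle is the bookkeeping in the last step: I must verify that the untwisting shift $c$ and the continued-fraction arithmetic line up so that the non-hyperbolic closures occur at exactly the claimed tangles $\{\infty,-k,-(k+1),-2-k\}$ and nowhere else. This means checking that the closure of $R'$ is a $2$-bridge torus link for precisely the predicted three consecutive integers, and that at $R'=\infty$ the circle $C$ genuinely bounds a compressing or once-punctured disk, so that the excluded complements are reducible or contain an essential torus or annulus and are therefore not hyperbolic. Confirming that the remaining hypotheses of~\cite{A1}—primality, non-splitness, and that $K'$ is not a $2$-braid—hold uniformly for all other $R$ is then routine from the classification of $2$-bridge links.
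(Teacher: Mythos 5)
Your proposal rests on a misreading of the tangle in the statement. The tangle $T_k$ of Figure~\ref{chainlemma1}(b) is the \emph{post}-chain-move tangle: it consists of the two arcs $\g_1,\g_2$ together with \emph{two} Hopf-linked trivial circles $C_1$ and $C_2$, where $C_1$ bounds a disk punctured once by $\g_1$ and once by $C_2$, and $C_2$ bounds a disk punctured once by $\g_2$ and once by $C_1$. The configuration you describe --- a single circle $C$ bounding a disk punctured by both $\g_1$ and $\g_2$ --- is the \emph{pre}-move tangle of Figure~\ref{chainlemma1}(a). This is fatal to the strategy: the link of Lemma~\ref{rational} is a $2$-bridge link with a two-circle chain threaded through it, and it is \emph{not} an augmented alternating link in the sense of~\cite{A1}, since neither $C_1$ nor $C_2$ encircles two strands of the underlying alternating link (each encircles one strand of the $2$-bridge link and the other circle). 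So the theorem of~\cite{A1} cannot be invoked, and the half-twist untwisting along ``the disk bounded by $C$'' is not even available in the form you describe. The bookkeeping you defer is also not merely routine --- it is wrong in shape: under your reading, the closure of \emph{every} integer tangle is a $(2,n)$-torus link, so your scheme would produce infinitely many integer exceptions rather than the three consecutive ones $-k, -(k+1), -(k+2)$ in the statement. The actual exceptions come from chain-link degenerations (e.g.\ the non-alternating $3$-chain $6^3_3$ of Figure~\ref{chainmovecounterexample}), which the augmented-alternating framework cannot detect.

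For contrast, the paper's proof is a direct verification of Thurston's criterion: after an isotopy moving the $k$ half-twists into the exterior tangle $R$, it rules out essential spheres and disks by linking-number arguments, and then carries out a long case analysis showing that an essential annulus or torus forces the exterior tangle to be one of the four exceptions, by studying $(p,q)$-boundary slopes on $\partial N(C_i)$, the solid tori such annuli cut off, and winding/linking constraints on which components can lie inside them; Menasco's theorem enters only at the very end, for the integer-tangle (closed chain) cases. It is worth noting that your strategy is essentially the one the paper \emph{does} use for the rational-tangle case of the Switch Move Theorem (Theorem~\ref{switch}), where the inserted tangle genuinely contains a single crossing circle encircling two strands; it is the right idea there, but it does not transfer to the chain move.
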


Note that we do not include the rational tangle $\infty$ as a tangle to
exclude in the statement of Theorem~\ref{chain} since, in
the case of this tangle, the original link $L$ is not prime and
hence $X$ is not hyperbolic.
We prove Theorem~\ref{chain} when $M$ is orientable by showing that
the resulting link complement $Y = M\setminus L'$ does not admit
essential disks, spheres, tori or annuli. In order to do so, we first
prove the following.

\begin{claim}\label{Qincomp}
The four-punctured sphere
$\cQ = \cS \setminus L$ is incompressible and boundary-incompressible
in $X$ and also in $Y$.
\end{claim}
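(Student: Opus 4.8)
The plan is to prove both assertions by pushing a hypothetical compressing (or boundary-compressing) disk onto one side of $\cS$ and then analyzing the two sides separately, proving the statement for $X$ first and bootstrapping the exterior analysis to $Y$. Throughout, write $W = (M\setminus \cB)\setminus L$ for the exterior region; since the chain move alters the link only inside $\cB$, the region $W$ and the surface $\cQ$ are literally the same for $X$ and for $Y$. Given a compressing disk $E$ for $\cQ$, the intersection ${\rm int}(E)\cap \cS$ is a union of circles, and an innermost one bounds a subdisk of $E$ meeting $\cS$ only along its boundary, hence a one-sided compressing disk for $\cS$; if that boundary circle is inessential on $\cQ$ we isotope $E$ across it to remove the intersection, and otherwise we have already produced a one-sided disk. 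This reduction is purely topological and so applies verbatim in both $X$ and $Y$, and the analogous innermost-arc argument places any boundary-compressing disk on one side as well. Thus it suffices to rule out one-sided disks inside $\cB$ and inside $W$.

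For $X$ I would use hyperbolicity on both sides. The disk bounded by $C$ is punctured twice by $\g_1$ and $\g_2$, so $\widehat D := D\setminus L$ is a thrice-punctured sphere; being totally geodesic it is incompressible and boundary-incompressible in $X$. A one-sided disk $E\subset \cB$ can be made disjoint from $\widehat D$ by an innermost-curve argument against the incompressible $\widehat D$, after which $\partial E$ bounds in a ball and is therefore inessential on $\cQ$, a contradiction; boundary-compressions inside $\cB$ are excluded the same way using boundary-incompressibility of $\widehat D$. For a one-sided disk $E\subset W$, its essential boundary realizes one of the three pairings of the four punctures, and surgering $\cS$ along $E$ produces two $2$-punctured spheres, i.e.\ annuli in $X$; since $X$ is hyperbolic these are inessential, so each bounds a ball meeting $L$ in a single unknotted arc. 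I would then check that no pairing is compatible with this: the pairing separating $\g_1$ from $\g_2$ is obstructed by the clasp of $C$ (equivalently by $\widehat D$), while the remaining two pairings would make $L$ split or non-prime, contradicting hyperbolicity of $X$. A boundary-compression on the exterior side is ruled out because, after tubing along the compressing arc, it yields an essential annulus in $X$.

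Finally, for $Y$ I would again place the disk on one side. If it lies in $W$, then because $W$ and $\cQ$ are common to both manifolds it is simultaneously a compressing (resp.\ boundary-compressing) disk for $\cQ$ in $X$, contradicting the case just settled; hence no exterior disk exists. It remains to treat a disk inside $\cB$, where $L'$ is the explicit chain tangle of Figure~\ref{chainlemma1}(b). Here hyperbolicity of $Y$ is not yet available, so I would argue directly: the new trivial components each bound twice-punctured disks inside $\cB$, and an innermost-intersection argument of $E$ against these disks confines $\partial E$ to a ball and forces it to be inessential, with the analogous argument handling interior boundary-compressions. The main obstacle is precisely this hyperbolicity-free interior computation for $Y$, together with the pairing check in the exterior case for $X$; the key simplification is that incompressibility from the exterior is a property of the common region $W$ and so need only be established once, for $X$, after which it transfers to $Y$, leaving only the explicit local analysis inside $\cB$.
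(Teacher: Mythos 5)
Your overall architecture (push a hypothetical disk to one side of $\cS$, settle $X$ first, transfer the exterior case from $X$ to $Y$, reduce boundary-compressions to compressions) matches the paper's, and your two interior arguments are essentially sound modulo first establishing, by elementary parity/linking arguments, the incompressibility of the punctured disks you compress against. But your exterior case for $X$ contains a genuine error, and it is not repairable in the form you state it: you claim that a compressing disk $E\subset W$ with essential boundary leads to a contradiction with hyperbolicity of $X$ alone (``the remaining two pairings would make $L$ split or non-prime,'' and the $\g_1$-versus-$\g_2$ pairing is ``obstructed by the clasp of $C$''). This is false. Whenever the exterior $(M\setminus\cB, L\setminus(\cB\cap L))$ is a rational tangle in a 3-ball, the two exterior arcs can be isotoped into $\cS$, which produces a compressing disk for $\cQ$ lying entirely in $W$; yet $X$ can perfectly well be hyperbolic in this situation --- the paper's Figure~\ref{chainmovecounterexample} is exactly such an example, and an alternating 3-chain gives a hyperbolic example realizing the pairing that separates the $\g_1$-punctures from the $\g_2$-punctures (the clasp of $C$ obstructs only \emph{interior} disks with that pairing, not exterior ones). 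What your surgery argument actually yields --- and what the paper's proof establishes --- is that the two surgered annuli are boundary-parallel, so that $L\setminus(\cB\cap L)$ can be isotoped into $\cS$; the conclusion is not a contradiction but rather that $M=S^3$ and the exterior is a rational tangle. The claim is therefore only true under the standing hypothesis, stated immediately before it in the paper, that the exterior is \emph{not} a rational tangle in a 3-ball, and any correct proof must invoke that hypothesis; your proposal never does.

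The same defect infects your exterior boundary-compression argument: hyperbolicity of $X$ only tells you that the annulus obtained by tubing is inessential (it may be boundary-parallel), not that it cannot exist. The paper avoids this entirely by reducing boundary-compressions to compressions: if the arc $\alpha=E\cap\cQ$ joins two distinct punctures, the frontier of a neighborhood of $E$ outside $\cB$ is a compressing disk for $\cQ$; if $\alpha$ returns to the same puncture, then $\beta=E\cap\partial N(L)$ together with an arc of $\cQ$ bounds a disk $\wt{E}$ in $\partial N(L)$, and $E\cup\wt{E}$ compresses $\cQ$. Either way one lands back in the compression case, where the rational-tangle dichotomy (and the standing hypothesis excluding it) finishes the proof.
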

\begin{proof}
We prove that if $\cQ$ is compressible in $X$ or in $Y$, then
$(M \setminus \cB, L \setminus (\cB\cap L))$
is a rational tangle in a 3-ball. We first prove this property in $X$. Let $\g$
be a nontrivial simple closed curve in $\cQ$ and
assume that there is a compact disk $E\subset X$
with $\partial E = \g=E\cap \cQ$. Then, each of
the two disks $E_1$ and $E_2$ bounded by $\g$ in $\cS$
must contain
exactly two of the punctures of $\cQ$, otherwise
we could attach a one-punctured disk in $\cQ$ to $E$ to find
an essential disk in $X$, contradicting its hyperbolicity.

If $E$ were contained in $\cB$, then $E\cup E_1$ and
$E\cup E_2$ are two spheres in $\cB$, each punctured twice
by $L$. Since both punctures in each sphere cannot come from
distinct arcs in $L\cap \cB$, $E$ separates $\cB$
into two balls $B_1,B_2$, where $\g_1\subset B_1$
and $\g_2 \subset B_2$, and it then follows that $C$ cannot
link $\g_1$ and $\g_2$ simultaneously, a contradiction.

Next assume that
$E\cap{\rm int}(\cB ) = \es$.
Let, for $i=1,2$, $A_i = E\cup E_i\setminus L$;
then each $A_i$ is an annulus in $X$.
Since each $A_i$ is incompressible and $X$ is hyperbolic,
$A_i$ is boundary parallel. Therefore, the closure
of $A_i$ in $M$ bounds a closed ball
$B_i \subset M\setminus {\rm int}(\cB )$
with $\partial B_i = E_i\cup E$ and such that $B_i \cap L$ is
an unknotted arc in $B_i$. Hence, we can isotope $L \cap B_i$
through $B_i$ to the surface $\cS$.
Then, after the isotopy, $S = \partial N(\cB)$
is a sphere in $X$.
Since $X$ is hyperbolic, $S$ bounds a ball which is
disjoint from $\cB$, and this is a contradiction unless
$M = \sn3$.

If $M = \sn3$, then the fact $L \cap B_i$ can be isotoped
through $B_i$ to the surface $\cS$ implies
$L \setminus (\cB \cap L)$ can be isotoped to be
two disjoint embedded arcs on $\cS$. Hence,
$(M \setminus \cB, L \setminus (\cB \cap L))$ is
a rational tangle determined by $\gamma$, and up to isotopy, $E$ is
the only compression disk for $\cQ$ in $X$.

Note that the above argument implies that
$\cQ$ is also incompressible in $Y$, as we next explain.
If $E\subset Y$ was a compressing disk with
$\partial E = E\cap \cQ$,
then $E$ necessarily is contained in $\cB$,
otherwise $E \subset Y\setminus \cB$ and
$Y\setminus \cB = X \setminus \cB$
would give a compression disk for $\cQ$ in $X$.
Once again, $E\subset \cB$ gives that $E$
separates $\cB$ into two balls $B_1,\,B_2$
such that $\g_1 \subset B_1$ and $\g_2\subset B_2$.
Then, since $C_1$ links $\g_1$, $C_1 \subset B_1$. And since
$C_2$ links $\g_2$, $C_2 \subset B_2$.
But then $E$ separates $C_1$ from $C_2$ in $\cB$, a contradiction
to the fact they are linked in $\cB$.

To prove boundary-incompressibility of $\cQ$ in either $X$ or $Y$,
suppose $E$ is a boundary-compression
disk such that $\partial E = \alpha \cup \beta$ with
$\alpha = E \cap \cQ$. If $\alpha$ connects two distinct punctures
of $\cQ$ and $N(E)$ is a small neighborhood of $E$ in $M$, then
$\partial N(E) \setminus (\partial N(E) \cap \cB)$
is a compression disk for $\cQ$, a contradiction.

If both endpoints of $\alpha$ are at the same puncture, then,
since the interior of $\beta$ is disjoint
from $\cQ$, then $\beta$, together with an arc in $\cQ$,
bound a disk $\wt{E}$ in $\partial N(L)$. Then, $E\cup \wt{E}$
is a compression disk for $\cQ$, a contradiction.
\end{proof}

For the next arguments in the proof,
let $D_1,\,D_2 \subset Y\cap \cB$ denote two twice-punctured
disks bounded respectively
by $C_1,\,C_2 \subset L'$ and let
$\ol{D}_i$ denote the closure of $D_i$ in $M$; thus each $\ol{D}_i$
is a disk in $\cB$.
Then, we prove the following.

\begin{claim}\label{claimDi}
The twice punctured disks
$D_1$ and $D_2$ are incompressible and boundary-incompressible
in $Y$.
\end{claim}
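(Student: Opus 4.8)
The plan is to argue purely topologically in $M$ via linking numbers, since we cannot yet assume that $Y$ is irreducible or hyperbolic. Regard each $\ol D_i$ as a disk in $\cB$ whose boundary is the trivial component $C_i$ and whose interior meets $L'$ in exactly two points; by the description in the proof of Claim~\ref{Qincomp}, the component $C_1$ links both the strand $\g_1$ and the adjacent chain circle $C_2$, while $C_2$ links $\g_2$ and $C_1$. Thus, viewed as a properly embedded surface in $M\setminus \mathrm{int}\,N(L')$, each $D_i$ is a planar surface with three boundary circles (a pair of pants): one on $\partial N(C_i)$ and the other two meridians on the boundary tori of its two punctures. Every nontrivial simple closed curve in a pair of pants is isotopic to one of its three boundary circles, so to prove incompressibility it suffices to rule out a compressing disk $E\subset Y$ whose boundary $\partial E$ is (a) a meridian of one of the two punctures, or (b) isotopic in $\partial N(C_i)$ to the longitude $\partial \ol D_i$ bounding $\ol D_i$.

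In case (a), $\partial E$ is a meridian of the punctured component, hence has linking number $\pm1$ with that component; but $E\subset Y$ is disjoint from $L'$, which forces the linking number to vanish, a contradiction. In case (b), capping $E$ with the annulus in $\partial N(C_i)$ between $\partial E$ and $\partial \ol D_i$ produces a disk in $M$ whose boundary is the longitude $\partial\ol D_i$ and whose interior is disjoint from $L'\setminus C_i$; since this longitude is homologous to the core $C_i$, the component $C_i$ becomes null-homologous in the complement of the remaining components, forcing $\mathrm{lk}(C_1,C_2)=0$ and contradicting that $C_1$ and $C_2$ are linked. Hence $D_i$ is incompressible in $Y$.

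For boundary-incompressibility I would follow the reduction used at the end of the proof of Claim~\ref{Qincomp}. Given a boundary-compression disk with $\partial E=\a\cup\be$, where $\a=E\cap D_i$ is an essential arc of the pair of pants and $\be\subset \partial N(L')$, there are two possibilities: either $\a$ joins two distinct boundary circles of $D_i$, or $\a$ has both endpoints on the same boundary circle. In the first case, pushing off along a small neighborhood $N(E)$ yields a genuine compressing disk for $D_i$ whose boundary encircles the two joined circles, contradicting the incompressibility just established. In the second case, $\be$ together with a subarc of the relevant boundary circle bounds a disk on that boundary torus, and gluing it to $E$ again yields a compressing disk, giving the same contradiction.

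I expect the main obstacle to be case (b) of the incompressibility argument: one must make the capping-and-homology step rigorous without invoking irreducibility or hyperbolicity of $Y$ (which are not yet available), and carefully verify that a boundary-parallel compressing curve really forces $C_i$ to split off, so that the linking obstruction $\mathrm{lk}(C_1,C_2)=\pm1$ applies. The classification of essential arcs in the pair of pants, needed for boundary-incompressibility, is routine but should be checked case by case.
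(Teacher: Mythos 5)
Your pair-of-pants classification of the candidate compressing curves is correct, but the way you dispose of them has a genuine gap: Claim~\ref{claimDi} is part of the proof of the Chain Move Theorem for an \emph{arbitrary} 3-manifold $M$, and linking numbers are simply not defined there. The paper's proof begins by using Claim~\ref{Qincomp} together with an innermost-disk replacement argument to isotope the compressing disk $E$ \emph{into the ball} $\cB$, and only then runs separation and parity arguments, which are legitimate precisely because everything lies in a ball: a sphere in $\cB$ must meet each closed component of $L'\cap\cB$, and each arc of $L'\cap\cB$ (whose endpoints are on $\partial\cB$), an even number of times. You skip this isotopy, and it cannot be skipped. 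Concretely, in your case (a), when the punctured component is $\G_i$ (the component containing $\g_i$), that component leaves $\cB$ and may be homologically essential in $M$; the honest version of your argument --- the sphere $E\cup E''$, where $E''$ is the once-punctured subdisk of $D_i$, meets $\G_i$ exactly once --- yields no contradiction in a manifold with $H_1(M;\Z/2)\neq 0$ and $H_2(M;\Z/2)\neq 0$, where embedded spheres need not separate and closed curves need not bound. (For the puncture coming from $C_{3-i}$ your argument can be repaired, since $C_{3-i}$ bounds $\ol{D}_{3-i}$ inside $\cB$; for $\G_i$ it cannot.) The same objection hits your case (b), which you flagged but did not resolve; the paper resolves both cases at once by working inside $\cB$, where the linking of $C_1$ with $C_2$ and of $C_i$ with $\g_i$ is used rigorously via intersections with the disks $\ol{D}_j$ and parity.

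The boundary-incompressibility half has a second, independent gap. You assert that $\be$, together with a subarc of the boundary circle containing its endpoints, bounds a disk on the corresponding torus of $\partial N(L')$. That is true for $\cQ$ --- each relevant torus carries \emph{two} boundary circles of $\cQ$, and $\be$ avoids both, so it lies in one of the complementary annuli and must cut off a disk --- which is why the reduction at the end of Claim~\ref{Qincomp} works. It is false for $D_i$: each torus carries only \emph{one} boundary circle of $D_i$, so $\be$ may wind around the torus (longitudinally around $\partial N(\G_i)$ or $\partial N(C_{3-i})$, meridianally around $\partial N(C_i)$), in which case $\be$ united with a subarc of the boundary circle is essential on the torus and bounds no disk there. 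Eliminating this wrapping is exactly what the paper's three-case analysis does: after isotoping $E$ into $\cB$, an arc $\be\subset\partial N(\g_1)$ is confined to the annulus $\partial N(\g_1)\cap\cB$, so the boundary of $E\cup E'$ is trivial or a meridian; and when $\be$ lies on $\partial N(C_1)$ or $\partial N(C_2)$, the slope of $\partial(E\cup E')$ is pinned down by the linking of $C_1$, $C_2$, $\g_1$, $\g_2$ inside $\cB$. Each surviving possibility then produces a once-punctured sphere in $\cB$, which parity forbids. (A smaller point: your ``first case,'' an arc $\a$ joining two distinct boundary circles of $D_i$, cannot occur at all, since $\be$ would then have to join distinct components of $\partial N(L')$.)
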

\begin{proof}
Suppose there
were a disk $E\subset Y$, ${\rm int}(E)\cap D_i = \es$
with nontrivial boundary in $D_i$.
Since $\cQ$ is incompressible and we may assume
general position, any component in $E\cap \cQ$ is
a simple closed curve that is trivial both in $E$ and in $\cQ$.
Choose an innermost curve $\a\subset E\cap \cQ$
in the sense that the interior of the
disk $E'\subset E$ bounded by $\a$ does not intersect $\cQ$
and let $E''$ be the disk bounded by $\a$ in $\cQ$.
Then $E'\cup E''$ is a sphere that is either
in the hyperbolic manifold $X$ or in $Y\cap \cB$.
In either case, $E'\cup E''$ bounds a ball in $Y$
that can be used to isotope $E'$ to $E''$ and further to remove
$\a$ from the intersection $E\cap \cQ$. After repeating this disk
replacement argument a finite number of times,
we may assume that $E\subset \cB$.

Let $E'$ be the disk in $\ol{D_i}$ bounded
by $\partial E$. Then $E\cup E'$ is a sphere in $\cB$
which is punctured only once by at least one of the components
in $L'\cap \cB$, a contradiction that shows that $D_1$ and $D_2$
are incompressible in $Y$.

%To finish the proof of Claim~\ref{claimDi},
%we argue by contradiction and assume that,
%for some $i\in \{1,2\}$, $D_i$ is boundary-compressible.
%Then, there exists a disk $E$
%such that $\partial E = \alpha \cup \beta$, where $\alpha$ and
%$\beta$ are arcs, $\alpha = E \cap D_i$ and
%$\beta \subset \partial N(L')$.
%Since $\be$ cannot join points in distinct components of
%$\partial N(L')$, $\alpha$ must connect
%$\partial N(C_i)$ to itself
%and separate the two punctures on $D_i$. Hence,
%$\partial E\subset \cB$ and, after a disk replacement argument,
%we may assume that $E \subset \cB$.
%Let $E'$ be one of the two disks obtained by
%removing $\alpha$ from $\ol{D_i}$, and we may choose
%$E'$ in such a way that
%$E'' = E \cup E'$ is a disk in $\cB$ with nontrivial boundary
%in $\partial N(C_i)$.
%But then $\partial E''$ must be a longitude of $\partial N(C_i)$
%and any such disk would be
%punctured twice by $L'\cap \cB$, a contradiction since
%$E\cap L' = \es$ and $E'$ is only punctured once by $L'$.

To finish the proof of Claim~\ref{claimDi},
we argue by contradiction and assume that,
for some $i\in \{1,2\}$, $D_i$ is boundary-compressible.
Then, there exists a disk $E$
such that $\partial E = \alpha \cup \beta$, where $\alpha$ and
$\beta$ are arcs, $\alpha = E \cap D_i$ and
$\beta \subset \partial N(L')$.
For simplicity, we assume that $i = 1$ and note that the case $i = 2$
can be treated analogously.
We also notice that Claim ~\ref{Qincomp} allows us to isotope $E$ in $Y$ to lie in $\cB$. 

Since $\be$ cannot join points in distinct components of
$\partial N(L')$, there are three cases to consider. First,
$\alpha$ could connect
$\partial N(C_1)$ to itself
and separate the two punctures on $D_1$. 
%In this case, $\partial E\subset \cB$ and, after a disk replacement argument, (HAVING THE NEW LINE ABOVE WE DON'T NEED THIS)
%we may assume that $E \subset \cB$.  (OR THIS) 
Let $E'$ be one of the two disks obtained by
removing $\alpha$ from $\ol{D_1}$, and we may choose
$E'$ in such a way that
$E'' = E \cup E'$ is a disk in $\cB$ with nontrivial boundary
in $\partial N(C_1)$.
But then $\partial E''$ must be a longitude of $\partial N(C_1)$
and any such disk would be
punctured twice by $L'\cap \cB$, a contradiction since
$E\cap L' = \es$ and $E'$ is only punctured once by $L'$.

The second possibility is that both endpoints of $\alpha$ are
in $\partial N(\g_1)$, with
$\alpha$ going around the puncture of $D_1$ that comes from $C_2$. 
Let $E'$ be the punctured disk cut by $\alpha$ from $D_1$ and let $E'' = E\cup E'$. So $E''$ is a once-punctured disk in $Y$. If $E''$ has trivial boundary in the boundary of $N(\gamma_1)$, then we have a sphere in $\cB$ punctured once by $C_2$, which cannot happen. Thus, $E''$ has nontrivial boundary in $\partial N(\gamma_1)$. Since $E\subset \cB$, the boundary of $E''$ is isotopic to a meridian curve on the boundary of $N(\gamma_1)$. Thus, after adding to $E''$ a meridianal disk in $N(\gamma_1)$, we have a sphere in $\cB$ punctured once by each of $\gamma_1$ and $C_2$, a contradiction, since any component entering a sphere in $\cB$ must also leave the sphere.

The last possibility is that $\alpha$ connects $\partial N(C_2)$ to itself 
and goes around the puncture $\gamma_1$ creates in $D_1$. Once again, let $E'$ be the once-punctured disk $\alpha$
cuts from $\D_1$ and let $E'' = E\cup E'$. Then, the closure of $E''$ is a disk in $\cB$ that is not punctured by $C_1$, and 
the fact that $\partial E''\subset \partial N(C_2)$ and the linking property between $C_1$ and $C_2$ implies that either
$\partial E''$ is trivial or it is a meridian in $\partial N(C_2)$ and the proof continues as previously,
finishing the proof of Claim~\ref{claimDi}.
\end{proof}

\begin{claim}
$Y$ does not admit any essential spheres or essential disks.
\end{claim}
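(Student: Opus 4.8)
The plan is to rule out essential spheres first and then deduce the absence of essential disks formally. Suppose $F\subset Y$ is an embedded sphere. I would put $F$ in general position with respect to the four-punctured sphere $\cQ$ and remove the circles of $F\cap\cQ$ by an innermost-disk argument. Choosing an innermost circle $\alpha$ on $F$, it bounds an unpunctured disk $E'\subset F$ (since $F$ is disjoint from $L'$), and Claim~\ref{Qincomp} controls $\alpha$ as a curve on $\cQ$: if $\alpha$ were essential in $\cQ$, i.e.\ separating the punctures two-and-two, it would be a compression disk, contradicting incompressibility; if $\alpha$ encircled a single puncture, then $E'$ together with the once-punctured disk it bounds in $\cQ$ would be a $2$-sphere meeting $L'$ transversally in exactly one point, impossible since each component of $L'$ is a closed curve and so crosses a sphere an even number of times. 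Hence $\alpha$ bounds an unpunctured disk $E''$ in $\cQ$, and $E'\cup E''$ bounds a ball (by irreducibility of $X$ if it lies in $M\setminus\cB$, by Alexander's theorem if it lies in $\cB$), which lets me isotope $F$ to reduce $|F\cap\cQ|$. After finitely many such moves $F$ is disjoint from $\cS$, so $F$ lies entirely in $M\setminus\cB$ or entirely in $\cB$.

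In the outside case $F\subset M\setminus\cB\subset X$, and since $X$ is hyperbolic, hence irreducible, $F$ bounds a ball $B$ in $X$. Because $F$ is disjoint from the connected set $\cB$, either $\cB\subset B$ or $\cB\cap B=\emptyset$; the former cannot occur, since $B\subset X$ is disjoint from $L$ while $\cB$ meets $L$. Thus $B\cap\cB=\emptyset$, so $B$ is disjoint from $L'=L$ off $\cB$, giving $B\subset Y$ and $F$ inessential. In the inside case $F\subset\cB$, I would make $F$ transverse to $D_1\cup D_2$ and remove $F\cap(D_1\cup D_2)$ by the same innermost-disk reduction, now using the incompressibility of $D_1,D_2$ from Claim~\ref{claimDi} (the one-puncture possibility is again excluded by the parity count). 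Once $F$ is disjoint from $D_1\cup D_2$, Alexander's theorem yields a ball $B'\subset\cB$ with $\partial B'=F$; since $\g_1,\g_2$ have endpoints on $\cS=\partial\cB$, which lies outside $B'$, and since each $C_i=\partial D_i$ bounds the disk $D_i$ punctured by $\g_i$, no component of $L'$ can be trapped inside $B'$. Hence $B'\cap L'=\emptyset$ and $F$ bounds a ball in $Y$. This establishes that $Y$ is irreducible.

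For essential disks I would invoke the observation recorded in Section~\ref{secprelim}: an irreducible $3$-manifold with toroidal boundary that is not a solid torus is automatically boundary-irreducible. Since $L'$ contains at least the two new components $C_1$ and $C_2$ together with $\G_1$, the manifold $Y$ has more than one cusp and so is not a solid torus; combined with the irreducibility just proved, $Y$ admits no essential disk. I expect the main obstacle to be the inside-the-ball analysis: guaranteeing that after the innermost-disk reductions the resulting sphere really does bound a ball disjoint from the chain tangle. This is precisely where the incompressibility of the twice-punctured disks $D_1,D_2$ and the explicit linking of $C_1,C_2$ with $\g_1,\g_2$ must be used, and it is the step where the geometry of the inserted tangle, rather than a purely formal argument, enters.
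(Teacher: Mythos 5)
Your proposal is correct and follows essentially the same route as the paper: innermost-disk exchanges along $\cQ$ to obtain an essential sphere disjoint from $\cS$, separate outside/inside-the-ball cases settled respectively by irreducibility of $X$ and by showing no component of $L'$ can be trapped in the sub-ball of $\cB$, and a reduction of essential disks to essential spheres. The only cosmetic differences are that the paper rules out trapped components using the Hopf linking of $C_1$ with $C_2$ and the linking of $C_1$ with $\g_1$ directly, rather than first making the sphere disjoint from $D_1\cup D_2$, and it inlines the tube construction $\partial N(D\cup N(J))$ for the disk case instead of citing the boundary-irreducibility observation from the preliminaries (whose proof is that very construction).
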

\begin{proof}
We argue by contradiction and first suppose that
there is an essential sphere $S$ in $Y$.
If $S$ intersects $\cQ$,
then, by incompressibility of $\cQ$, we can
exchange disks on $S$ for disks on
$\cQ$ in order to obtain an
essential sphere $S'$ in $Y$ that does not intersect
$\cQ$. If $S'\subset Y\setminus \cB$, then $S'\subset X$,
which implies $S'$ is the boundary of a ball $B\subset X$.
In this case, $B$ must be disjoint from $\cB$, since $C\subset \cB$;
hence, $B\subset Y$ which contradicts that $S'$ is essential in $Y$.
Thus, we may assume that $S'$ is contained in $\cB$, and so it bounds a
sub-ball $B$ of $\cB$. If $B$ intersects $C_1\cup C_2$,
then, by the linking properties of these circles, $C_1\cup C_2$ must
be contained in $B$.
As $\g_1 $ links $C_1$ in $\cB$, we arrive at a contradiction
because the endpoints
of $\g_1$ lie outside of $B$. This contradiction implies that
$L'\cap \cB$ is disjoint from $B$,
which in turn implies that $B\subset Y$, contradicting that
$S'$ is essential in $Y$.

Suppose now that there is an essential disk $D$ with boundary
in $\partial N(L')$.
Then, there is a component $J$ of $L'$ such that
$\partial D\subset \partial N(J)$,
and we let $ S = \partial N(D \cup N(J))$. It then follows that
$S$ is an essential sphere,
as it splits $J$ from the other components of $L'$, contradicting
the nonexistence of such spheres.
\end{proof}

\begin{claim}\label{essann}
$Y$ does not admit essential annuli.
\end{claim}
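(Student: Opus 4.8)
The plan is to show that any essential annulus $A$ in $Y = M \setminus L'$ leads to a contradiction, by exploiting the two incompressible, boundary-incompressible surfaces we have already established: the four-punctured sphere $\cQ = \cS \setminus L$ (Claim~\ref{Qincomp}) and the twice-punctured disks $D_1, D_2$ bounded by $C_1, C_2$ (Claim~\ref{claimDi}). The overall strategy mirrors the proof that $Y$ has no essential spheres: first normalize $A$ with respect to $\cQ$ using an innermost-disk / outermost-arc argument so that $A \cap \cQ$ is controlled, then analyze the pieces of $A$ inside $\cB$ and outside $\cB$ separately, pushing the hyperbolicity of $X = M \setminus L$ (no essential annuli) against the rigid structure inside the ball.

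First I would put $A$ in general position with respect to $\cQ$ and consider the curves of $A \cap \cQ$. Closed curves of intersection that are trivial in $\cQ$ can be removed by the same disk-replacement swap used in Claim~\ref{claimDi}, since $\cQ$ is incompressible in $Y$ and the innermost sphere bounds a ball. A closed curve essential in $\cQ$ that is inessential in $A$ would give a compression disk for $\cQ$, contradicting Claim~\ref{Qincomp}; if it is essential in both, then it is isotopic to a core of $A$, and $A$ gets cut by $\cQ$ into sub-annuli, at least one of which lies entirely in $X$ or entirely in $\cB$. Arcs of $A \cap \cQ$ with endpoints on $\partial A \subset \partial N(L')$ are handled by outermost-arc cuts, which would produce boundary-compression disks for $\cQ$ unless they too are removable; boundary-incompressibility of $\cQ$ rules out the bad configurations. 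After this normalization I expect $A$ to be decomposed into annular pieces each lying in $X$ or in $\cB$.

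Next I would treat the two locations. For a sub-annulus lying in $X = M \setminus L$: since $X$ is hyperbolic it admits no essential annuli, so each such piece is either boundary-parallel or compressible; a compressible annulus in an irreducible manifold is boundary-parallel (as recalled in the preliminaries for annuli that are boundary-compressible), so these pieces can be isotoped away, reducing $|A \cap \cQ|$ or pushing $A$ into $\cB$. For pieces inside $\cB$, the boundary curves lie on $\partial N(C_1), \partial N(C_2), \partial N(\g_1)$ or $\partial N(\g_2)$, and I would use the linking pattern of $C_1, C_2, \g_1, \g_2$ together with incompressibility of $D_1, D_2$ to show such an annulus must either be boundary-parallel in $\cB$ or force one of the $C_i$ to bound a once-punctured sphere, the same contradiction that appeared in Claim~\ref{claimDi}. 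Cases where $\partial A$ runs between two different cusps, or from a cusp back to itself, are enumerated according to which components the two boundary circles encircle.

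The main obstacle I anticipate is the case where $A$ survives normalization as a single annulus essentially intersecting $\cQ$ in core-parallel curves, with one boundary meridional on a cusp of the newly inserted chain and the other on a cusp of the original link $\G_1$ or $\G_2$. Here I cannot simply invoke hyperbolicity of $X$, since $A$ genuinely straddles the ball; instead I must show that such an annulus, capped using the twice-punctured disk structure, would produce either an essential annulus in $X$ (contradicting its hyperbolicity) or an annulus exhibiting $D_i$ as compressible or $\partial$-compressible, contradicting Claim~\ref{claimDi}. Carefully tracking the meridian-versus-longitude type of each boundary curve on $\partial N(C_i)$, and using that $C_i$ genuinely links $\g_i$ inside $\cB$, is where the real work lies; the remaining cases should reduce to the once-punctured-sphere contradiction already established.
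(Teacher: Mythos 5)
Your toolbox matches the paper's (incompressibility and boundary-incompressibility of $\cQ$ and of $D_1,D_2$, hyperbolicity of $X$, linking inside $\cB$), but as written the argument has two genuine gaps.

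First, your treatment of the pieces of $A$ cut off by $\cQ$ does not work. If $A\cap\cQ$ contains curves essential in both surfaces (core-parallel in $A$), the resulting sub-annuli have their boundaries on $\cQ$, not on $\partial N(L)$; Thurston's criterion for the hyperbolic manifold $X$ excludes only essential \emph{properly embedded} annuli with boundary on the cusp tori, so ``since $X$ is hyperbolic each such piece is boundary-parallel or compressible'' does not apply to these pieces. Moreover, the bridging lemma you invoke --- that a compressible annulus in an irreducible manifold is boundary-parallel --- is false; the preliminaries assert this only for \emph{boundary}-compressible annuli. (For a knotted properly embedded arc $\a$ with endpoints on a boundary torus, the annulus $\partial N(\a)\setminus \partial P$ is compressible via a meridian disk of the tube yet is not boundary-parallel.) The paper avoids both problems by organizing the proof around $A\cap(D_1\cup D_2)$ instead: it first shows that all intersections with the twice-punctured disks are arcs joining the two boundary circles of $A$, which forces every closed curve of $A\cap\cQ$ to be trivial in $A$ (an essential one would separate the two boundary circles, yet the arcs of $A\cap D_i$, disjoint from $\cQ$, join them), so the core-parallel configuration never arises; and when $A$ is disjoint from $D_1\cup D_2$, hence isotopic off $\cB$, compressibility of $A$ in $X$ is converted by disk replacement into compressibility of $A$ in $Y$ --- no boundary-parallelism claim is needed.

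Second, the cases you defer as ``where the real work lies'' are precisely the content of the claim, and your sketch does not supply them. When $\partial A\subset \partial N(C_1)\cup\partial N(C_2)$, the paper identifies $\cB\setminus(N(C_1)\cup N(C_2))$ with a thickened torus $T\times[0,1]$ minus a ball (the Hopf-link structure) and rules out both distributions of $\partial A$: both circles on one $\partial N(C_i)$ yields a boundary-parallel annulus in $Y$, while one circle on each forces $\g_1$ or $\g_2$ to puncture $A$ via the $(p,q)$/$(q,p)$-curve analysis. When a boundary circle lies on $\partial N(\G_i)$, the key step is a tautness normalization of $\partial A$ guaranteeing that the relevant innermost disk of $A\setminus\cQ$ (or the disk cut from $A$ by the crossing arcs $a\subset A\cap D_1$, $b\subset A\cap D_2$ in the case $\a_1\cup\a_2\subset\partial N(\G_1)\cup\partial N(\G_2)$) meets $\cQ$ in an arc joining two distinct punctures, producing a boundary-compression disk for $\cQ$ and contradicting Claim~\ref{Qincomp}. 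None of this follows from ``linking pattern plus incompressibility of $D_1,D_2$'' alone, so the proposal as it stands is an outline of a strategy rather than a proof.
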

\begin{proof}
Arguing by contradiction, assume there exists an essential
annulus $A$ in $M\setminus N(L')$.
Let $\a_1,\,\a_2$ denote the two
boundary components for $A$. Then, there are $J_1,\,J_2$
components of $L'$ such that $\a_1\subset \partial N(J_1)$
and $\a_2\subset \partial N(J_2)$.
After an isotopy of $A$ we will assume
without loss of generality that both $\a_1$ and $\a_2$ are
taut in the respective tori $\partial N(J_1)$,
$\partial N(J_2)$, in the sense that, in the product
structure generated by respective meridianal curves in
$\partial N(J_i)$,
each $\a_i$ is transverse to all meridians and also to all longitudes,
unless $\a_i$ is one of them.

We next rule out the various
possibilities for $A$, starting with the assumption
that $A$ does not intersect $D_1 \cup D_2$.

In this case, we may use the fact that
$\partial N(D_1\cup D_2)\setminus N(\g_1\cup\g_2)$
is isotopic to $\cQ$ to isotope $A$
in $M\setminus N(L')$ to lie outside of $\cB$.
Thus, $A$ is an annulus in $X$, and the fact that $X$ is hyperbolic
implies that $A$ is either compressible or boundary parallel
in $X$. If $A$ is compressible in $X$, then we may
use the fact that $\cQ$ is incompressible in $Y$
and a disk replacement argument to show that
$A$ is compressible in $Y$, a contradiction.

Next, we treat the case when
$A$ is boundary parallel in $M\setminus N(L)$.
In this case, $A$ defines a product region $W\subset M\setminus N(L)$
through which $A$ is parallel to a subannulus in $\partial N(L)$.
Since $C$ lies outside of $W$, separation properties
imply that $\cB$ is disjoint from $W$; hence,
$W\subset M\setminus N(L')$ from where it follows
that $A$ is boundary parallel in $Y$, a contradiction.

Now suppose that $A$ intersects $D_1 \cup D_2$ and
assume that $A$ has the fewest number of intersection
components in $A \cap (D_1 \cup D_2)$ for an essential annulus
in $Y$. Note that for $i=1,2$, the intersection
curves which may appear in $A\cap D_i$ are either simple closed
curves or arcs with endpoints in $\partial A$.

We next eliminate the possibility that
$A\cap D_i$ contains a simple closed curve.
Since $D_i$ is incompressible, by minimality of intersection curves,
any simple 
closed curve in the intersection $A \cap D_i$ is nontrivial in $A$.
Note that if $A\cap D_i$ contains a simple closed
curve that circles one puncture, we may take an innermost
such curve and
use the once-punctured disk on $D_i$ that it bounds to surger $A$
to obtain two annuli, each with fewer intersection curves and at
least one of them must be essential. So we may assume that all simple
closed curves in $A\cap D_i$ circle both punctures of $D_i$.
But then, the outermost of such intersection curves
bounds an annulus that again allows
us to surger $A$ to obtain an essential annulus with fewer
intersection curves.
Hence, all curves in $A\cap D_i$ are arcs
with endpoints in $\partial A$.

Next, we show that there are no arcs in $A \cap D_i$ that have
endpoints on the same boundary component of $A$. Assume that
$\alpha$ is such an arc and let $E_1$ be the disk
defined by $\a$ in $A$. We assume that $\a$ is
innermost in the sense that the interior of $E_1$
is disjoint from $D_i$.
Since Claim~\ref{claimDi} implies that $D_i$ is
boundary-incompressible,
it follows that $\a$ must cut a disk $E_2$ from $D_i$.
Then, $E = E_1 \cup E_2$ is a disk
with boundary $\partial E \subset \partial N(J)$.
Since $Y$ does not admit essential disks, it follows that
$\partial E$ is trivial in $\partial N(J)$, and
we may use the fact that all spheres in $Y$ bound
balls to isotope $A$ so that $E_1$ moves past $E_2$,
thus eliminating the
intersection curve $\a$ and
contradicting minimality of the number of intersection
components.

In particular, if $A$ intersects $D_i$, both $\a_1$ and
$\a_2$ must intersect $D_i$,
and none of the
intersection arcs on $A \cap D_i$ can cut a disk off $D_i$, as if they
did, $A$ would be boundary-compressible and hence boundary-parallel
since $Y$ is irreducible.
Note that because there is at
least one arc of intersection
of $A$ with a $D_i$, and such arc goes from $\a_1$ to $\a_2$, then
$\partial A\subset (\partial N(C_1)\cup \partial N(C_2)\cup
\partial N(\G_1) \cup \partial N(\G_2))$. Moreover,
since both $\a_1$ and $\a_2$ intersect $D_1\cup D_2$
and we assume minimality of intersection components
in $\partial A\cap (D_1\cup D_2)$,
no component of $\partial A$ can be a meridian in $\partial N(\G_1)$
or in $\partial N (\G_2)$, hence any closed curve in $A\cap \cQ$
must be trivial in $A$, and, consequently, trivial in $\cQ$.

We next consider the case that
$\partial A \subset \partial N(C_1) \cup \partial N(C_2)$.
Then by incompressibility of $\cQ$,
we can isotope $A$ to lie inside $\cB$.
Moreover, $C_1 \cup C_2$ is a Hopf link with complement
in the 3-sphere that is a
thickened torus $T \times [0,1]$. Thus,
$\cB \setminus (N(C_1) \cup N(C_2))$ is the complement of
a ball $B$ in $T\times [0,1]$, where we identify
$\partial N(C_1)$ with $T\times\{0\}$ and
$\partial N(C_2)$ with $T\times\{1\}$.

Assume that both boundary components of $A$ are on $\partial N(C_1)$.
Then, $A$ is an annulus
in $(T \times [0,1])\setminus B$ with both boundaries on $T \times \{0\}$.
In particular, in $T\times [0,1]$, $A$ is
boundary-parallel through a solid torus $V$
that $A$ cuts from $T \times [0,1]$.
Since $\partial V$ is a closed surface
in the interior of the three-ball $\cB$, it defines a unique compact
region disjoint from $\partial \cB = \partial B$,
from where it follows that $B$ must be disjoint from $V$.
But then both the arcs $\g_1$ and $\g_2$,
which have endpoints on $\partial \cB$, must also be disjoint from
$V$, meaning that $V\subset Y$,
and then $A$ is boundary-parallel in $Y$, a contradiction.
By symmetry, the same argument also proves
that $A$ cannot have both boundary components on $\partial N(C_2)$.

Next, suppose that one boundary of $A$ is on $\partial N(C_1)$ and
the other is on $\partial N(C_2)$.
Then again, $A$ can be seen as an annulus
in $(T\times [0,1]) \setminus B$, but now
its boundary is a pair of nontrivial parallel
curves on $T \times \{0\}$ and $T \times \{1\}$. These curves
are respectively realized as a $(p,q)$-curve\footnote{For given relatively
prime integers $p$ and $q$, a $(p,q)$-curve is a torus knot
that winds $p$ times around the meridian of the torus and
$q$ times around its longitude.}
on $\partial N(C_1)$
and a $(q,p)$-curve on $\partial N(C_2)$. But
there exist arcs $\wt{\g}_1$ and $\wt{\g}_2$ on $\cQ$
such that the closed curve $\g_1\cup \wt{\g}_1$
wraps meridianally around $C_1$ and
the closed curve $\g_2\cup \wt{\g}_2$
wraps meridianally around $C_2$,
where in $T\times [0,1]$, a meridian of $\partial N(C_2)$
corresponds to a longitude of $N(C_1)$. Hence, when we add $\g_1$
and $\g_2$ to $T\times [0,1] \setminus B$,
one wrapping meridianally around $T\times [0,1]$ and the other
wrapping longitudinally, at least one will puncture $A$,
a contradiction.

%Thus, at least one boundary component of $A$, say $\a_1$, must
%be on $\partial N(\G_i)$, for some $i\in\{1,2\}$.
%Then, $\a_1$ is not a meridian on $\partial N(\G_i)$ since otherwise
%we could isotope $A$
%so that $\a_1\cap (D_1 \cup D_2) = \es$, a case we already ruled out.
Thus, at least one boundary component of $A$, say $\a_1$, must
be on $\partial N(\G_i)$, for some $i\in\{1,2\}$.
As already explained, $\a_1$ is not a meridian on $\partial N(\G_i)$.

Next, assume that $\a_2$ is on $\partial N(C_1)$ or $\partial N(C_2)$.
Since $\cQ$ is incompressible and $Y$ is irreducible,
after performing a disk replacement argument, we may assume that
$A\cap \cQ$ is a family of pairwise disjoint arcs, each with both
endpoints in $\a_1$. Let $a$ be one of such arcs and assume that
$a$ cuts an innermost disk $D$ from $A$, in the sense that
$D\cap \cQ = a$. If $D\subset \cB$, then, if we let
$b=\partial D\setminus a$, it follows that
$b\subset (\partial N(\g_i))\cap \cB$ and
our assumptions on $\a_1$ being taut imply that $b$ joins two distinct
punctures of $\cQ$. But then it follows that $\partial D$
links $C_i$ on $\cB$, and $D$ must be
punctured by $C_i$, a contradiction.
Hence, it follows that
$D$ is to the outside of $\cB$.
Once again, our assumptions on $\a_1$ imply that
$a$ joins two distinct punctures of $\cQ$, from where it
follows that $D$ is a boundary-compression disk for $\cQ$,
which contradicts Claim~\ref{Qincomp}.

It remains to rule out the case where
$\a_1\cup\a_2\subset\partial N(\G_1)\cup \partial N(\G_2)$.
Let $a$ be an arc of intersection $A\cap (D_1\cup D_2)$.
Then our previous arguments give that $a$
joins $\a_1$ and $\a_2$ and that $a$ cannot cut a disk off $D_i$.
In particular, $a$ must necessarily intersect
the disk $D_j$ for $j\neq i$ and that creates another arc
$b\subset A\cap D_j$ which meets $a$ transversely at a point
$p$ and joins $\a_1$ and $\a_2$. In particular, $\G_1 = \G_2$.
The point $p$ separates both arcs $a$ and $b$, and that
defines a unique disk
$D\subset A$ with boundary given by one arc in $a$, one arc
in $b$ and one arc $c$ in $\a_1$.
Note that $D \cap D_i \subset a\cup b$, since any arc in
$A\cap D_i$ must join $\a_1$ and $\a_2$.
Let $E$ be a connected component of $D\setminus \cB$ that contains
a subarc of $c$ in its boundary.
Such component exists because the endpoints of $a$ and $b$ on $D$
are on distinct disks $D_1$, $D_2$
and hence $c$ cannot be contained in $\cB$. Once again,
the fact that $\a_1$ is taut
gives that $\partial E\cap \cQ$ is an
arc joining two distinct punctures of $\cQ$. But then, $E$ is
a boundary-compression disk for $\cQ$, a contradiction.

The cases treated above
rule out the possibility that $Y$ admits an essential annulus,
thereby proving Claim~\ref{essann}.
\end{proof}

\begin{claim}\label{clnotorus1}
$Y$ does not admit essential tori.
\end{claim}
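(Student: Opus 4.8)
The plan is to assume, for contradiction, that $Y$ contains an essential (incompressible and not boundary-parallel) torus $T$, to put it in general position with respect to $\cS$ and the twice-punctured disks $D_1,D_2$, and to reduce to a situation where $T$ is disjoint from these surfaces. First I would minimize the number of curves in $T\cap(\cQ\cup D_1\cup D_2)$ over all tori isotopic to $T$. Any intersection curve bounding a disk in $T$ must, by incompressibility of $\cQ$, $D_1$ and $D_2$ (Claims~\ref{Qincomp} and~\ref{claimDi}), bound a disk on the corresponding surface, and since $Y$ is irreducible the two disks cobound a ball, allowing the curve to be removed; conversely a curve trivial on one of these surfaces but essential on $T$ would contradict incompressibility of $T$. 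Curves of $T\cap D_i$ encircling a single puncture would produce a once-punctured sphere inside $\cB$, which is impossible, and curves encircling both punctures are parallel to $\partial D_i$ and can be pushed off. Hence I expect to arrange $T\cap(D_1\cup D_2)=\es$, and then, using that $\partial N(D_1\cup D_2)\setminus N(\g_1\cup\g_2)$ is isotopic to $\cQ$ exactly as in the proof of Claim~\ref{essann}, to conclude that either $T$ isotopes out of $\cB$ into $X$, or $T$ lies entirely inside $\cB$.

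If $T\subset X$, then since $X$ is hyperbolic it admits no essential tori, so $T$ is either compressible or boundary-parallel in $X$. In the compressible case, a disk-replacement argument using the incompressibility of $\cQ$ in $Y$ (as in Claim~\ref{essann}) shows $T$ is also compressible in $Y$, contradicting that $T$ is essential. In the boundary-parallel case $T$ is parallel to $\partial N(J)$ for some component $J$ of $L$, with product region $W$. If $J$ is the trivial component $C$, then $W$ would have to contain the arcs $\g_1,\g_2$ running out to $\partial\cB$ while excluding $N(\g_1\cup\g_2)$, which is impossible; so $J=\G_i$. Here I would argue that the collar realizing the parallelism threads through $\cB$ as a neighborhood of $\g_i$ and avoids $C$, since $C$ encircles $\G_i$; passing to $Y$, where $C$ is replaced by the linked pair $C_1,C_2$ which likewise encircle $\g_1,\g_2$, the analogous collar of the modified $\g_i$ persists, so $T$ is boundary-parallel to $\partial N(\G_i)$ in $Y$ as well, contradicting essentiality.

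If instead $T\subset\cB$, then since $\cB$ is a ball, $T$ bounds a solid torus $V$ on the side disjoint from $\partial\cB$. Incompressibility of $T$ in $Y$ forces $V$ to meet $L'\cap\cB$, for otherwise the meridian disk of $V$ would compress $T$ inside $Y$. Because $T$ is disjoint from $\cS$ and from $L'$, and the arcs $\g_1,\g_2$ have their endpoints on $\partial\cB$, each $\g_i$ lies entirely outside $V$; hence $V$ can contain only $C_1$ and/or $C_2$. But $C_i$ links $\g_i$ in $\cB$, so if $C_i\subset V$ then $\g_i$ would be forced to enter $V$, contradicting $\g_i\cap V=\es$. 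Therefore $V$ contains no component of $L'$, making $T$ compressible in $Y$, a contradiction.

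The main obstacle I anticipate is the reduction step: showing that the essential intersection curves of $T$ with $\cQ$, namely those separating the four punctures two-and-two, which cannot be removed by incompressibility alone, can nonetheless be eliminated. I would handle these by cutting $T$ along $\cQ$ into annuli with essential boundary on $\cQ$ and using the absence of essential annuli in $Y$ (Claim~\ref{essann}), together with the boundary-incompressibility of $\cQ$, to show each such annulus is boundary-parallel, allowing an isotopy of $T$ across $\cS$ that lowers the intersection number; iterating should yield $T\cap\cQ=\es$. Verifying that this outermost-annulus argument terminates and does not reintroduce intersections with $D_1\cup D_2$ is the delicate part, as is the careful bookkeeping in the boundary-parallel transfer from $X$ to $Y$ in the second case.
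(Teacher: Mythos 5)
Your overall strategy (clean up intersections, reduce to $T$ disjoint from $D_1\cup D_2$, then play $T$ off against the hyperbolicity of $X$) matches the paper's, but the two steps that carry the real weight are wrong as written. First, the elimination of curves of $T\cap D_i$. After your cleanup every such curve is essential on $T$, and for an essential curve encircling a single puncture of $D_i$ there is no once-punctured sphere anywhere: surgering $T$ along the once-punctured disk produces a properly embedded \emph{annulus} with meridianal boundary on a component of $\partial N(L')$, not a sphere (the sphere arises only when the curve bounds a disk on $T$, a case you already removed). Likewise a curve encircling both punctures cannot be ``pushed off'': the parallelism annulus in $\ol{D}_i$ runs into $\partial \ol{D}_i = C_i\subset L'$, so the push-off would have to cross the link, which is illegal in $Y$. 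In both subcases the only available move --- and the paper's --- is to surger $T$ along the innermost once-punctured disk, respectively the outermost annulus, of $D_i$ and observe that the result is an \emph{essential annulus} in $Y$, contradicting Claim~\ref{essann}. You invoke Claim~\ref{essann} only for the curves of $T\cap\cQ$, where it does not apply: those pieces of $T$ have boundary on $\cQ$, not on $\partial N(L')$, which is what Claim~\ref{essann} concerns. In fact the minimization of $T\cap\cQ$ is unnecessary: once $T\cap(D_1\cup D_2)=\es$, the product structure on $\cB\setminus N(D_1\cup D_2\cup\g_1\cup\g_2)\cong \cQ\times[0,1]$ lets you isotope $T$ entirely out of $\cB$. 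This also disposes of your separate case $T\subset\cB$, whose treatment rests on the false assertion that a torus in a ball bounds a solid torus on the side away from $\partial\cB$ (the boundary of a cube-with-knotted-hole, placed so the knotted side faces inward, is a counterexample).

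Second, the boundary-parallel case in $X$. You consider only $J\in\{C,\G_1,\G_2\}$, but $L$ may have components disjoint from $\cB$, and for those you still must show that the product region $W$ between $T$ and $\partial N(J)$ misses $\cB$, so that the parallelism survives into $Y$; this is exactly what the paper proves, by showing that otherwise $\cQ\subset W$, whence $\cB\setminus(\g_1\cup\g_2\cup C)\subset W$, impossible because the product region $W$ has only one end and in particular cannot contain $\partial N(C)$. For $J=\G_i$ your argument contradicts itself: you have already arranged $T\cap\cB=\es$, so the collar realizing the parallelism cannot ``thread through $\cB$ as a neighborhood of $\g_i$''; and the claimed persistence into $Y$ is false in any case, since in $Y$ the region between such a tube and $\partial N(\G_i)$ contains $C_1\cup C_2$ and hence is not a product. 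The correct conclusion, again via the one-end argument, is that $J=\G_i$ and $J=C$ simply cannot occur when $T$ is disjoint from $\cB$. So the skeleton of your proof is the paper's, but the load-bearing mechanisms --- the surgery-to-essential-annulus contradiction with Claim~\ref{essann}, and the one-end argument for the parallelism region --- are missing or replaced by incorrect shortcuts.
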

\begin{proof}
We argue by contradiction and
suppose that $T\subset Y$ is a torus which is
incompressible and not boundary-parallel in $Y$.
First, suppose that $T$ does not intersect $D_1\cup D_2$.
Then we can isotope $T$ in $Y$ to assume that $T \cap \cB = \es$,
and then $T\subset X$. Since $X$ is hyperbolic,
either $T$ admits a compression disk in $X$ or
$T$ is boundary parallel in $X$.

First assume that $E\subset X$ is a compression disk for
$T\subset X\setminus \cB$.
Since $\cQ$ is incompressible in $X$, after disk replacements,
we may assume that
$E$ is disjoint from $\cQ$. In particular,
$E\subset X\setminus \cB\subset Y$, which is a contradiction.

Next, suppose that $T$ is parallel to the boundary of a neighborhood of
one of the components $J$ of $L$,
and let $W\subset X$ be the related proper product region with
boundary $T$. We claim that
$\cQ$ must be disjoint from $W$. Otherwise, $\cQ\subset W$ which
would imply that $\cB\setminus(\g_1\cup\g_2\cup C)\subset W$;
this is a contradiction because
$W$ has only one end corresponding to a single component of $L$.
Since $\cQ$ separates $X$ and is disjoint from $W$, then $W\subset Y$,
which means $T$ is boundary parallel in $Y$.
This proves that any essential torus in $Y$ must intersect
$D_1\cup D_2$.

Let $T\subset Y$ be an essential torus that
intersects $D_i$, for some $i\in\{1,2\}$.
Next, we prove that $Y$ must
contain an essential annulus,
which contradicts Claim~\ref{essann}.
After possibly replacing disks in $T$ by disks in the
incompressible surface $D_i$, we may assume that any component
in $T\cap D_i$ is homotopically nontrivial in $D_i$;
let $\g\subset T\cap D_i$ be one such components.
First assume that $\g$ encircles a single puncture in $D_i$ and
choose it to be
an innermost such curve in $T\cap D_i$.
Using the once-punctured disk bounded by $\g$ in $D_i$ to surger
$T$, we obtain an essential annulus in $Y$, as claimed.
Next, assume that $\g$ encircles both
punctures of $D_i$ and that it is an outermost such
curve on $T\cap D_i$.
In this case, we may use the outer annulus on $D_i$ to surger
$T$ in order
to obtain an essential annulus in $Y$, thereby proving
Claim~\ref{clnotorus1}.
\end{proof}

Having proved that there are no essential disks, spheres, tori
or annuli in $Y$, then $Y$ satisfies
Thurston's conditions for hyperbolicity, proving
Theorem~\ref{chain} when $M$ is orientable.

The case when $M$ is nonorientable can be proved using the
orientable case as we next explain.
Suppose that $M$ is nonorientable and that $L$, $L'$ and $\cB$ are
as stated. Let $\Pi\colon \wh{M}\to M$ be the oriented
two-sheeted covering of $M$ and let $\wh{L} = \Pi^{-1}(L)$
and $\cB_1,\,\cB_2$ be the two connected components of $\Pi^{-1}(\cB)$.
Then, $\wh{L}$ is a hyperbolic link in $\wh{M}$ and
$\wh{L}\setminus \cB_1$ is not a rational tangle
in a 3-ball, since $\wh{L}\cap \cB_2$ is diffeomorphic to
$L\cap \cB$.
Then, we may use the chain move to modify $\wh{L}$ in $\cB_1$,
replacing $\wh{L}\cap \cB_1$ by a tangle diffeomorphic to
$L'\cap \cB$, which
creates a hyperbolic link $\wh{L}'$ in $\wh{M}$.
Then, since $\wh{L}'\cap \cB_2 = \wh{L}\cap \cB_2$
and $\wh{M}\setminus \wh{L}'$ is hyperbolic,
we can use the chain move in $\cB_2$
to replace $\wh{L}'\cap \cB_2$ by a tangle diffeomorphic to
$L'\cap \cB$ and create another hyperbolic link $\wh{L}''$ in $\wh{M}$.
Since we may do this second replacement
in an equivariant manner with respect to the nontrivial covering
transformation $\sigma$ defined by $\Pi$, the restriction of $\Pi$
to the hyperbolic manifold $\wh{M}\setminus \wh{L}''$ is the two-sheeted
covering space of $M\setminus L'$. Since $\sigma$ is an
order-two diffeomorphism of $\wh{M}\setminus\wh{L}''$, the
Mostow-Prasad rigidity theorem implies that we may consider $\sigma$
to be an isometry of the hyperbolic metric of $\wh{M}\setminus\wh{L}''$.
Hence, the hyperbolic metric of $\wh{M}\setminus \wh{L}''$
descends to $M\setminus L'$ via $\Pi$,
which finishes the proof of Theorem~\ref{chain}.
\end{proof}

\section{The Switch Move Theorem}\label{secSwitch}

\begin{theorem}[{Switch Move Theorem}]\label{switch}
Let $L$ be a link in a 3-manifold $M$
such that $M\setminus L$ admits a complete hyperbolic metric of
finite volume. Let $\a\subset M$ be a
compact arc which intersects $L$ transversely in its two distinct
endpoints, and such that int$(\a)$ is a complete, properly
embedded geodesic of $M\setminus L$. Let
$\cB$ be a closed ball in $M$ containing $\a$ in its interior and
such that
$\cB\cap L$ is composed of two arcs in $L$, as
in Figure~\ref{switchbefore}.
Let $L'$ be the resulting link in $M$ obtained by replacing
$L \cap \cB$ by the components as appearing in
Figure~\ref{figaugmented}~(b).
Then $M\setminus L'$ admits a complete hyperbolic metric of
finite volume.
\end{theorem}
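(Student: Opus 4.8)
The plan is to mirror the structure of the proof of Theorem~\ref{chain}: reduce to the orientable case via the oriented double cover and Mostow--Prasad rigidity, and then verify Thurston's criterion by ruling out essential spheres, disks, tori, and annuli in $Y = M\setminus L'$. The central object will again be a natural surface associated to the ball $\cB$. In the switch-move picture, the new component $C$ bounds a twice-punctured disk $D$ inside $\cB$ (punctured by $\g_1$ and $\g_2$), and I would let $\cQ = \partial\cB \setminus L'$ be the punctured sphere lying on $\partial\cB$. The first key step, as in Claim~\ref{Qincomp}, is to prove that $D$ and $\cQ$ are incompressible and boundary-incompressible in $Y$. The essential new input here—distinct from the chain move—is the geodesic hypothesis on $\a$: since $\mathrm{int}(\a)$ is a geodesic in the hyperbolic manifold $M\setminus L$, by the useful fact recorded in Section~\ref{secprelim}, $\a$ cannot be homotoped rel endpoints into $L$. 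This prevents the two strands $g,g'$ from being isotopic across a disk in $\cB$, which is exactly what would let $\cQ$ or $D$ compress. I would use this to show that any hypothetical compressing or boundary-compressing disk would force $\a$ to be homotopic into $L$, a contradiction.

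Once incompressibility of $D$ and $\cQ$ is established, the elimination of essential spheres and disks should proceed almost verbatim as in the Chain Move Theorem: an essential sphere can be pushed off $\cQ$ by disk exchange, then is trapped either in $X = M\setminus L$ (hyperbolic, so it bounds a ball) or inside $\cB$, where the linking of $C$ with $\g_1\cup\g_2$ yields a contradiction; essential disks are ruled out by capping off to produce an essential sphere. The main work, as in the chain case, will be the absence of essential annuli and tori. For an annulus $A$, I would first isotope to minimize $A\cap D$ and argue that if $A$ misses $D$ then $A$ lies in $X$ and is controlled by hyperbolicity of $X$ together with incompressibility of $\cQ$. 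When $A$ meets $D$, I would reduce the intersection to arcs running between the two boundary components of $A$, using boundary-incompressibility of $D$ to discard arcs with both endpoints on one boundary circle, and then analyze the cases according to which tori ($\partial N(C)$, $\partial N(\G_1)$, $\partial N(\G_2)$) carry $\partial A$. Essential tori would then be surgered along curves of $T\cap D$ to produce an essential annulus, contradicting the annulus case.

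I expect the principal obstacle to be the annulus analysis in the case where a boundary component of $A$ lies on $\partial N(\G_i)$ and the arcs of $A\cap D$ interact with the geodesic arc $\a$. In the chain move the corresponding contradiction came from the explicit Hopf-link/$(p,q)$-curve geometry of the two inserted components $C_1,C_2$; in the switch move there is a single added component $C$ together with the surgered strands $\g_1,\g_2$, so the relevant linking bookkeeping is different and must be redone. The geodesic hypothesis should again be the decisive tool: a boundary-compression arc of $\cQ$ joining two distinct punctures, or an arc configuration forcing $A$ to run parallel to $\a$, can be translated into a rel-endpoint homotopy of $\a$ into $L$, which is impossible. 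I would therefore route each troublesome annulus or torus configuration toward exhibiting either such a forbidden homotopy or a once-punctured sphere in $\cB$, both of which are contradictions. After all four types of essential surfaces are excluded, Thurston's hyperbolization criterion gives that $Y$ is hyperbolic, and the nonorientable case follows by the same double-cover-and-rigidity argument used to close the proof of Theorem~\ref{chain}.
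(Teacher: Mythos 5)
Your overall strategy (double cover plus Mostow--Prasad for nonorientability, then Thurston's criterion via incompressibility of $\cQ$ and the twice-punctured disk $D$) matches the paper's argument, but only for the generic case, and it has two genuine gaps. The first and most serious: your plan hinges on proving that $\cQ=\partial\cB\setminus L'$ is incompressible in $Y$, and this is simply \emph{false} when $(M\setminus\cB,\,L\setminus(\cB\cap L))$ is a rational tangle in a 3-ball. A rational tangle complement is a handlebody, so $\cQ$ admits a compressing disk lying \emph{outside} $\cB$; since $Y\setminus\cB=X\setminus\cB$, that same disk compresses $\cQ$ in $Y$. The geodesic hypothesis on $\a$ cannot rescue this, because $\a$ lies inside $\cB$ while the compressing disk lies outside; your claim that compressions of $\cQ$ force a rel-endpoints homotopy of $\a$ into $L$ breaks down exactly here. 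Unlike the Chain Move Theorem, the Switch Move Theorem excludes no exceptional tangles, and Remark~\ref{remarkgeodesic} shows this case really occurs (e.g.\ $L$ the figure-eight knot). The paper disposes of it by a completely separate argument that your proposal has no substitute for: a rational tangle has an alternating projection, so $L$ is a prime alternating link in $S^3$; performing a half-twist along the twice-punctured disk bounded by $C$ turns $L'$ into an augmented alternating link $L''$, which is hyperbolic by~\cite{A1} and~\cite{mena} (the trivial and 2-braid degenerations being excluded by hyperbolicity of $X$), and then $L'$ itself is hyperbolic by the half-twist homeomorphism result of~\cite{ad1}.

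The second gap is in your treatment of essential tori. You only propose surgering $T$ along curves of $T\cap D$, which handles tori meeting $D$; the hard case is $T\cap D=\es$, when $T$ can be isotoped off $\cB$ and so lies in $X$. If such a $T$ compresses in $X$ via a disk $E$, you cannot push $E$ off $\cQ$ by disk replacement (the move that works in the chain setting), because in the switch setting $\cQ$ is compressible in $X$ from \emph{inside} $\cB$ as well: before the move, $L\cap\cB$ is just the two strands $g,g'$, which a disk in $\cB$ separates. So compressibility of $T$ in $X$ does not transfer to $Y$ by any cut-and-paste argument, and your generic fallback (``translate the configuration into a forbidden homotopy of $\a$ into $L$'') does not apply: a compressing disk meeting $\a$ gives no such homotopy. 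The paper's resolution here is a genuinely different use of the geodesic hypothesis: the compressing disk $E$ must meet $\a$; one forms a sphere $S$ from $T$ and two parallel copies of $E$, bounding a ball $B$ in $X$, so that $W=B\setminus\ol{N}(E)$ is a knot exterior contained in $Y$ with $\partial W=T$; lifting to $\mathbb{H}^3$, the lift of $\a$ passing through the hole is a geodesic, hence unknotted, so $W$ is a solid torus and $T$ compresses in $Y$, a contradiction. The property being used is not that $\a$ cannot be homotoped into $L$, but that geodesics in the universal cover $\mathbb{H}^3$ are unknotted --- an idea absent from your proposal.
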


\begin{proof}
We begin the proof by setting the notation.
Let $G$ and $G'$ be the connected components of
$L$ containing the arcs $g$ and $g'$ respectively,
as in Figure~\ref{figaugmented}(a).
Note that it can be the case $G = G'$.
Let $L'$ be the link formed by replacing $g\cup g'$ in $\cB$
by $\gamma_1\cup \gamma_2 \cup C$.
For $i=1,2$, let $\G_i$ be the component of $L'$ containing
$\g_i$. Note that possibly $\G_1 = \G_2$ and let $\G = \G_1 \cup \G_2$.

We split the proof into two cases, depending on whether or not
$(M \setminus \cB, L \setminus (\cB \cap L))$
is a rational tangle in a 3-ball.

\begin{claim}If $(M \setminus \cB, L \setminus (\cB \cap L))$ is a
rational tangle in a 3-ball, then $M \setminus L'$ is hyperbolic.
\end{claim}

\begin{proof} A rational tangle in a 3-ball always has a projection
that is alternating (see for instance~\cite{GK}).
Then, $L$ is a rational, alternating link in $S^3$ that is
prime, since $M\setminus L$ is hyperbolic.
By Corollary~2 of~\cite{mena},
a rational, alternating link in $S^3$ that is
prime is hyperbolic
if and only if it is nontrivial and not a 2-braid.
After forming $L'$,
we consider the link $L''$ obtained from $L'$ by doing a half-twist on
the twice-punctured disk bounded by $C$
to add a crossing so that $L'' \setminus C$ has an alternating
projection, as in Figure~\ref{rationaltangle}.
Then, $L''$ is in an augmented alternating link projection obtained
from a prime, non-split reduced alternating projection.
If $L'' \setminus C$ is neither trivial nor a 2-braid, $L''$ is
hyperbolic by~\cite{A1}. However, if $L'' \setminus C$ is trivial,
then $L$ is a 2-braid and hence it does not satisfy the hypothesis that
$M \setminus L$ is hyperbolic.
And if $L'' \setminus C$ is a 2-braid, then $L$ is a trivial knot,
again not satisfying the same hypothesis.
So $L''$ is a hyperbolic link in $S^3$. But from Theorem~4.1
of~\cite{ad1}, $L''$ is hyperbolic if and only if $L'$ is hyperbolic.
\end{proof}

\begin{figure}[htpb]
\begin{center}
\includegraphics[width=1.0\textwidth]{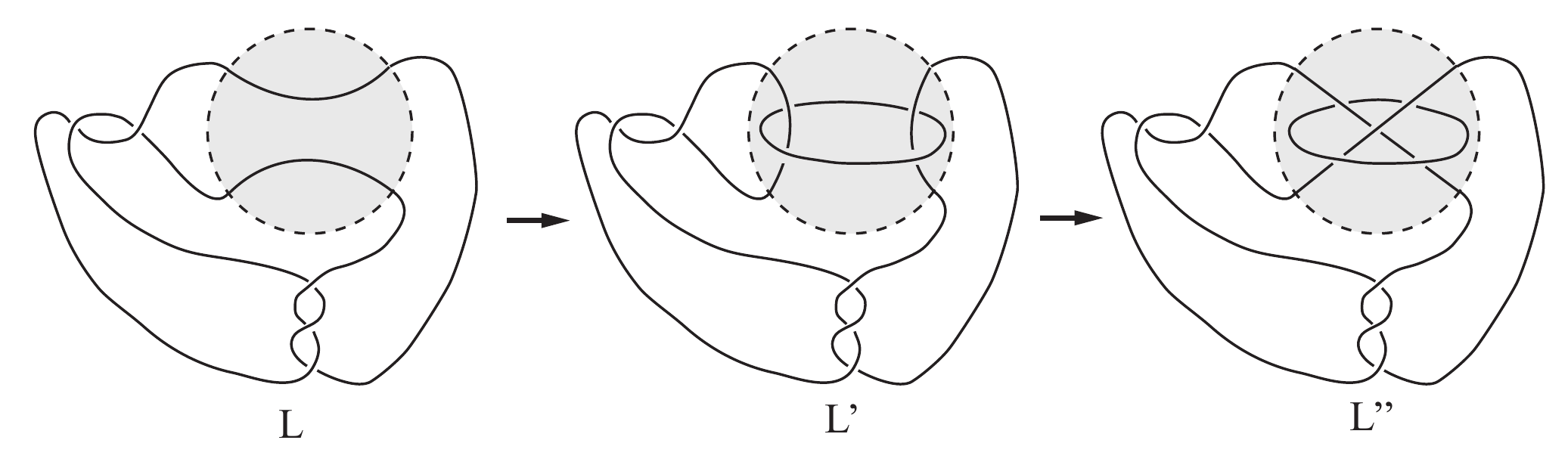}
\caption{Creating $L$, $L'$ and $L''$.}
\label{rationaltangle}
\end{center}
\end{figure}

\begin{remark}\label{remarkgeodesic}{\rm
If $M \setminus L$ is hyperbolic and
$(M \setminus \cB, L \setminus (\cB \cap L))$ is a rational tangle in
a 3-ball, then $L$ is either a rational link or a rational knot in
$S^3$ which is hyperbolic. In this case, there is always an arc
$\alpha$ as depicted in Figure~\ref{switchbefore} which is isotopic to
a geodesic and hence the switch move can be applied. This follows
because $\alpha$ is part of the fixed point set of an involution of the
complement, which is realized by an isometry, and fixed-point sets of
isometries must be geodesics (see~\cite{AR} for the details).}
\end{remark}

From now on, we assume that
$(M \setminus \cB, L \setminus (\cB \cap L))$ is not a rational
tangle in a 3-ball.
As in the proof of the Chain Move Theorem (Theorem~\ref{chain}), we
first assume that $M$ is orientable. We also
let $X = M\setminus L$ and $Y = M\setminus L'$
and we will prove that $Y$ is hyperbolic by showing
that there are no essential disks,
spheres, tori or annuli in $Y$.
Once again, we let $\cS = \partial \cB$,
$\cQ = \cS\setminus L= \cS\setminus L'$
and notice that the same arguments used to prove Claim~\ref{Qincomp}
can be used to prove that $\cQ$
is incompressible and boundary-incompressible in $Y$;
the details are left to the reader.

Let $D$ be the interior of a
twice-punctured disk in $\cB\setminus L'$ bounded by $C$ and
let $\ol{D}$ be its closure in $M$.

\begin{claim} $D$ is incompressible and boundary-incompressible
in $Y$.
\end{claim}

\begin{proof}
Using the facts that $\cQ$ is incompressible
in $Y$, that $X$ is hyperbolic
and $Y\setminus \cB = X\setminus \cB$,
we may use a disk replacement argument to assume that any
compression disk for $D$ is contained in $\cB\setminus L'$.
Arguing by contradiction, assume that $E\subset \cB\setminus L'$
is a disk with $\partial E = E\cap D$, and that $\partial E$ is
nontrivial in $D$. Let $E_1\subset \ol{D}$ be the subdisk bounded
by $\partial E$ in $\ol{D}$. Let
$S = E_1 \cup E$. Then, $S$ is a two-sphere in the ball $\cB$ which
is punctured only once by
at least one of the arcs $\g_1$ or $\g_2$, which is impossible.

Next, we prove that $D$ is boundary-incompressible.
Arguing by contradiction, let $E$ be a boundary-compression
disk for $D$. By incompressibility and boundary-incompressibility of $\cQ$,
we can assume that $E\subset \cB$.

First, consider the case when $E\cap D$ is an arc $\be$ with
endpoints in $\partial N(C)$ separating the punctures in $D$. Let $D'$
be the closure in $\ol{D}$ of
one of the punctured disks that results from that separation. Then
$E' = E \cup D'$ is a disk in $\cB \setminus N(C)$ with boundary in
$\partial N(C)$ and
$\partial E'$ must either be trivial or
longitudinal on $\partial N(C)$. If $\partial E'$
is trivial, we can form a sphere
by adding to $E'$ the disk on $\partial N(C)$ also
bounded by $\partial E'$, thereby creating a once-punctured sphere in
$\cB\setminus L'$, a contradiction. If, on the other hand,
$\partial E'$ is a longitude on $\partial N(C)$, then the fact $E'$ is
only punctured once by $L'$ is a contradiction to its
construction in $\cB$.

Suppose now that $E\cap D$ is an arc $\be$ from one puncture of
$D$ to the other. Then, $\partial E\setminus \beta$ is an arc
that cannot be contained in $\cB$, a contradiction.

Suppose now that $E\cap D$ is an arc $\be$ on $D$ that begins
and ends at the same puncture and goes around the second puncture.
Then $\partial E$ cannot link $C$ since $E$ is unpunctured.
So $\partial E \cap \partial N(\G)$ can be isotoped on
$\partial N(\G)$ into $D$. But then, $E$ becomes a compression disk
for $D$, a contradiction.
\end{proof}

Using that both $D$ and $\cQ$ are incompressible and boundary
incompressible, we next proceed with the proof of Theorem~\ref{switch}.

\begin{claim}
$Y$ does not admit essential spheres or essential disks.
\end{claim}
\begin{proof}
We first show that there are no essential spheres in $Y$.
Suppose that $S\subset Y$ is a sphere
and first assume that $S\cap \cB = \es$.
Then, $S \subset X$,
and, since there are no essential spheres in
$X$, it follows that $S$ bounds a ball
$B\subset X$. Since $L$ intersects $\cB$,
this gives that $B\cap \cB = \es$, hence
$B\subset Y$, proving that $S$ is not essential in $Y$.

Next, we treat the case where
$S$ intersects $\cB$.
We can take $S$ to have the least number of
intersection curves in $S \cap \cQ$ over all essential spheres.
If $S$ were contained in $\cB$, it bounds a ball in $\cB$
which is also a ball in $Y = M\setminus L'$, since $S\cap L' = \es$.
Next, we assume that $S\cap \cQ \neq \es$.
Then, there
exists a disk $E\subset S$ with $\partial E = E\cap \cQ$.
After a standard disk replacement argument using that
$\cQ$ is incompressible and that there are no essential spheres
that do not intersect $\cQ$,
we isotope $S$ to lower the number of components
in $S\cap \cQ$, which proves
that there are no essential spheres in $Y$.

To prove that there are no essential disks in $Y$,
we argue by contradiction and assume that $E$ is such a disk with
boundary on a regular neighborhood of a component $J$ of $L'$.
Then, $S = \partial N(E \cup N(J))$ is an essential sphere in $Y$, as
it splits $J$ from the other components of $L'$, a contradiction.
\end{proof}

\begin{claim}\label{cl:annuli}
There are no essential annuli in $M\setminus N(L')$.
\end{claim}
\begin{proof}[Proof of Claim~\ref{cl:annuli}]
Suppose that $A$ is an essential annulus in $M\setminus N(L')$.
Consider first the case $A \cap \cB = \es$.
Then, $A\subset M\setminus N(L)$ and it must
either compress or be boundary-parallel in $M\setminus N(L)$.
First, let us assume that
$E\subset M\setminus N(L)$ is a compression
disk to $A$ with boundary $\beta$.
Then $\beta$ separates $A$ into two sub annuli $A_1$ and
$A_2$, and $E\cup A_1$ and $E\cup A_2$ give rise to two essential disks
in $X$, which contradicts its hyperbolicity.

Hence, we may assume that $A$
is boundary-parallel in $M \setminus N(L)$.
Then, there is a component $J$ of $L$ and an annulus
$A'\subset \partial N(J)$
such that $\partial A'= \partial A$ and
$A\cup A'$ bounds a solid torus $W$ in $M \setminus N(L)$,
through which $A$ is parallel to $A'$.
If $\cB\cap W = \es$, then $A$ is boundary parallel in $Y$,
a contradiction. Hence, we may assume that $\cB\cap W \neq \es$.
Since $A\cap \cB = \es$
and $L\cap W =\es$, then $A'$ must intersect $\cB$ and
$J$ must be either $G$ or $G'$, which could be the same component.
Suppose first that $G$ and $G'$ are distinct. Then if $\lambda$ is an
arc in $\cB\setminus N(L)$
with an endpoint in $\partial N(G)$ and the other in
$\partial N(G')$, at least one
endpoint of $\lambda$ is not in $W$. Since
${\rm int}(\lambda)$ cannot intersect $A$, it follows that
$G'\subset W$ a contradiction.

Suppose now $G$ and $G'$ are the same component $J$. Since
$A \cap \cB=\es$, $\partial A$ is a pair of meridians on
$\partial N(J)$. Then, there is a ball $B'$ in $N(J)$ bounded by $A'$
and two meridianal disks in $N(J)\setminus \cB$ bounded by
$\partial A$. Then $W' = W \cup B'$ is a ball in $M$, and $J\cap W'$
is an unknotted properly embedded arc within it.
Since $\cB \cap W \neq \es$, then $\cB \cap W'\neq \es$. But then,
the fact that $\partial W'\cap \cB=\es$ implies that
$\cB\subset W'$. Hence $\a$ can
be homotoped into $\partial N(J)$, contradicting the fact it is
a geodesic with endpoints on $L$. So $A \cap \cB \neq \es$.

Next, we treat the case $A\subset \cB$.
Let $\a_1,\,\a_2$ denote the two components of $\partial A$.
First, we assume that $\a_1\subset \partial N(\G)$ and
$\a_2\subset \partial N(C)$. Since $A\subset \cB\setminus N(L')$,
$\a_1$ is either a meridian of $\partial N(\g_1)$
or a meridian of $\partial N(\g_2)$, and the symmetry
between $\g_1$ and $\g_2$ allows us to assume
$\a_1\subset \partial N(\g_1)$. Take
a meridianal disk $E_1$ in $N(\g_1)\cap \cB$ with $\partial E_1 = \a_1$.
Then, $E = A\cup E_1$ is a disk in $\cB\setminus N(C)$
with $\partial E = \a_2\subset \partial N(C)$.
Hence, $\a_2$ is a longitude of $\partial N(C)$.
In particular, $\a_2$ links $\g_2$ in $\cB$, and hence $\g_2$ must
puncture $E$, which is a contradiction.
This contradiction shows that if
$A\subset \cB\setminus N(L')$ is
an essential annulus, then $\a_1$ and $\a_2$ are either both
parallel curves on $\partial N(C)$ or both meridians
on $\partial N(\G)$.

Assume that $A$ is an essential annulus in $M\setminus N(L')$
such that $A\subset \cB$ and $\a_1$ and $\a_2$ are meridians
on $\partial N(\G)$.
Let $E_1,\,E_2$ be two meridianal disks in $N(\G)$ with
respective boundaries $\a_1,\,\a_2$. Then, $A \cup E_1\cup E_2$
is a sphere in $\cB$ that bounds a ball $B\subset \cB$, which
is either punctured once by each $\g_1$ and $\g_2$, which is
not possible, or twice by one of them, say $\g_1$.
Since $A$ is not boundary parallel, then
$C\subset B$. However, since $C$ links both $\g_1$ and $\g_2$,
$\g_2$ must be contained in $B$, which is a contradiction.

Still assuming that $A\subset \cB$,
it remains to obtain a contradiction
when both $\a_1$ and $\a_2$
are $(p,q)$-curves on $\partial N(C)$.
In this case, $\cB\setminus(N(\g_1)\cup N(C))$
is diffeomorphic to $T\times [0,1]$, where $T = \sn1\times \sn1$
is a torus, and we identify
$\partial N(C)$ with $T \times \{0\}$.
Since any annulus in $T\times [0,1]$ with
boundary in $T\times\{0\}$ is parallel
to an annulus in $T\times\{0\}$,
it follows that $A$ is parallel to
an annulus $A'\subset \partial N(C)$ with
$\partial A'=\a_1\cup \a_2$,
in the sense that there is a
solid torus region
$W\subset T\times [0,1]$
with $\partial W = A\cup A'$.
Since $N(\g_2) \subset T\times [0,1]$ and does not
intersect $\partial W$,
the fact that the endpoints of $\g_2$ lie in $T\times\{1\}$
implies that $N(\g_2)$ is disjoint from $W$. Therefore,
$A$ is boundary parallel in $\cB\setminus N(L')$,
contradicting the assumption that $A$ was essential.

From now on, we will assume that $A$ intersects $\cQ$.
We will also assume that $A$ minimizes the
number of intersection curves of an essential annulus
of $M\setminus N(L')$ with $\cQ$. In particular,
since $\cQ$ is incompressible, the connected
components of $A\setminus \cQ$ are either annuli
or disks whose boundary intersect $\partial A$.

Suppose first that there is an intersection arc $a$ in $A \cap \cQ$
that cuts a disk $E$ from $A$.
Then both endpoints of $a$ are on the same boundary component of $A$
and $E\cap \cQ\subset \partial E$.
Because $\cQ$ is boundary-incompressible, it must be the case that
$a$ cuts a disk $E_1$ from $\cQ$. Then $E_2 = E \cup E_1$ is a disk
properly embedded in $M \setminus N(L')$. Since there are no
essential disks in $M\setminus N(L')$, then
$\partial E_2$ bounds a disk $E_3$ in $\partial N(L')$. Then,
$E_2 \cup E_3$ is a sphere that bounds a ball in $M\setminus N(L')$,
through which $E$ can be isotoped to $E_1$, and just beyond to
eliminate $a$ from $A\cap \cQ$, contradicting that we assumed a
minimal number of intersection components.

Thus, we now know that there are only two possibilities for the
intersection curves in $A \cap \cQ$. Either they are all parallel
nontrivial closed curves on $A$ or
they are all arcs with endpoints on distinct boundary components
of $A$.

If there are no arcs in $A\cap \cQ$, then
$\partial A\cap\cQ = \es$.
Since $A$ and $\cQ$ are incompressible in $M\setminus L'$,
the minimality condition on the number of curves in
$A\cap \cQ$ implies that any curve in $A\cap \cQ$ is nontrivial on
both $A$ and on $\cQ$.

Next, we prove that any curve in $A\cap \cQ$ must
encircle two of the punctures of $\cQ$. Arguing by contradiction,
assume that $a$ is a simple closed curve in $A\cap \cQ$
and assume that $a$ bounds a once-punctured disk $E$ in $\cQ$.
Without loss of generality, we may assume that $E$
is innermost in the sense that $E\cap A = a$.
Using $E$ to surger $A$, we obtain
two annuli in $M\setminus L'$, where at least one
is still essential, and, after a small isotopy, with a
lesser number of intersection components with $\cQ$,
which is a contradiction. Thus, any
curve in $A\cap \cQ$ encircles two of the punctures of $\cQ$
and all intersection curves must be parallel on $\cQ$, separating
one pair of punctures from the other pair.

Still under the assumption that $\partial A \cap \cQ = \es$ and
$A\cap \cQ\neq \es$, we next rule out the case
where at least one boundary component of $A$, say $\a_1$, lies in $\partial N(C)$.
%Suppose that at least one of the boundary components of $A$,
%say $\a_1$, is on $\partial N(C)$. 
In this case, let $A_1$ be the
connected component of $A\cap \cB$ containing $\a_1$ and let
$a = \partial A_1\setminus \a_1$ denote the other boundary component
of the annulus $A_1$.
Let $E$ be one of the two disks defined by $a$ in $\cS$.
Then, $A_1\cup E$ is a disk in $\cB\setminus N(C)$ which has nontrivial
boundary in $\partial N(C)$; hence,
$\a_1$ is a longitude. After an isotopy on $A_1$,
we may assume that $\a_1 \cap D = \es$, and thus
$\partial A_1 \cap D = \es$. Since $D$ is incompressible,
we may isotope $A_1$ in $\cB\setminus L'$ to assume that $A_1\cap D$
does not contain any trivial curves. Moreover, if $\be\subset A_1\cap D$
is a nontrivial simple closed curve both in $D$ and in $A_1$,
then $\be$ cannot encircle one puncture in $D$, since
this would generate a thrice-punctured sphere in $\cB\setminus L'$,
a contradiction. Hence, any curve in $A_1\cap D$ encircles both
punctures of $D$; this gives rise to solid tori regions in
$\cB\setminus N(L')$ that can be used to further isotope $A_1$ in
$\cB\setminus L'$ to assume that $A_1\cap D =\es$.
In particular,
after capping $\a_1$ with a longitudinal disk in
$\cB\setminus (N(C)\cup A_1)$,
it follows that $a$ is the boundary of a disk in $\cB\setminus L$.

Since any other curve in $A\cap \cQ$ must be parallel
to $a$, $A\cap \cQ$ is a family
$\{a_1,\,a_2,\,\ldots,\,a_n\}$ of pairwise disjoint
simple closed curves, all parallel to each other
both in $\cQ$ and in $A$. In particular, for each
$i\in\{1,\,2,\,\ldots,\,n\}$, $a_i$ generates $\pi_1(A)$ and
bounds a disk $E_i\subset \cB\setminus L$, punctured once by the
arc $\a$. Note that $n\geq 2$, since otherwise $\a_2\subset \partial N(J)$,
where $J$ is a component of $L$ and then capping $A$ with a
disk in $\cB\setminus L$ bounded by $\a_1$ would yield an essential
disk in $X$.
This implies that there exists a subannulus
$A_2 \subset A\setminus \cB$ with boundary $\partial A_2 \subset \cQ$.
Let us assume that $\partial A_2 = a_1\cup a_2$.
Then (after possibly isotoping the disks $E_1,\,E_2$ in
$\cB\setminus L$ so they become disjoint) $S = A_2\cup E_1\cup E_2$
is a sphere in $X$, which bounds a ball $B\subset X$. Let
$V = B \setminus \cB$, then $V$ is a solid torus in
$X\setminus \cB = Y\setminus \cB$ and we may use $V$
to isotope $A$ in $Y$ to reduce the number of intersection components
in $A\cap \cQ$, a contradiction.

In this point on the proof, assuming that $\partial A \cap \cQ = \es$
and that $A\cap \cQ \neq \es$, it remains to rule out the case where
no boundary component of $A$
is on $\partial N(C)$. Then, $\partial A \cap \cB =\es$, since
otherwise a boundary component of $A$ would be a meridian in
$\partial N(\G)$ and we could isotope $A$ to reduce the number of
intersection components in $A\cap \cQ$.
Next, we show that, after an isotopy, $A\cap D = \es$.
Indeed, since $D$ and $A$
are both incompressible,
after a disk replacement argument we may assume that
any curve in $A\cap D$ is a simple closed curve that
generates $\pi_1(A)$ and either encircles one or two of the punctures
of $D$. If there is a curve $a\subset A\cap D$, we may assume that
either $a$ encircles one puncture of $D$ and is innermost
or that $a$ encircles the two punctures of $D$ and is outermost.
In either case, we can surger $A$ to obtain two annuli in $Y$,
where at least one is still essential in $Y$ and with less intersection
components with $\cQ$, a contradiction that proves that $A\cap D = \es$.
Next, the identification
$\cB\setminus N(D\cup L') \equiv \cQ\times [0,1]$ gives us that
we may isotope $A$ in $Y$ to make $A$ disjoint from $\cB$. Since
we already showed that there are no essential annuli in $Y$ disjoint
from $\cB$, this is a contradiction.

This shows that if $A$ is an essential annulus in $Y$,
then all intersection curves in $A \cap \cQ$ are arcs, the endpoints of
each of which lie on distinct boundary components
of $A$. Such arcs cut $A$ into
a collection of disks. Because $\cS$ separates $M$, there must be an
even number of such arcs and hence such disks, and the disks must
alternate between lying inside and outside $\cB$.

There are no such arcs that cut a disk from $\cQ$. Indeed, if there were
such a disk, by choosing an innermost one, we could surger $A$ along
this disk to obtain a disk $\Delta$ with boundary in $\partial N(L')$.
Since there are no essential disks in $M \setminus N(L')$,
$\partial \Delta$ must bound a disk
$\Delta'$ on $\partial N(L')$. Then
$\Delta \cup \Delta'$ is a sphere bounding a ball in
$M\setminus N(L')$. Hence, we can isotope $\Delta$ to $\Delta'$
through the ball, and hence isotope $A$ to an annulus in
$\partial N(L')$, contradicting the fact that $A$ is not
boundary parallel in $M \setminus N(L')$.

Let $E$ be a connected component of $A \cap \cB$, which necessarily is
a disk. Next, we show that there are two possibilities for $E$ up to
isotopy and switching the roles of $\g_1$ and $\g_2$. These two
possibilities are depicted in Figure~\ref{disksII}.

\begin{figure}[htpb]
\begin{center}
\includegraphics[width=0.9\textwidth]{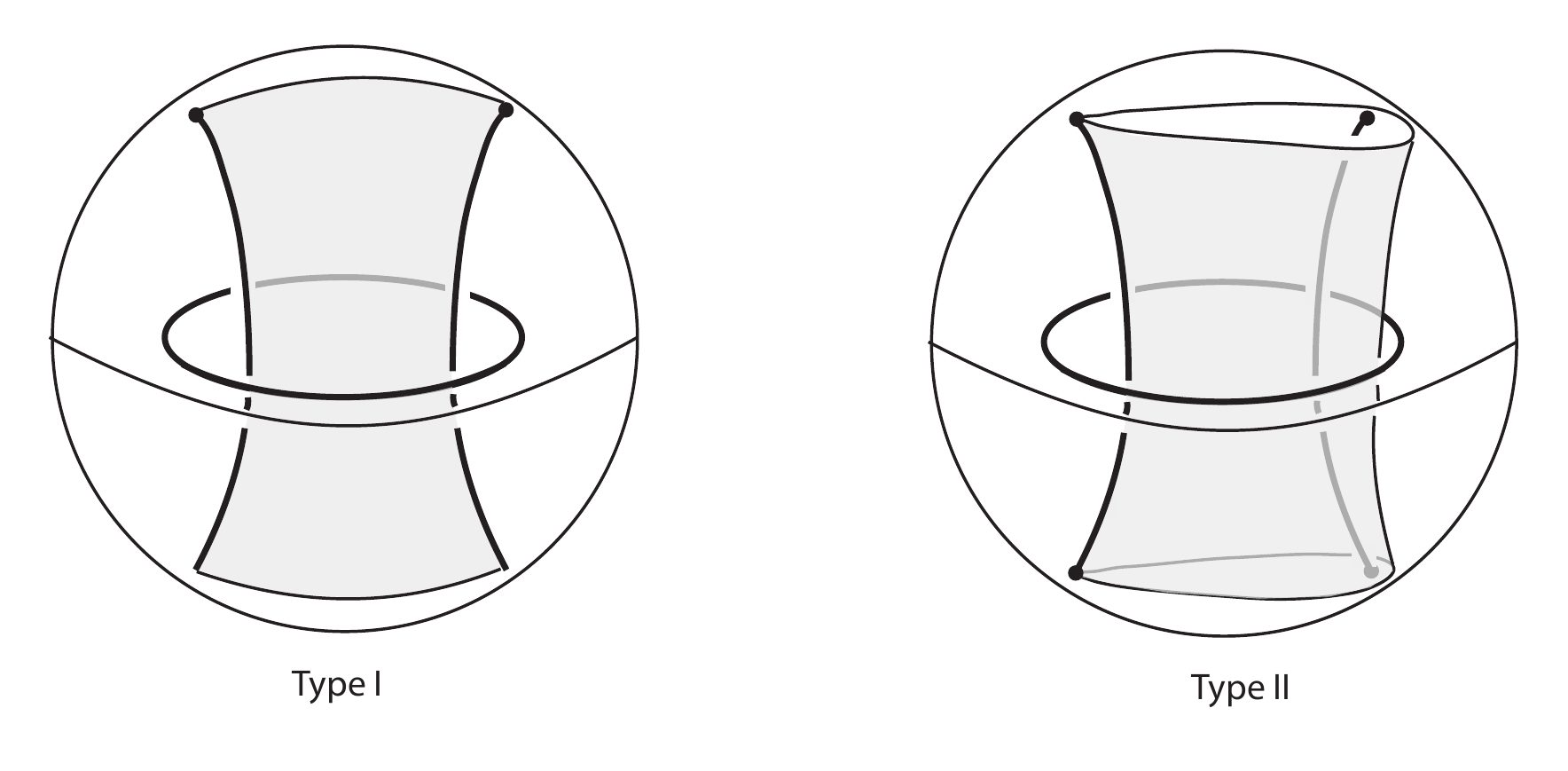}
\caption{Possibilities for $E$, a connected component of
$A \cap \cB$ when all intersections of $ A\cap \cQ$ are arcs.}
\label{disksII}
\end{center}
\end{figure}

Let $\partial E = \be_1 \cup \mu_1 \cup \be_2 \cup \mu_2$ where
$\be_1$ and $\be_2$ lie in $\cQ$ and $\mu_1$ and $\mu_2$ lie
in $\partial N(L')$. Note that each of $\mu_1$ and $\mu_2$ must
begin and end at distinct components of $\partial N(\G)\cap \cQ$,
since otherwise we could lower the number of intersection curves
of $A$ with $\cQ$.

For the arguments that follow, we set coordinates and consider
$\cB = \{(x,y,z)\in \rth\mid x^2+y^2+z^2\leq1\}$,
$D$ a horizontal disk in $\{z = 0\}$ and
the two arcs $\g_1,\,\g_2$ parallel to the $z$-axis.
Let $A'$ be the annular connected component of
$(\cB\setminus N(C))\cap \{z=0\}$. Then, $A'$ is annulus with
one boundary component in $\cQ$ and the other boundary component a
longitude on $\partial N(C)$.

We assume that we have isotoped $E$ in $\cB\setminus N(L')$ to
minimize the number of intersection curves in $E\cap A'$, and
next we prove that $E\cap A'= \es$.
First, we claim that $E\cap A'$ does not contain any arc. Indeed,
if there were an arc $\phi\subset E\cap A'$, 
since $\partial E\cap \partial N(C) = \es$, $\phi$ would cut a
disk $H_1$ from $A'$ and a disk $H_2$ from $E$.
Let $H_3 = H_1 \cup H_2$.
If $\phi$ has both endpoints in the same $\be_i$, then
$\partial H_3 \subset \cQ$, which, by incompressibility of $\cQ$,
implies that $\partial H_3$ is a trivial curve bounding a disk
$H_4 \subset \cQ$. Then $H_3 \cup H_4$ is a sphere bounding a ball,
through which we can isotope $H_1$ through $H_2$, and lower the number
of intersection curves in $E \cap A'$, a contradiction.

If $\phi$ has one endpoint in $\be_1$ and the other in $\be_2$, then
$\partial H_3$ consists of one arc in $\cQ$ and one 
taut arc on $\partial N(\g_i)$.
Then, we can use $H_3$ to isotope $\g_i$ to $\cQ$, a contradiction to
the fact that $C$ links $\g_i$ in $\cB$. So $E\cap A'$ can only contain
simple closed curves.

If $\phi \subset E\cap A'$ is a simple closed curve, then
there is a disk $E'\subset E$ with $\partial E' =\phi$.
In particular, $\phi$ is nontrivial in $A'$, since otherwise we could
use a disk replacement argument to isotope $E$ removing $\phi$ from
$E\cap A'$. Since $\phi$ is isotopic to $C$ through $A'$, we could
obtain a disk in $Y$ with nontrivial boundary in $\partial N(C)$, a
contradiction. Thus, $A' \cap E = \es$.

If $\mu_1$ and $\mu_2$ lie on $\partial N(\g_1)$ and $\partial N(\g_2)$
respectively, then by an isotopy on
$\cQ \cup \partial N(\g_1 \cup \g_2)$, we can assume that $\mu_1$ and
$\mu_2$ are vertical arcs that do not wind around $\partial N(\g_1)$ or
$\partial N(\g_2)$. Then because $\be_1$ and $\be_2$ cannot cross the
equator $\partial A'\cap \cQ$, after possibly reindexing, $\be_1$
connects the top two punctures of $\cQ$ and $\be_2$ connects the bottom
two punctures. Since $\partial E$ must be trivial as an element of the
fundamental group of the handlebody $\cB \setminus N(\g_1 \cup \g_2)$,
there can be no twisting around the punctures, and $E$ must appear as
in Figure~\ref{disksII}~(a).

If $\mu_1$ and $\mu_2$ both lie on $\partial N(\g_1)$, then $\be_1$ and
$\be_2$ are loops on $\cQ$ based at a puncture and restricted to the
upper and lower hemisphere. Since no arcs in $A\cap\cQ$ can cut disks
off $\cQ$, then each $\be_1$ and $\be_2$ circle a puncture in $\cQ$.
Hence, $E$ must appear as in Figure~\ref{disksII}~(b).
A similar case occurs when $\mu_1$ and $\mu_2$ both lie on $\partial N(\g_2)$.

This argument allows us to introduce the following language.
If $\be \subset A\cap \cQ$ is any arc, then there is a unique
disk $E\subset A\cap \cB$ with $\be\subset \partial E$. If
$E$ is a Type I disk, we shall say that $\be$ is a Type I
arc. Otherwise, we will say that $\be$ is a Type II arc.

Next, we show that all intersection arcs in $A\cap \cQ$ are of the same type.
If $\G_1 \neq \G_2$, then, if there
exists a Type I disk, the two boundaries of $A$ are on different
components and only Type I disks can occur. 
If there is not a Type I disk, then all disks are Type II.
On the other hand,
if $\G_1 =  \G_2$, both components of $\partial A$
intersect $\partial N(\g_1)$ and $\partial N(\g_2)$ the same equal number of
times. In this case, the existence of a Type II disk $E_1$
with boundary intersecting $\partial N(\g_1)$ in two arcs,
implies that there exists a Type II disk $E_2$
intersecting $\partial N(\g_2)$. But $E_1$ and $E_2$
would then intersect, a contradiction.

%Note that if $\be_1$ is of a given type,
%then $\be_2$ must be the same type. Moreover, all
%intersection arcs in $A \cap \cQ$ are of the same type.
%This is true because if $\G_1 \neq \G_2$, then if there
%exists a Type I disk, the two boundaries of $A$ are on different
%components and that can occur only for Type I disks. So all disks are Type I.
%If there is not a Type I disk, then  all disks are Type II.
%
%If $\G_1 =  \G_2$, then in the presence of a Type II disk such that $\mu_1$
%and $\mu_2$ both lie on $\partial N(\g_1)$, both components of $\partial A$
%intersect $\partial N(\g_1)$ and $\partial N(\g_2)$ the same equal number of
%times. In order for them to pair up as part of the boundaries of the disks
%in $A \cap \cB$, there would need to be a Type II disk such that $\mu_1$
%and $\mu_2$ both lie on $\partial N(\g_2)$. But these two Type II disks
%would then intersect, a contradiction. So in this case, all disks are
%Type I.

Assume that all arcs in $A\cap \cQ$ are of Type I
and let $E$ be a connected component of $A\setminus \cB$. Then,
when we switch from $L'$ to $L$, $E$ can be extended to a 
disk properly embedded
in $M \setminus N(L)$. Thus, there is a trivial component in $L$, a
contradiction to its hyperbolicity.

Our next argument eliminates the last case when all intersections of
$A \cap \cQ$ are of Type II, and $\G_1 \neq \G_2$,
since we cannot mix the two types of Type II
intersections. Until the end of the proof we will assume that
$\partial A \subset \partial N(\G_1)$. Let $E\subset A$ be a
connected component of $A\setminus \cB$, and we label
$\partial E = \be_1\cup\mu_1\cup\be_2\cup\mu_2$, where $\be_1$ and
$\be_2$ lie in $\cQ$ and $\mu_1$ and $\mu_2$ lie in $\partial N(\G_1)$.
Then, $\mu_1$ and $\mu_2$ define two disks $\Delta,\wt{\Delta}$ in the
annulus $\partial N(\G_1)\setminus\cB$. We assume that the disk
$\Delta$ is the one that makes $\wt{A} = E\cup \Delta$ an annulus in
$Y\setminus \cB$ with both boundary components in $\cQ$ parallel to the
punctures that come from $\G_2$.

After capping $\wt{A}$ with the two once-punctured disks bounded by
$\partial \wt{A}$ in $\cQ$, we create an incompressible annulus
$\wh{A}$ in $Y\setminus \cB$ which also lives and is incompressible in
$X\setminus \cB$. Since $X$ is hyperbolic, it follows that $\wh{A}$
must be boundary-parallel to $\G_2$. But this implies that $\mu_1$ is
parallel in $X\setminus \cB$ to the arc $j_2 = \G_2\setminus \cB$,
and there exists a disk $E'\subset X\setminus \cB$ with
$\partial E' = \mu_1\cup \nu_1\cup j_2\cup \nu_2$, where
$\nu_1\cup \nu_2 = E'\cap \cQ$ are two arcs joining the respective
two upper punctures and the two lower punctures of $\cQ$ which avoid
the equator of $\cQ$. It then follows that $\nu_1\cup g$ and
$\nu_2\cup g'$ bound two respective disks in $\cB\setminus L$,
and the union of those disks with $E'$ is an essential disk
in $M\setminus L$, contradicting hyperbolicity of $X$ and
finishing the proof of Claim~\ref{cl:annuli}.
\end{proof}

\begin{claim}\label{clnotori2}
$Y$ does not admit essential tori.
\end{claim}
\begin{proof}
We argue by contradiction and
suppose that $T$ is a torus which is
incompressible and not boundary-parallel in $Y$.
First, suppose that $T \cap D =\es$. Then, after an
isotopy in $Y$,
we may assume that $T\cap \cB = \es$. Hence,
$T\subset X$ and, since $X$ is hyperbolic,
$T$ is either compressible or
boundary parallel in $X$.
If $T$ is boundary parallel, since both $G$ and $G'$ intersect
$\cB$, $T$ must be parallel to a component $J$ of $L$ which lives
in $L'$, contradicting that $T$ is essential in $Y$.

Next, we treat the case where $T$ is compressible in $X$;
let $E\subset X$ be a compression disk for $T$
and assume that $E$ has the least number of intersection curves with
$\cQ$ among compression disks for $T$.
Since $T$ is incompressible in $Y$,
$E$ intersects $\cB\cap L'$ and
the arc $\a$, which is a complete geodesic in the hyperbolic metric of $X$.
Let $\ol{N}(E)\subset X$
be a closed neighborhood of $E$ with
coordinates $E\times [0,1]$ and such that
$(\partial E \times [0,1])\subset T$.
Since $\a$ is transverse to $E$,
we may choose such a coordinate system on $\ol{N}(E)$
in such a way that,
for each $t\in[0,1]$, each component of $\a\cap \ol{N}(E)$
intersects $E\times \{t\}$ transversely
in a single point.

Let $S= (T\setminus (\partial E\times [0,1]))\cup (E\times\{0\})
\cup (E\times\{1\})$. Then, $S$ is a sphere in $X$
and $T\setminus S = \partial E\times[0,1]$.
Since $X$ is hyperbolic, $S$ separates
and must bound a closed ball
$B \subset X$.
Let $\a_1,\,\a_2,\,\ldots,\,\a_n$ be the arcs
in $\a\cap \ol{N}(E)$. We claim that each $\a_i$ is contained in $B$.
This follows because
the endpoints of $\a$ are in $L$, $L\cap B = \es$ and
$T\cap \cB = \es$. In particular, $\ol{N}(E) \subset B$.

\begin{figure}[ht]
\centering
\includegraphics[width=0.8\textwidth]{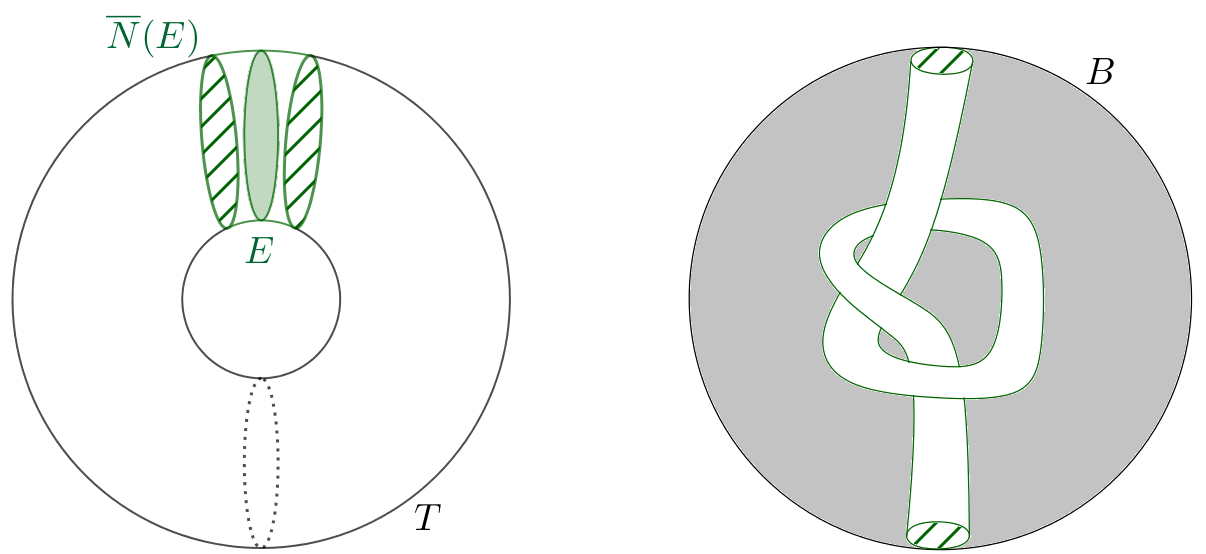}
\caption{
The figure on the left depicts the compressing disk
$E$ for the torus $T$ and the neighborhood $\ol{N}(E)$.
The figure on the
right shows the sphere $S$ bounding the ball $B$
and the (possibly knotted)
region $\ol{N}(E)\subset B$,
which defines the highlighted knot exterior
$W = B\setminus \ol{N}(E)$.}
\label{knotext}
\end{figure}

Let $W= B\setminus\ol{N}(E)$. Then
$\partial W = T$ and $W$ is a knot exterior in $B$
bounded by $T$ (in fact, we think of $W$ as obtained from $B$ by
removing a potentially knotted hole, see Figure~\ref{knotext}).
Since $T\cap L'= \es$ and $L'$ intersects $\ol{N}(E)$, then
$W \subset Y$.
Our next argument is to show that $\ol{N}(E)$ is unknotted in $B$;
thus $W$ is a solid torus bounded by $T$, which
contradicts the essentiality of $T$ in $Y$.

Let $\Pi\colon \hn3\to X$ be the Riemannian universal
covering map of $X$.
By appropriately choosing a neighborhood $N(L)$,
it follows that
$\Pi^{-1}(\partial N(L))$ is a
collection of horospheres in $\hn3$. Moreover,
$\Pi^{-1}(\a)$ is a collection of geodesics connecting
these horospheres. On the other hand,
$B$ lifts to a collection of balls, one of which is a ball
$\widetilde{B}$, containing a lift of $W$, denoted $\widetilde{W}$.

In order for $T$ to be incompressible in
$Y$, a lift of $\alpha$, which we denote by
$\widetilde{\alpha}$, must
pass through the hole $\wt{B}\setminus \wt{W}$ in $\wt{B}$.
Since $\widetilde{\alpha}$ is a
geodesic in $\mathbb{H}^3$, it follows that $\wt{\a}$ is unknotted,
which implies that $\wt{W}$ is a solid torus. Since $W$ is homeomorphic
to $\wt{W}$, this gives a contradiction, as previously explained.

It remains to prove that there are no essential tori in
$Y$ which intersect $D$.
Arguing by contradiction, assume that
$T$ is such a torus.
Since $D$ is incompressible in $Y$,
a disk replacement argument allows us to further
assume that any curve in $T\cap D$
is nontrivial both in $T$ and in $D$.
Let $\be$ be a curve in $T\cap D$.
If $\be$ encircles one puncture of $D$, take an innermost
curve in such intersection and use the one-punctured disk
it bounds in $D$ to surger $T$ and obtain an essential annulus
in $Y$ with boundary in $\partial N(\G_i)$.
If $\be$ encircles both punctures of $D$,
take an outermost curve on $T\cap D$ and use the outer annulus
on $D$ to surger $T$ and obtain an essential annulus with boundary
in $\partial N(C)$. Since Claim~\ref{cl:annuli}
gives that $Y$ does not admit essential annuli, this proves
Claim~\ref{clnotori2}.
\end{proof}

Thus, having proved that $Y$ satisfies Thurston's hyperbolicity
conditions, Theorem~\ref{switch} follows when $M$ is orientable.

Next, we assume that $M$ is nonorientable and that $L,\,L',\,\a$ and
$\cB$ are as before. Let $\Pi\colon \wh{M}\to M$ be the two-sheeted
oriented covering map of $M$. Then,
$\wh{L} = \Pi^{-1}(L)$ is a hyperbolic link in $\wh{M}$. We also
let $\wh{L}''=\Pi^{-1}(L')$, $\cB_1,\,\cB_2$ be the connected
components of $\Pi^{-1}(\cB)$ and $\a_1,\,\a_2$ be the connected
components of $\Pi^{-1}(\a)$. We claim that
$\wh{Y} = \wh{M}\setminus \wh{L}''$ is also hyperbolic.
Note that, as explained
in the proof of the nonorientable setting for the chain move,
the fact that $\wh{Y}$ is hyperbolic implies that
$Y = M\setminus L'$ is hyperbolic.

Since $\a$ is a complete geodesic in the hyperbolic metric of
$M\setminus L$, then both $\a_1$ and $\a_2$ are complete geodesics in
$\wh{M}\setminus \wh{L}$. In particular, since $\wh{M}$ is
orientable, the switch move allows us to
replace $\wh{L}\cap \cB_1$ by a tangle diffeomorphic
to $L'\cap \cB$ to create a new hyperbolic link $\wh{L}'$ in $\wh{M}$.
Note that $\wh{L}''$ may be obtained from $\wh{L}'$ by replacing
the tangle $\wh{L}'\cap \cB_2 = \wh{L}\cap \cB_2$ by a
tangle diffeomorphic to $L\cap \cB$.

Since it might be the case
that $\a_2$ is not isotopic to a geodesic in the hyperbolic metric
of $\wh{X} = \wh{M}\setminus \wh{L}'$, one cannot directly
apply the switch move
a second time. However, most of the arguments in its proof can
be repeated without change for this setting. We next
guide the reader over the steps in the proof that need some
adaptation.

First, the arguments
in the proof of the orientable case for the switch move
can be used to prove that the four-punctured sphere
$\cQ_1 = \partial \cB_1\setminus \wh{L}''$ is incompressible and
boundary-incompressible in $\wh{X}$ and in
$\wh{Y} = \wh{M}\setminus \wh{L}''$, that
$\cQ_2 = \partial \cB_2\setminus \wh{L}''$ is
incompressible and boundary-incompressible in
$\wh{Y}$ and that $\wh{Y}$
does not admit any essential disks and essential spheres.

To prove that $\wh{Y}$ does not admit any essential annuli,
the arguments in Claim~\ref{cl:annuli} apply to
show that if $A$ is an essential annulus in $\wh{Y}$, then
both boundary components of $A$ are meridians in
a component $\wh{G}'$ of $\wh{L}'$ that
intersects $\cB_2$ and that
we may isotope $A$ in $\wh{Y}$ to assume that
$A\cap \cB_2 = \es$.
Since $\wh{X}\setminus \cB_2 = \wh{Y}\setminus \cB_2$,
then $A$ is an incompressible annulus in $\wh{X}$,
and $A$ must
be boundary-parallel in $ \wh{M}\setminus N(\wh{L}')$.
In particular, after an isotopy in $\wh{Y}$ that does
not change the property $A\cap \cB_2 = \es$,
we may assume that $\partial A \cap \cB_1 = \es$ and
that if $A$ intersects $\cB_1$, then each connected component
of $A\cap \cB_1$ is an annulus parallel to one of the
two arcs in the tangle $\wh{L}'\cap \cB_1$.

If $A\cap \cB_1 = \es$, $A$ is an incompressible annulus
in the hyperbolic manifold $\wh{M}\setminus \wh{L}$,
and the same arguments in the proof of Claim~\ref{cl:annuli}
apply to show that the neighborhood
through which $A$ is parallel to an annulus $A'$
in $\partial N(\wh{L})$ can be capped off by
meridianal disks to define a ball $W'$ in $\wh{M}$
that contains both $\cB_1$ and $\cB_2$ and may be used to homotope
the arcs $\a_1$ and $\a_2$ to $\partial N(\wh{L})$, a
contradiction with the fact that both $\a_1$ and $\a_2$
are geodesics in the hyperbolic
metric of $\wh{M}\setminus \wh{L}$.

Hence, there must exist 
$A_0$ a connected component of $A\cap \cB_1$. We
assume that $A_0$ is innermost in the sense that
no other component of $A\cap \cB_1$ lies in the ball region
defined by $A_0$ in $\cB_1$.
Then, each boundary component of $A_0$ is a curve in $\cQ_1$
that encircles one puncture, defining a
once-punctured disk in $\cQ_1$. Using these
two once-punctured disks to surger $A$
gives two incompressible annuli in $\wh{M}\setminus N(\wh{L}')$,
both disjoint from $\cB_2$ and at least one of them must be
essential in $\wh{Y}$. By induction on the number
of components in $A\cap \cB_1$, this argument
yields an essential annulus $\wh{A}$ in $\wh{Y}$, with both boundary
components being meridians, and that is disjoint both from
$\cB_1$ and from $\cB_2$. As already shown, this is a contradiction
that proves that $\wh{Y}$ does not admit any essential annuli.

The proof that $\wh{Y}$ does not admit any essential tori uses the
arguments in Claim~\ref{clnotori2}. Among all possible
essential tori, the only case
that still needs an adaptation is when $V$ is
an essential torus in $\wh{Y}$ that can
be isotoped to be disjoint from $\cB_2$.
Let $D_1$ be a twice punctured disk
in $\cB_1\setminus \wh{L}'$ bounded by the trivial
component $\wh{C}_1$ of $\wh{L}'\cap \cB_1$. Then,
$D_1$ is incompressible and we may isotope $V$ in $\wh{Y}$
to assume that there are no trivial curves in $V\cap D_1$.
Hence, $V\cap D_1 = \es$, since the existence of a nontrivial
curve in $V\cap D_1$
allows us to surger $V$ to produce an essential annulus in
$\wh{Y}$, which we already proved that cannot exist.
In particular, $V$ can also be isotoped in $\wh{Y}$ to be disjoint
from $\cB_1$, and then $V$ is a torus in the
hyperbolic manifold $\wh{M}\setminus \wh{L}$.
Since $V$ cannot be boundary parallel in $\wh{M}\setminus \wh{L}$,
there exists a compressing disk $E$ for $V$
in $\wh{M}\setminus \wh{L}$, and the fact that $V$ is incompressible
in $\wh{Y}$ implies that $E$ must necessarily intersect the arcs
$\a_1$ and $\a_2$, which are geodesics in the hyperbolic metric of
$\wh{M}\setminus \wh{L}$. Now, the same arguments in the proof
of Claim~\ref{clnotori2} apply to show that $V$ bounds a unknotted
solid region $W$ in $\wh{Y}$, contradicting the fact that $V$
is essential in $\wh{Y}$.
This argument finishes the proof that $\wh{Y}$ satisfies
Thurston's hyperbolicity conditions and,
as already explained,
proves the Switch Move Theorem for the nonorientable case.
\end{proof}

\section{The Switch Move Gluing Operation.}
\label{secGlue}

We describe in Theorem~\ref{thmoperation} below
a method to obtain new hyperbolic 3-manifolds
of finite volume from two previously given ones;
this method uses a variant of the switch move (Theorem~\ref{switch}).
Before stating this result, we set the notation.
Let $M_1,\,M_2$ be compact 3-manifolds with nonempty boundary with
zero Euler characteristic.
Let $L_1\subset M_1$ and $L_2\subset M_2$ be links
such that int$(M_1\setminus L_1)$ and
int$(M_2\setminus L_2)$ admit hyperbolic metrics
of finite volume.
Let $T_1\subset \partial M_1$ and $T_2\subset \partial M_2$
be boundary components that are either both tori or both Klein
bottles. For each $i\in \{1,2\}$, let $\a_i$
be a complete geodesic in the hyperbolic metric of
int$(M_i\setminus L_i)$ with one endpoint in $T_i$
and another endpoint in a component $J_i$ of $L_i$.

Choose a gluing diffeomorphism $\phi\colon T_1\to T_2$ which
maps the endpoint of $\a_1$ in $T_1$ to the endpoint of $\a_2$ in $T_2$ and
let $M$ be the quotient manifold obtained by gluing $M_1$ and $M_2$
along $\phi$. Then, we may consider
$M_1$ and $M_2$ as subsets of $M$, which are separated by
the compact surface $T$ that comes from the identification
of $T_1$ and $T_2$.

\begin{theorem}[Switch Move Gluing Operation]\label{thmoperation}
With the above notation, let $\a$ be the concatenation of
$\a_1$ and $\a_2^{-1}$ in $M$.
Let $\cB$ be a ball neighborhood of $\a$ in $M$ that intersects
$L = L_1\cup L_2$ in two arcs $g,g'$
as in Figure~\ref{switchbefore} and intersects
$T$ in a disk $\Delta$. Let $L'$ be the resulting link
in $M$ obtained by replacing $g\cup g'$ by the tangle
$\g_1\cup\g_2\cup C$ as in Figure~\ref{figaugmented}~(b), where
$C\subset \Delta$. Then, the manifold $Y = M\setminus L'$ is hyperbolic.
\end{theorem}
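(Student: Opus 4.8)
The plan is to reduce to the orientable case and then verify Thurston's four conditions for $Y = M\setminus L'$ by adapting the proof of Theorem~\ref{switch}, replacing each appeal to the hyperbolicity of the ambient complement by the corresponding structural property of $X = M\setminus L$. When $T$ is a Klein bottle, $M$ is forced to be nonorientable, and as in the nonorientable arguments for the earlier theorems I would pass to the oriented double cover $\wh{M}\to M$, in which the Klein bottle lifts to a torus, prove hyperbolicity equivariantly upstairs, and descend the metric by Mostow--Prasad rigidity; so assume $M$ is orientable. Write $X_i = \mathrm{int}(M_i\setminus L_i)$, so $X = X_1\cup_T X_2$, where $T$ is the separating torus coming from the identification of $T_1$ and $T_2$. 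Since each cusp of a finite-volume hyperbolic manifold is an incompressible torus, $T$ is incompressible in $X$, and gluing the irreducible, boundary-irreducible pieces $X_1,X_2$ along $T$ makes $X$ itself irreducible and boundary-irreducible. The first preparatory lemma I would record is that, because $X_1$ and $X_2$ are atoroidal and anannular, a standard cut-and-paste argument (minimize intersection with $T$, then use incompressibility of $T$ and irreducibility of $X$) shows that $X$ contains no essential annuli and that its only essential torus, up to isotopy, is $T$. Note also that $M\setminus \cB$ contains $T$ minus a disk and so is not a rational tangle in a ball, so only the generic case of Theorem~\ref{switch} is relevant. These properties of $X$---irreducible, boundary-irreducible, anannular, and atoroidal apart from $T$---are exactly what the proof of Theorem~\ref{switch} extracted from hyperbolicity, with the single new feature that $T$ survives.

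With this in place the incompressibility statements go through essentially verbatim. Using $Y\setminus\cB = X\setminus\cB$ together with the local linking arguments inside $\cB$ as in Claim~\ref{Qincomp}, I would show that the four-punctured sphere $\cQ = \partial\cB\setminus L'$ is incompressible and boundary-incompressible in both $X$ and $Y$, and that the twice-punctured disk $D$ bounded by $C$ is incompressible and boundary-incompressible in $Y$. The only change from Theorem~\ref{switch} is that a compression disk pushed out of $\cB$ now lands in one of the two hyperbolic pieces: the circle $\partial\Delta$ on $\cS$ separates the four punctures of $\cQ$ into the pair coming from $J_1$ and the pair coming from $J_2$, so the contradiction is drawn inside the relevant $X_i$. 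The absence of essential spheres and disks in $Y$ then follows as before from irreducibility and boundary-irreducibility of $X$, since such a surface is isotoped off $\cQ$ and either lies in $\cB$, where the linking of $C$ with $\g_1,\g_2$ forbids it, or lies in $X$, where it bounds a ball or caps off to a sphere bounding a ball.

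For essential annuli I would run the case analysis of Claim~\ref{cl:annuli} with two simplifications. Because $J_1\subset M_1$ and $J_2\subset M_2$ lie in different pieces, the components $G = J_1$ and $G' = J_2$ are always distinct, so every subcase of Claim~\ref{cl:annuli} assuming $G = G'$ is vacuous. The step in which an annulus disjoint from $\cB$ is pushed into the ambient complement still concludes that it compresses or is boundary-parallel there, but now because $X$ is anannular rather than hyperbolic; and the one place where the original proof homotopes $\a$ into $\partial N(J)$ to reach a contradiction is replaced by the fact that $\a = \a_1\ast\a_2^{-1}$ restricts to the geodesics $\a_1,\a_2$ of $X_1,X_2$, neither of which can be homotoped rel endpoints into $\partial N(J_i)\cup T_i$. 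The surviving global Type II configuration is eliminated exactly as before by producing an essential disk in $X$, which now contradicts boundary-irreducibility of $X$.

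The torus argument is where the new geometry enters and is the main obstacle. Following Claim~\ref{clnotori2}, an essential torus in $Y$ meeting $D$ is surgered along $D$ into an essential annulus, contradicting the previous step, so I would focus on an essential torus $V\subset Y$ disjoint from $D$, which is isotoped off $\cB$ and hence into $X$. If $V$ were incompressible and not boundary-parallel in $X$ it would be isotopic to $T$; but this is impossible, since $\g_1$ and $\g_2$ run from $M_1$ to $M_2$ and therefore puncture every torus parallel to $T$, whereas $V$ is disjoint from $L'$. A boundary-parallel $V$ is excluded as in Claim~\ref{clnotori2} because $G$ and $G'$ meet $\cB$. The remaining possibility is that $V$ compresses in $X$, and here I would reuse the universal-cover argument, but applied to the two pieces separately: the ball $B$ bounded by the sphere built from a compressing disk is split by $T$ into a part in $M_1$ and a part in $M_2$, the sub-arc of $\a$ in each part is the geodesic $\a_i$, which lifts to an unknotted geodesic in the universal cover $\hn3$ of $X_i$, and since $\a$ crosses $T$ exactly once the whole arc is unknotted in $B$. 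Thus $V$ bounds an unknotted solid torus and is not essential in $Y$, a contradiction. Having ruled out essential spheres, disks, annuli, and tori, Thurston's criterion gives that $Y$ is hyperbolic. The delicate point throughout is confirming that the structural substitutes for hyperbolicity of $X$---especially the classification of its essential tori and the unknottedness argument carried out one side of $T$ at a time---really do suffice to replace the single-manifold hyperbolic reasoning of Theorem~\ref{switch}.
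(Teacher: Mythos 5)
Most of your proposal tracks the paper's proof closely and correctly: your preparatory lemma is exactly the paper's Claim~\ref{clalmosthyp}, the incompressibility of $\cQ$ and of the twice-punctured disk $D$ and the exclusion of essential spheres and disks go through as you say, and the annulus case is handled exactly as in the paper, by observing that $g$ and $g'$ lie on distinct components of $L$ so the $G = G'$ subcases of Claim~\ref{cl:annuli} are vacuous. The genuine gap is in your treatment of essential tori, specifically the case where the torus $V$ (isotoped off $\cB$, hence into $X$) is compressible in $X$. Your plan is to run the universal-cover unknottedness argument of Claim~\ref{clnotori2} ``one side of $T$ at a time'': split the ball $B$ along $T$, note that each sub-arc of $\a$ is a geodesic $\a_i$ of the hyperbolic piece int$(M_i\setminus L_i)$, and conclude that ``since $\a$ crosses $T$ exactly once the whole arc is unknotted in $B$.'' This step does not work as stated. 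Because you never show $V\cap T = \es$, the torus $V$, the compressing disk $E$, the sphere $S$, and the ball $B$ may all cross $T$; then $T\cap B$ need not be a single disk, the pieces $B\cap M_i$ need not be balls, and there is no global $\hn{3}$ universal cover of $X$ in which lifts of $\a$ are geodesics. Unknottedness of a concatenated arc is not implied by each sub-arc being a geodesic in its own piece together with the fact that $\a$ meets $T$ once; supplying that would require precisely the cut-and-paste control over $T\cap B$ that your sketch omits.

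The paper avoids this entirely by inserting the step your proposal is missing: after isotoping an essential torus $V$ (chosen to minimize $|V\cap T|$) off $\cB$, it proves $V\cap T=\es$. Indeed, a curve of $V\cap T$ is disjoint from $D$; if it is nontrivial in $T$, then some component of $V\setminus T$ is an essential annulus in one of the hyperbolic pieces, a contradiction; if it is trivial in $T$, it cobounds with $C$ an annulus in $T\setminus D$, and surgering $V$ along that annulus produces an essential annulus in $Y$, again a contradiction. Once $V\cap T=\es$, the separating torus $T$ confines $V$ to a single hyperbolic piece $M_i\setminus L_i$; there $V$ can be neither essential nor boundary-parallel, so it compresses inside $M_i\setminus L_i$, the compressing disk must be crossed by the single geodesic $\a_i$, and the universal-cover argument of Claim~\ref{clnotori2} applies verbatim inside that one hyperbolic manifold. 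Your argument becomes correct if you add this $V\cap T=\es$ step and then localize the compressibility analysis to one piece, rather than attempting to propagate unknottedness across $T$.
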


After choosing $\phi$ as above,
like in the case of the switch move, the operation
described above may yield two distinct
hyperbolic 3-manifolds depending on the projection of the
strands $g,g'$, see Remark~\ref{notwelldefined}.

\begin{proof}[Proof of Theorem~\ref{thmoperation}]
We first prove the theorem in the case when $M$ is
orientable.
Let $X = M\setminus L$. Then, the setting in
Theorem~\ref{thmoperation} is the same as in the
Switch Move Theorem, Theorem~\ref{switch}, with the exception
that $X$ is no longer hyperbolic. However, $X$ is close to being
hyperbolic in the following sense:

\begin{claim}\label{clalmosthyp}
$X$ does not admit any essential spheres, essential disks and
essential annuli. Moreover, any essential torus in $X$ is isotopic
to $T$.
\end{claim}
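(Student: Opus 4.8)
The plan is to exploit that $T$ is an incompressible torus in $X$ and that each of the two pieces $X_1=\mathrm{int}(M_1\setminus L_1)$ and $X_2=\mathrm{int}(M_2\setminus L_2)$ is hyperbolic of finite volume, hence by Thurston's criterion (Section~\ref{secprelim}) admits no essential spheres, disks, tori or annuli. First I would record that $T$ is incompressible in $X$: each $T_i$ is a cusp cross-section of $X_i$, so a compressing disk for $T_i$ would be an essential disk in the simple piece $X_i$, which cannot exist; since this incompressibility holds on both sides of the separating surface, $T$ is incompressible in $X$. Throughout, the strategy is the standard one of placing a candidate essential surface $F$ in minimal position with respect to $T$ (minimizing $|F\cap T|$ in the isotopy class) and then analyzing the pieces $F\cap X_1$ and $F\cap X_2$, each properly embedded in a simple piece. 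Note that $L$ is disjoint from $T$, so $\partial X$ is disjoint from $T$ and every $F\cap T$ consists of simple closed curves only.

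For spheres and disks this is quick. Let $S\subset X$ be an essential sphere with $|S\cap T|$ minimal. Any circle of $S\cap T$ bounds a disk on $S$, which would compress the incompressible $T$ unless the circle also bounds a disk on $T$; an innermost surgery along such a disk produces spheres with fewer intersection circles, one of which is still essential, contradicting minimality. Hence $S\cap T=\es$, so $S\subset X_i$ for some $i$, and irreducibility of the hyperbolic piece forces $S$ to bound a ball, a contradiction. The same innermost-circle reduction lets me take any essential disk disjoint from $T$; it then lies in a single $X_i$ and contradicts boundary-irreducibility of that piece.

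The annulus case is where the real work lies, and I expect it to be the main obstacle. Let $A\subset X$ be an essential annulus in minimal position; since $\partial A\subset\partial X$ is disjoint from $T$, the set $A\cap T$ is a family of circles, and removing trivial ones as above I may assume every circle of $A\cap T$ is essential (core-parallel) on $A$. These circles cut $A$ into subannuli, each properly embedded in a single $X_i$ with essential boundary on $T\cup\partial X$. As each $X_i$ is anannular, every subannulus is compressible or boundary-parallel in $X_i$; a compression would yield either a compressing disk for $T$ or an essential disk in $X_i$, both impossible, so each subannulus is boundary-parallel. The key observation is that a boundary-parallel annulus is parallel to an annulus contained in a \emph{single} boundary component, so its two boundary circles must lie on the same component of $\partial X_i$. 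A ``middle'' subannulus has both boundary circles on $T$, hence is parallel to an annulus in $T$ and can be pushed across $T$ to lower $|A\cap T|$, contradicting minimality; so there are no middle subannuli and $|A\cap T|\le 1$. If $|A\cap T|=1$, each of the two subannuli has one boundary circle on $T$ and one on $\partial X$, two distinct components, contradicting boundary-parallelism; and if $A\cap T=\es$, then $A$ lies in one $X_i$ and, being boundary-parallel there but with $\partial A\not\subset T$, is parallel into $\partial X$ and hence boundary-parallel in $X$. In every case essentiality fails, so $X$ has no essential annuli.

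Finally, let $V\subset X$ be an essential (hence incompressible) torus in minimal position. Incompressibility of both $V$ and $T$ removes all trivial circles of $V\cap T$, after which the remaining circles are essential and mutually parallel on $V$, cutting $V$ into annuli whose boundary circles all lie on $T$. Exactly as in the annulus case each such annulus is incompressible, hence boundary-parallel into $T$; an outermost analysis of the subannuli of $T$ cut out by $V\cap T$ then either produces an isotopy lowering $|V\cap T|$, contradicting minimality, or exhibits $V$ as isotopic to $T$. If instead $V\cap T=\es$, then $V$ lies in one $X_i$ and, by atoroidality of the hyperbolic piece, is parallel to a boundary component of $X_i$; parallelism to any component of $\partial X$ would make $V$ boundary-parallel in $X$ and contradict essentiality, so $V$ must be parallel to $T$, i.e. isotopic to $T$. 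Thus the only essential torus in $X$ up to isotopy is $T$, completing the claim. The delicate points, as flagged, are the boundary-parallelism bookkeeping for annuli that meet $T$ and the outermost argument forcing an essential torus meeting $T$ to coincide with $T$.
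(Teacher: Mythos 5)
Your proof is correct and follows essentially the same route as the paper: put the candidate surface in minimal position with respect to the separating incompressible torus $T$, cut along $T$, and use the absence of essential spheres, disks, annuli and tori in the two hyperbolic pieces to rule out each case (or force isotopy to $T$ in the torus case). The only difference is one of detail, not of method: the paper dispatches the annulus case with ``analogously,'' whereas you carry out the subannulus bookkeeping explicitly.
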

\begin{proof}[Proof of Claim~\ref{clalmosthyp}]
Suppose there were an essential disk $E$ in $M \setminus L$.
Since both int$(M_1 \setminus L_1)$ and int$(M_2 \setminus L_2)$
are hyperbolic,
it follows that $E$ must intersect $T$.
But because $T$ is incompressible and $\partial E$ is disjoint from $T$,
we may replace subdisks in $E$ by disks in $T$
to obtain an essential disk in either
int$(M_1 \setminus L_1)$ or int$(M_2 \setminus L_2)$, a contradiction.
The same argument shows that an essential sphere in $M \setminus L$
would generate an essential sphere in either
int$(M_1 \setminus L_1)$ or
int$(M_2 \setminus L_2)$, also a contradiction.

Since $T$ separates,
an essential torus in $M \setminus L$ that does not intersect
$T$ must be parallel to $T$, and hence isotopic to $T$.
To prove that $T$ is the only possible essential torus up
to isotopy,
we argue by contradiction and assume that
$\wh{T}$ is an essential torus in $M\setminus L$
that is not isotopic to $T$ and has the fewest number of intersection
components with $T$. Then,
any curve in $\wh{T}\cap T$ is nontrivial both
in $T$ and in $\wh{T}$. Then, it follows that
there is a component of $\wh{T}\setminus T$
that is an essential annulus either in
int$(M_1 \setminus L_1)$ or in int$(M_2 \setminus L_2)$, a contradiction.
Analogously, we may show that $M\setminus L$ does not admit any
essential annuli, and this proves Claim~\ref{clalmosthyp}.
\end{proof}

Having proved Claim~\ref{clalmosthyp},
observing that $L\setminus \cB$ is not a rational
tangle in a 3-ball, we notice that
the arguments in the proof of the Switch Move Theorem
apply directly to show that $Y$ does not admit any essential spheres
and any essential disks and that
the four-punctured sphere
$\cQ = \partial \cB\setminus L = \partial \cB\setminus L'$
and the twice-punctured
disk $D$ bounded by $C$ on $T$ are incompressible and
boundary-incompressible in $Y$.

To prove that $Y$ does not admit any essential annuli,
the proof of Claim~\ref{cl:annuli} applies directly, since
the arcs $g$ and $g'$ in $L$ that intersect $\cB$
are on distinct components of $L$.
Hence, to prove Theorem~\ref{thmoperation}
when $M$ is orientable, it remains to show that $Y$
does not admit any essential tori.

We argue by contradiction and assume that $V$ is an essential torus
in $M\setminus L'$ that has the least number of intersection components
with $T$ among all essential tori in $Y$. Then, after assuming
general position, $T\cap V$ is
a finite collection of pairwise disjoint simple closed curves.
Let $\g$ be one of such intersection components. If $\g\subset D$,
then it does not bound a disk in $T$ and either encircles
one or two of the punctures of $D$. Then, we can choose
a component $\g'$ in $V\cap D$
(if $\g$ encircles one puncture, we choose $\g'$ as
an innermost curve, otherwise we choose $\g'$ as an
outermost curve) and surger $T$ to obtain an essential annulus in $Y$,
a contradiction. It then follows that $V\cap D = \es$,
and then $V$ can be isotoped through $Y$
to be disjoint from $\cB$, without increasing the number
of intersection components in $V\cap T$. Hence,
$V$ is a torus that is contained in $X$.

In $X$, $V$ is not isotopic to $T$,
since $V\subset M\setminus L'$ and any torus isotopic to
$T$ is punctured by $L'$. We claim that $V\cap T = \es$. Argue
by contradiction and assume that there exists a
curve $\g$ in $V\cap T$. Then, $\g$ does not intersect $D$
and there are two possibilities: either $\g$ is a nontrivial curve in
$T$ or $\g$, together with $C$, bounds an annulus in $T\setminus D$.
In the latter case, we may use this annulus to surger $V$ and obtain
an essential annulus in $Y$. Since $Y$ does not admit essential
annuli, $\g$ is nontrivial
in $T$. Then, there is a component of $V\setminus T$ that is an
essential annulus in either
$M_1\setminus L_1$ or in $M_2\setminus L_2$, which
proves that $V\cap T = \es$.

Since $T$ separates $M$,
there exists $i\in\{1,2\}$ such that $V$ is a torus in
the hyperbolic manifold $M_i\setminus L_i$.
For convenience, we will assume that $i=1$.
Note that $V$ cannot be boundary parallel in $M_1\setminus L_1$,
since this either contradicts its essentiality in
$Y$ or the fact that it is not isotopic to $T$. Hence, it must
be the case that $V$ is compressible in $M_1\setminus L_1$.
Let $E\subset M_1\setminus L_1$ be a compression disk for $V$. Then,
since $V$ is incompressible in $Y$, the geodesic $\a_1$ must intersect
$E$. Now, the same arguments used in the proof of Claim~\ref{clnotori2}
for the case when $T$ was an essential torus in
$Y$, disjoint from $\cB$ and compressible in $X$
apply to obtain that $V$ is compressible in $Y$, a contradiction
that proves Theorem~\ref{thmoperation} when $M$ is orientable.

Next, we sketch the arguments that prove
Theorem~\ref{thmoperation} when $M$ is nonorientable,
using the notation already introduced.
There are two cases to consider.
If $T$ is a torus in $M$
and $\Pi\colon \wh{M}\to M$ is the oriented two-sheeted
covering map, then $T$ lifts to a separating
torus $\wh{T}$ in $\wh{M}$ and the orientable case
can be applied directly. On the other hand, if $T$ is a Klein
bottle in $M$, then $T$ does not lift, but
$\wh{T} = \Pi^{-1}(T)$ is again a separating torus in $\wh{M}$.
Now, there are two balls $\cB_1,\,\cB_2$ which are
the connected components of $\Pi^{-1}(\cB)$ and the proof of
the Switch Move Theorem when $M$ is nonorientable can
be repeated
to prove Theorem~\ref{thmoperation}
when $M$ is nonorientable.
\end{proof}

\appendix
\section*{Appendix: Rational tangle case for the chain move}\label{rationaltangles}

In this section we consider the special case relevant to the chain move
when the underlying manifold $M$ is the 3-sphere $S^3$ and the tangle
outside of the ball $\cB$ is a rational tangle. We represent rational
tangles by a sequence of integers, all nonpositive or all nonnegative,
as in Chapter 2 of~\cite{Adbook}.

\begin{proof}[Proof of Lemma~\ref{rational}]
We first untwist the $k$ half-twists of $C_2$ in $\cB$ and thereby
twist $-k$ half-twists in $R$ to move this integer tangle into $R$,
which will remain a rational tangle. Let $\cB' = S^3\setminus \cB$ be
the ball containing the rational tangle $R$ to the outside of $\cB$.
We let $C_3$ be the component of $L$ containing $\gamma_1$ and $C_4$ be
the component containing $\gamma_2$, keeping in mind that it may be the
case that $C_3 = C_4$. 
Since $R$ is a rational tangle, there are no other components in
$L$. Our goal will be to show that, after this twisting,
$S^3\setminus L$ is hyperbolic unless $R$ is one of the four tangles
depicted in Figure~\ref{rationalcounterexamples}.

\begin{figure}[htpb]
\begin{center}
\includegraphics[width=1.0\textwidth]{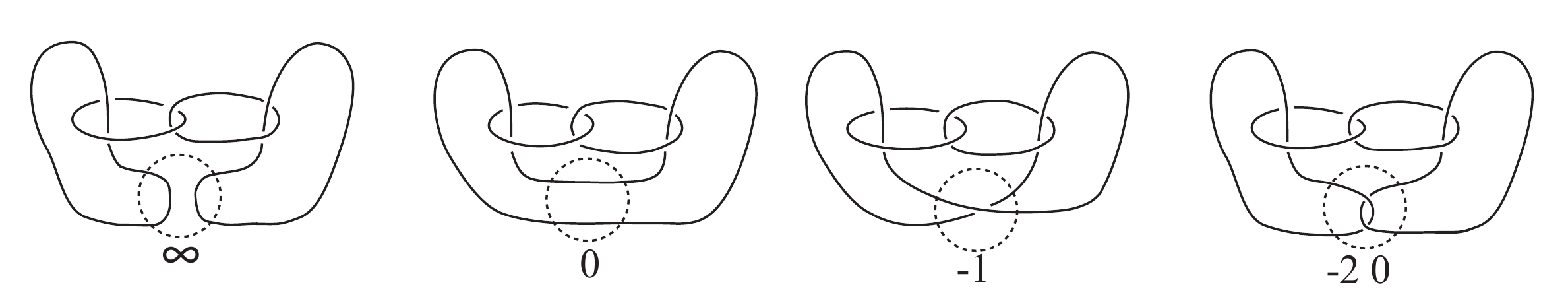}
\caption{Non-hyperbolic link complements.}
\label{rationalcounterexamples}
\end{center}
\end{figure}

We first show that there are no essential spheres in $S^3\setminus L$.
Let us assume that $E$ is an essential sphere in $S^3\setminus L$. Then,
$E$ separates $S^3$ into two balls, each of which contains at least one
component of $L$. Since $C_1$ is linked with $C_2$ and also with $C_3$,
these three components must be on the same side $V$ of $E$. But $C_2$ is
also linked with $C_4$ so $C_4\subset V$, contradicting the need for components
to lie on both sides of $E$.

Next, we prove that there are no essential disks. If there were such a
disk $D$ with boundary in a component $C$ of $L$, then the boundary
of $N(D \cap C)$ is a sphere that separates $C$ from the other components
of $L$, contradicting that there are no essential spheres.

We now show that the existence of an essential annulus in $S^3\setminus L$
gives that $L$ is one of the four exceptions in Figure~\ref{rationalcounterexamples}.
Suppose such an annulus $A$ exists. If the boundary curves of $A$ lie on distinct
components $C_i,\,C_j$ of $L$, then we may take $A' = \partial N(A \cup \partial N(C_j))$,
which is an essential annulus in $S^3\setminus L$ with
boundary curves in $\partial N (C_i)$. Hence, it suffices to show that there
are no essential annuli that have both their boundary components on the same $\partial N(C_i)$.

Let $D_1$ and $D_2$ be the twice-punctured disks bounded by $C_1$ and $C_2$
respectively and let $\cQ$ be the four-punctured sphere 
$\partial \cB\setminus L$.
We assume that $A$ is an essential annulus with both boundary curves on the
same component $\partial N(L)$ with the minimal number $n\geq0$ of
intersections with $D_1$, $D_2$ and $\cQ$ among all such annuli.

We first prove that $A$ can be chosen so that $A\cap (D_1\cup D_2)$ does not
contain any simple closed curve. Let us assume that $\a$ is a simple closed
curve in $A\cap D_i$ and that $\a$ encloses only one puncture of $D_i$,
which comes from a component $C_j$ of $L$. In this case, after possibly
replacing $\a$ by an innermost curve, we could surger $A$ to obtain two
annuli $A_1,\,A_2$, at least one of which, is essential and with
fewer intersection components with $D_1\cup D_2\cup \cQ$. Hence, the minimality
property defining $A$ implies that the boundary curves of $A_1$ lie on distinct components
of $\partial N(L)$. In particular, $A_2$ cannot be boundary parallel,
from where it follows that $A_2$ is also essential. 
Hence, the roles of $A_1$ and $A_2$ are identical and at least
one of them, say $A_1$, satisfies that the number
of intersection curves with $D_1\cup D_2\cup \cQ$ is less than $n/2$.
Let $A' = \partial N(A_1\cup \partial N(C_j))$. Then, $A'$ is an
essential annulus in $S^3\setminus L$ with both boundary curves in the same
component of $\partial N(L)$. However, the number of intersection curves
in $A'\cap (D_1\cup D_2\cup \cQ)$ is less than $n$, a contradiction.
On the other hand, if $\a$ encircles both punctures of $D_i$ and it is
outermost, we could again surger $A$ to obtain two annuli with longitudinal
boundaries, at least one of which would be essential, and the same argument
as before applies to give a contradiction.

We next prove that any intersection arc $\alpha$ in $A \cap D_i$ is not
boundary-parallel on either $A$ or $D_i$. If it were boundary-parallel
on both, we could form a disk $D$ from the disk $\a$ cuts from $A$ and
the disk $\a$ cuts from $D_i$. Then the boundary of $D$ must be a trivial
curve in $\partial N(C_i)$, so we can add to it a disk from $\partial N(C_i)$
to obtain a sphere that must bound a ball. Hence we can isotope $A$ through
this ball to remove the intersection arc $\a$, a contradiction to the
minimality of intersection arcs.

If $\alpha$ is boundary-parallel on $D_i$ but not on $A$, then $A$ is
boundary-compressible, a contradiction. If $\alpha$ is boundary-parallel
on $A$ but not on $D_i$, then $\alpha$ splits the punctures on $D_i$,
since both boundary components of $A$ lie on the same component of $\partial N(L)$.
Let $D$ be the disk $\alpha$ cuts from $A$. Then, $\partial D$ has nonzero
linking number with at least one of the components that puncture $D_i$,
a contradiction that proves that $\a$ is not boundary-parallel on either $A$ or $D_i$.

We next treat the case where the boundary components of $A$ are $(p,q)$-curves
on $\partial N(C_1)$. Note that by symmetry, the roles of $C_1$ and $C_2$
can be reversed. If $\abs{p}\geq1$, there are $\abs{p}$ arcs in $A\cap D_1$,
each separating the two punctures of $D_1$ and joining one boundary component
of $A$ to the other. In particular, all such arcs are parallel on $D_1$ and also on $A$.

Next, consider $A$ in $V = S^3 \setminus N(C_1)$, which is a solid torus.
Since $\partial A$ is nontrivial on $\partial V$ and $A$ is properly embedded
on $V$, then $A$ must be boundary-parallel on $V$ and define a solid torus
region $V'\subset V$. In particular, since we have minimized the intersections
of $A$ with $D_1$, which is a meridianal disk for $V$, then $V'\cap D_1$
has $\abs{p}$ components, each of which is a disk with boundary given by
two arcs, one in $\partial D_1$ and the other on $A$. In particular, the fact
that all such arcs are parallel on $D_1$ implies that $\abs{p}\leq 2$.

We now treat the case when $\abs{p} = 2$. In this case, any closed curve
in $V'$ has even linking number with $C_1$, which implies that both $C_2$
and $C_3$ lie outside of $V'$. But, since $A$ is essential in $S^3\setminus L$,
$V'$ must contain at least one of the components of $L\setminus C_1$,
hence $C_4$ is distinct from $C_3$ and $C_4\subset V'$. But then, the fact
that $C_2$ links $C_4$ and $C_4$ is separable from $C_1$ implies
that $C_2 \subset V'$, a contradiction.

Next assume that $|p| = 1$, and we will show that $q = 0$ or $\abs{q} = 1$.
Arguing by contradiction, suppose $\abs{q} > 1$. As before, at least one
component of $L\setminus C_1$ must be inside $V'$ to prevent $A$ from being
boundary-parallel in $S^3\setminus L$.
First assume that $C_2$ is in $V'$ and we will show that this implies that
$L\setminus C_1\subset V'$. Since the linking number of $C_1$ with $C_2$ is one,
$C_2$ must intersect every meridianal disk of $V'$. But then, again by linking
number, the winding number of $C_2$ in $V'$ must be 1. Suppose $C_3 \not\subset V'$.
Since $C_3$ links $C_1$ once and $V'$ is a solid torus glued to $N(C_1)$
along a $(1,q)$-curve, $C_3$ links $C_2$ $q$ times, a contradiction that
shows that $C_3$ is also in $V'$. Suppose $C_4\neq C_3$ and $C_4 \not\subset V'$.
Then, since $C_4$ does not link $C_1$, once again one can prove that it cannot
link $C_2$, a contradiction. Hence, $L\setminus C_1 \subset V'$. An analogous
argument also shows that starting from any component of $L\setminus C_1$ inside $V'$,
all of them must be inside $V'$. But then the single arc of intersection
of $A$ in $D_1$ cannot separate the punctures, as both components
corresponding to the punctures are not separated by $A$, a contradiction
that shows that $\abs{q}\leq1$.

If $q = 0$, both boundary components of $A$ are meridians on $\partial N(C_1)$
and $A\cap D_1$ is a single arc that must split the two punctures.
We may consider $A$ as a sphere punctured twice by $C_1$, with $C_3$ to one side
and $C_2$ to the other, and since $C_4$ is linked with $C_2$, and therefore must
be to that same side, it must be the case that $C_3$ and $C_4$ are distinct and
separable. Hence the rational tangle must be the tangle denoted by $\infty$
appearing on the left in Figure~\ref{rationalcounterexamples}.

Now, we consider the case for $\partial A \subset \partial N(C_1)$ when
$|p|= 1$ and $|q| = 1$. Let $V''$ be the closure of the complement
of $V'$ in $V$ and note that, in $V$,
$A$ is parallel to both $V'\cap \partial N(C_1)$ and to $V''\cap \partial N(C_1)$.
Without loss of generality, we will assume that $C_2\subset V'$.
Then,
since $C_1 \cup C_2$ is a Hopf link, it must be that $C_2$ is isotopic to the
core curve of $V'$. In particular, $C_2$ is isotopic, in $V'$, to either component
of $\partial A$ on $\partial N(C_1)$, and $C_2$ must hit every meridianal disk
of $V'$ and have winding number 1. Note that $V''' =V' \cup N(C_1)$ is a solid
torus such that each of  $C_1$ or $C_2$ can be taken to be a core curve.

We next treat the case where $C_3 \neq C_4$. Since $C_4$ links
$C_2$ but not $C_1$, it cannot be in $V''$.
Since $C_3$ links $C_1$ but not $C_2$, it also cannot be in $V''$. 
But then $A$
is isotopic to $\partial N(C_1)$ through $V''$, a contradiction.

Thus, it must be the case that $C_3 = C_4$. Since $C_2\subset V'$, then $C_3\subset V''$,
since otherwise $A$ would be boundary-compressible. Moreover, $C_3$ also must hit
every meridianal disk of $V''$ and have winding number 1 so it links with $C_1$ once
and with $C_2$ once. In particular, because $C_3$ is prime (every rational knot is
a 2-bridge knot, therefore prime), it must be that $C_3$ is isotopic to the core
curve of $V''$ and is in particular, trivial in $S^3$. This forces the rational
tangle $L\setminus \cB$ to be an integer tangle. But for every such tangle other
than 0 and -1 (cases excluded in Figure~\ref{rationalcounterexamples}), the
resultant 3-chain link is alternating and hence hyperbolic by~\cite{mena}.

In the final case, when $p = 0$ and $\abs{q} = 1$, both boundary components of $A$ are
longitudes on $\partial N(C_1)$. Neither $C_2$ nor $C_3$ can be in $V'$ since they
both link $C_1$ once. Hence, it must be the case that $C_3 \neq C_4$, and $C_4 \subset V'$.
Since $C_4$ must prevent $A$ from being boundary-parallel, it must hit every
meridianal disk in $V'$. But since $C_2$ links each of $C_1$ and $C_4$ once,
$C_4$ must have winding number 1 in $V'$. In order that $C_3 \cup C_4$ be a
rational and hence a 2-bridge link, it must be the case that $C_4$ intersects
a meridianal disk of $V'$ exactly once, implying that in fact $C_4$ is parallel
to the core curve of $V'$ and therefore parallel to $C_1$, which yields an
annulus $E$ with $\partial E = C_1\cup C_4$. Therefore, $C_3$ and $C_4$ are
also linked once, and in the same way that $C_3$ is linked with $C_1$. But
then, the fact that $C_3$ is not linked with $C_2$ implies that $C_3$ and $C_2$
are also parallel to each other, hence the rational tangle $L\setminus \cB$ is -2 0,
as in the fourth  case depicted in Figure~\ref{rationalcounterexamples}.
This completes the case when $\partial A \subset \partial N(C_1)$.

\medskip

Our next argument analyzes the situation when both boundary components of $A$
are $(p,q)$-curves on $\partial N(C_3)$. Then, there are two cases to treat
according to whether $C_3 \neq C_4$ or $C_3 = C_4$.

When $C_3 \neq C_4$, then both $C_3$ and $C_4$ are trivial as knots in $S^3$.
Once again, $A$ must be boundary-parallel in $V = S^3 \setminus N(C_3)$
and we let $V'$ denote the solid torus region that $A$ cuts from $V$.
Since $A$ is assumed to be essential, one of the other components $C_1,\,C_2,\,C_4$
of $L$ must be contained in $V'$. Next, we assume $|p| \geq 1$.

If $C_1 \subset V'$, then $C_1$ must hit every meridianal disk of $V'$
since it has linking number 1 with $C_3$. But since this linking number
with $C_3$ is $p k$, where $k$ is the winding number of $C_1$ in $V'$,
it follows that $|p| = 1$ and $k = 1$. Since $C_2$ links $C_1$, and $C_2$
is separable from $C_3$, it must be the case that $C_2 \subset V'$ also.
But $C_2$ does not hit every meridianal disk of $V'$ and hence $C_2$ is in
a ball contained in $V'$. But $C_4$ must link $C_2$, so $C_4 \subset V'$.
But then $L\setminus C_3\subset V'$ and $\abs{p} =1$, so the annulus $A$ is
boundary-compressible through the disk bounded by $C_3$, a contradiction.

The case when $C_2\subset V'$ implies that $C_4\subset V'$, as shown in the
previous paragraph. Hence, it suffices to get a contradiction assuming
$C_4 \subset V'$ and $C_1\not\subset V'$. Assuming we are not in the case of
the rational tangle being $\infty$, $C_4$ must intersect all meridianal disks
of $V'$. Since $C_2$ links $C_4$ but is separable from $C_3$, then $C_2 \subset V'$.
But $C_2$ cannot hit every meridianal disk of $V'$, and hence $C_2$ lives in a ball
in $V'$. Then, because $C_1$ links $C_2$, $C_1$ must also be in $V'$, a contradiction.

Thus it must be the case that $|p|=0$, and $\partial A$ is a pair of longitudes
on $\partial N(C_3)$. Then, because $C_1$ links $C_3$, it follows that $C_1$
cannot be in $V'$. If $C_2 \subset V'$, then, if $C_2$ misses a meridianal disk,
it is contained in a ball in $V'$, making it impossible for $C_1$ to link with it.
Hence, $C_2$ must hit every meridianal disk of $V'$. Since $C_1$ links each of $C_3$
and $C_2$ once, $C_2$ must have winding number 1 in $V'$. In fact,
because $C_1 \cup C_2$ is the Hopf link, $C_2$ must be  isotopic to the core
curve of $V'$, and hence isotopic to $C_3$. In particular, $C_4$ links $C_3$ the
same way it links $C_2$, and therefore the rational tangle is -2 0.

\medskip

Next, assume that $\partial A \subset \partial N(C_3)$ and $C_3 = C_4$.
Then, $C_3$ is a rational knot that is hyperbolic unless $C_3$ is trivial or
is  a 2-braid knot. This follows from the fact all rational knots are alternating
and the work in~\cite{mena}.

If $C_3$ is a nontrivial 2-braid knot, then, up to isotopy, there is a unique
annulus that is essential in $S^3 \setminus C_3$ corresponding to the complement
of $C_3$ in the torus upon which a 2-braid can be pictured to sit. But because both
$C_1$ and $C_2$ are linked once with  $C_3$, they both must puncture this annulus.
Thus any essential annulus in $S^3\setminus N(L)$ must be boundary-parallel in $S^3\setminus N(C_3)$.

If $C_3$ is nontrivial and is not a 2-braid knot, it has hyperbolic complement.
Therefore, in this situation also, since $\partial A$ is nontrivial on $\partial N(C_3)$,
$A$ must be boundary-parallel in $S^3\setminus N(C_3)$.

In either case, one of $C_1$ or $C_2$, say $C_1$, must be contained in the solid torus
$V'$ that $A$ cuts from $S^3 \setminus N(C_3)$.  Because $C_1$ links $C_3$ once, then $C_1$
must hit every meridianal disk of $V'$. But then, it must be the case that $|p| = 1$, and
the winding number of $C_1$ in $V'$ is 1. But if $|q| \geq 1$, then $C_1$ is a satellite
knot of $C_3$, which because $C_3$ is nontrivial, must be nontrivial as well, a contradiction.

So it must be the case that if $C_3$ is not trivial, both boundary components
of $A$ are meridians of $C_3$. Since it is assumed that there are no simple
closed curves of intersection of $A$ with $D_1$ or with $D_2$, then $A$ is disjoint from $D_1\cup D_2$.

But once $A$ does not intersect $D_1 \cup D_2$, we can push $A$ out of $\cB$ entirely.
But all twice-punctured spheres in a rational tangle are boundary-parallel. So $A$
is boundary-parallel in the complement of $\cB$, a contradiction.

Thus, $C_3$ is a trivial knot. Hence, the rational tangle must be represented
as an integer tangle. But this makes $L$ into a 3-chain and as in~\cite{NR}
(or as it also follows from~\cite{mena}), a 3-chain is non-hyperbolic if and
only if it is non-alternating, hence $L\setminus \cB$ is one of the tangles $0$
or $-1$ represented by the two middle figures in Figure~\ref{rationalcounterexamples}.
This finishes the proof that if $S^3\setminus L$ admits an essential annulus, then
$L$ is one of the links shown in Figure~\ref{rationalcounterexamples}.

\medskip

To finish the proof of Lemma~\ref{rational}, suppose $T$ is an essential torus
in $S^3\setminus L$. We may assume that $T$ has been isotoped to minimize the number
of intersection curves $\cQ$, and also
with the disks $D_1$ and $D_2$ bounded by $C_1$ and $C_2$. If $T$ intersects
either of these disks, then in either case it does so in simple closed curves
that either surround a single puncture, or surround both punctures. In the first
case, we can take an innermost such curve on $D _i$ and surger the torus to
obtain an essential annulus, both boundary-components of which become meridians
in the link complement. In the second case, we can take an outermost such curve
and surger $T$ to obtain an essential annulus, both boundary components of which
are longitudes on $\partial N(C_i)$. Thus, in either case $L$ is one of the
exceptions stated by the lemma.

Otherwise, if $T$ does not intersect $D_1 \cup D_2$, we can assume that the
torus is outside the ball $N(D_1 \cup D_2)$. Since $\partial N(D_1 \cup D_2)$
is a 4-punctured sphere isotopic to the 4-punctured sphere $\cQ$, the torus can
be pushed out of $\cB$ entirely. But then the incompressible torus $T$ lies
in $\cB'$ in the complement of the two unknotted arcs that make up the rational tangle.
If $T$ cuts a solid torus from $\cB'$, it will compress in that solid torus.
So it must be that the torus is a cube-with-knotted-hole. But then to prevent
compression in the knotted hole, the two arcs must form a knot that passes
through the hole, contradicting the fact this is a rational tangle, each arc of which is trivial.
\end{proof}

\bibliographystyle{plain}
\bibliography{bill}

\noindent DEPARTMENT OF MATHEMATICS, WILLIAMS COLLEGE, WILLIAMSTOWN, MA 01267, USA

\noindent{\it E-mail address:} cadams@williams.edu

\medskip

\noindent MATHEMATICS DEPARTMENT, UNIVERSITY OF MASSACHUSETTS, AMHERST, MA 01003, USA

\noindent {\it E-mail address:} profmeeks@gmail.com

\medskip

\noindent DEPARTMENTO DE MATEM\'ATICA PURA E APLICADA, UNINVERSIDADE FEDERAL DO RIO GRANDE
DO SUL, BRAZIL

\noindent {\it E-mail address:}alvaro.ramos@ufrgs.br 

\end{document}